\newsavebox{\@brx}
\newcommand{\llangle}[1][]{\savebox{\@brx}{\(\m@th{#1\langle}\)}%
  \mathopen{\copy\@brx\kern-0.5\wd\@brx\usebox{\@brx}}}
\newcommand{\rrangle}[1][]{\savebox{\@brx}{\(\m@th{#1\rangle}\)}%
  \mathclose{\copy\@brx\kern-0.5\wd\@brx\usebox{\@brx}}}
\newtheorem{theorem}{Theorem}[section]
\newtheorem{corollary}[theorem]{Corollary}
\newtheorem{lemma}[theorem]{Lemma}
\newtheorem{proposition}[theorem]{Proposition}
\theoremstyle{definition}
\newtheorem{problem}[theorem]{Problem}
\newtheorem{remark}[theorem]{Remark}
\numberwithin{equation}{subsection}
\newtheorem*{ack}{Acknowledgement}
\newcommand{\Aut}{\operatorname{Aut}}
\newcommand{\im}{\operatorname{im}}
\newcommand{\Oo}{\operatorname{O}}
\newcommand{\GL}{\operatorname{GL}}
\newcommand{\SL}{\operatorname{SL}}
\newcommand{\id}{\mathrm{id}}
\begin{document}

\title[Congruence subgroups and crystallographic quotients]{Congruence subgroups and crystallographic quotients of small Coxeter groups}

\author{Pravin Kumar}
\email{pravin444enaj@gmail.com}
\address{Department of Mathematical Sciences, Indian Institute of Science Education and Research (IISER) Mohali, Sector 81,  S. A. S. Nagar, P. O. Manauli, Punjab 140306, India.}

\author{Tushar Kanta Naik}
\email{tushar@niser.ac.in}
\address{School of Mathematical Sciences, National Institute of Science Education and Research, Bhubaneswar, An OCC of Homi Bhabha National Institute, P. O. Jatni, Khurda 752050, Odisha, India.}
\author{Mahender Singh}
\email{mahender@iisermohali.ac.in}
\address{Department of Mathematical Sciences, Indian Institute of Science Education and Research (IISER) Mohali, Sector 81,  S. A. S. Nagar, P. O. Manauli, Punjab 140306, India.}

\subjclass[2020]{Primary 20F55, 20H15; Secondary 20F36}
\keywords{Bieberbach group, braid group, congruence subgroup property, crystallographic group, Tits representation, triplet group, twin group}

\begin{abstract}
Small Coxeter groups are precisely the ones for which the Tits representation is integral, which makes the study of their congruence subgroups relevant.  The symmetric group $S_n$ has three natural extensions, namely, the braid group $B_n$, the twin group $T_n$ and the triplet group $L_n$. The latter two groups are small Coxeter groups, and play the role of braid groups under the Alexander-Markov correspondence for  appropriate knot theories, with their pure subgroups admitting suitable hyperplane arrangements as Eilenberg-MacLane spaces.  In this paper, we prove that the congruence subgroup property fails for infinite small Coxeter groups which are not virtually abelian. As an application, we deduce that the congruence subgroup property fails for both $T_n$ and $L_n$ when $n \ge 4$. We also determine subquotients of principal congruence subgroups of $T_n$, and identify the pure twin group $PT_n$ and the pure triplet group $PL_n$ with suitable principal congruence subgroups. Further, we investigate crystallographic quotients of these two families of small Coxeter groups, and prove that $T_n /PT_n^{'}$, $T_n/T_n^{''}$ and $L_n /PL_n^{'}$ are crystallographic groups. We also determine crystallographic dimensions of these groups and identify the holonomy representation of $T_n/T_n^{''}$.

\end{abstract}

\maketitle

\section{Introduction}
A group $W$  is called a {\it Coxeter group} if it admits a  presentation of the form $\big \langle S \mid R \big \rangle$, where $S= \{ w_i \mid i \in \Pi\}$ is the set of generators and 
\begin{equation}\label{Coxeter presentation}
R  = \big \{ (w_iw_j)^{m_{i,j}}~ \mid m_{i,j}\in \mathbb{N} \cup \{\infty\},~m_{i,j}=m_{j,i}, ~m_{i,j} = 1 ~\textrm{if and only if}~ i =j \big\}
\end{equation}
is the set of defining relations. The pair $(W, S)$ is called a  {\it Coxeter system}. We refer the $m_{i,j}$'s as {\it exponents} of the Coxeter system $(W, S)$. The cardinality of the set $S$ is called the {\it rank} of the Coxeter system, and we shall consider only finite rank Coxeter systems.
\par

For convenience of terminology, we say that a group is a {\it small Coxeter group} if it admits a Coxeter system such that each exponent $m_{i, j}$ is either $\infty$ or less than or equal to 3. For instance, symmetric groups, right-angled Coxeter groups and universal Coxeter groups are small Coxeter groups. The main objects of this paper are two classes of small Coxeter groups that arise as natural extensions of symmetric groups. For  $n \ge 2$, the  symmetric group $S_n$ has a Coxeter presentation with generators $\{\tau_1, \ldots, \tau_{n-1} \} $ and the following defining relations:
\begin{enumerate}
\item $\tau_i^2 = 1$ for  $1\leq i \leq n-1$.
\item $\tau_i\tau_{i+1}\tau_i=\tau_{i+1}\tau_i\tau_{i+1}$ for $1\leq i \leq n-2$.
\item $\tau_i\tau_j=\tau_j\tau_i$ for $\mid i - j\mid \geq 2$.
\end{enumerate}
\par

By omitting all relations of type (1), (2) or (3) one at a time from the preceding presentation, we obtain presentations of the braid group $B_n$, the twin group $T_n$ and the triplet group  $L_n$ as follows:
\begin{small}
\begin{align}\label{definition-Ln}
\nonumber B_n &= \big{\langle} \sigma_1, \sigma_2, \ldots, \sigma_{n-1} ~\mid~ \sigma_j\sigma_k=\sigma_k\sigma_j,~~ \sigma_i\sigma_{i+1}\sigma_i=\sigma_{i+1}\sigma_i\sigma_{i+1}~~\text{for all} ~~\mid j - k\mid \geq 2~~\text{and} ~~1\leq i \leq n-2 \big{\rangle}.&\\
\nonumber T_n &= \big{\langle} s_1, s_2, \ldots, s_{n-1} ~\mid~ s_k^2 = 1,~~s_is_j=s_js_i~~ \text{for all}~~1\leq k \leq n-1~~\text{and} ~~  \mid i - j\mid \geq 2 \big{\rangle}.&\\
\nonumber L_n &= \big{\langle} y_1, y_2, \ldots, y_{n-1} ~\mid~ y_j^2 = 1,~~y_iy_{i+1}y_i=y_{i+1}y_i y_{i+1} ~\text{for all} ~~1\leq j \leq n-1~~\text{and} ~~1\leq i \leq n-2 \big{\rangle}.&
\end{align}
\end{small}

The terms twin and triplet groups were coined by Khovanov \cite{MR1386845} in his study of $K(\pi,1)$ subspace arrangements. It has been proved that the pure subgroups $PT_n$ and $PL_n$, which are kernels of natural surjections $T_n \to S_n$ and $L_n \to S_n$ respectively, have suitable hyperplane arrangements as their Eilenberg--MacLane spaces. Furthermore, twin and triplet groups have topological interpretations and can be thought of as real forms of braid groups. These groups play the role of braid groups under the Alexander-Markov correspondence for some appropriate knot theories.  It turns out that twin groups are related to doodles on the 2-sphere, which were introduced by Fenn and Taylor \cite{MR0547452} as finite collections of simple closed curves on the 2-sphere without triple or higher intersections. Allowing self-intersections of curves, Khovanov \cite{MR1370644} extended the idea to finite collections of closed curves without triple or higher intersections on a closed oriented surface. Also, Khovanov \cite{MR1386845} associated a group to a doodle, which can be thought of as an analogue of the fundamental group of the link complement of a link in the 3-space. Similarly, triplet groups are related to topological objects called noodles. Fixing a codimension one foliation (with singular points) on the 2-sphere, a noodle is a collection of closed curves on the 2-sphere such that no two intersection points belong to the same leaf of the foliation, there are no quadruple intersections and no intersection point can occupy a singular point of the foliation. 
\par

For $n \ge 2$, an element of the twin group $T_n$ can be identified with the homotopy class of a configuration of $n$ arcs in the infinite strip $\mathbb{R} \times  [0,1]$ connecting fixed $n$ marked points on each of the parallel boundary lines such that each arc is monotonic and no three arcs have a point in common. Taking the one-point compactification of the plane, one can define the closure of a twin on a $2$-sphere analogous to the closure of a geometric braid in the 3-space. While Khovanov proved that every oriented doodle on a $2$-sphere is the closure of a twin, an analogue of the Markov Theorem in this setting has been proved by Gotin \cite{Gotin}, though the idea has been implicit in \cite{Khovanov1990}. Automorphisms, conjugacy classes and centralisers of involutions in twin groups have been explored in \cite{MR4145210, MR4192499}. A similar study for a family of odd Coxeter groups that includes triplet groups has been carried out in \cite{MR4270786}. It has been proved in \cite{MR4027588} that pure twin groups $PT_3$ and $PT_4$ are free groups of ranks 1 and 7, respectively. Gonz\'alez, Le\'on-Medina and Roque \cite{MR4170471} have shown that $PT_5$ is a free group of rank $31$. Further, Mostovoy and Roque-M\'arquez \cite{MR4079623} have proved that $PT_6 \cong F_{71} *_{20} \big(\mathbb{Z} \oplus \mathbb{Z} \big)$. Recently, a minimal presentation of $PT_n$ for all $n$ has been announced by Mostovoy \cite{Mostovoy}, though the precise structure of this group still remains mysterious.  In a recent preprint \cite{Farley2021}, Farley has shown that $PT_n$ is always a diagram group. It is worth noting that (pure) twin groups have been used by physicists in the study of three-body interactions and topological exchange statistics in one dimension  \cite{MR4035955, MR4440997}. Concerning triplet groups, it has been shown in \cite{MR4270786} that the pure triplet group $PL_n$ is a non-abelian free group of finite rank for $n \ge 4$.
\par

It is a well-known result of Tits that for a Coxeter system $(W, S)$ of rank $n$, the group $W$ admits a canonical faithful representation $W \to \GL(n, \mathbb{R})$.  The paper begins with the observation that the Tits representation of a small Coxeter group is integral. This makes the study of congruence subgroups of such groups relevant, which we pursue in sections \ref{sec2} and \ref{sec3}. We prove that if $W$ is a right-angled Coxeter group, then $W[4] \cong W^{'}$ (Proposition \ref{racg commutator}). In a general setting, we prove that an infinite small Coxeter group which is not virtually abelian does not have the congruence subgroup property (Theorem \ref{virtual abelian and csp}). As an application, we deduce that the congruence subgroup property fails for both $T_n$ and $L_n$ when $n\geq 4$ (Propositions \ref{csp fails for T_n} and \ref{csp fails for L_n}). Further, we investigate subquotients of  principal congruence subgroups of $T_n$, and prove that if $3 \nmid m$, then $T_n[m]/T_n[3m]$ is isomorphic to the alternating group $A_n$ (Theorem \ref{quotient twin cong subgroup}). We also identify pure twin and pure triplet groups with principal congruence subgroups. More precisely, we prove that $T_n[3]=T_n[6] =PT_n$ (Proposition \ref{tn3=ptn}) and $L_n[2]=PL_n$ (Proposition \ref{ln2=pln}). It is known that crystallographic groups play an important role in the study of groups of isometries of Euclidean spaces. In Section \ref{sec4}, we determine some natural crystallographic quotients of twin groups and triplet groups. We prove that $T_n /PT_n^{'}$ is a crystallographic group of dimension $2^{n-3}(n^2-5n+8)-1$ for $n\geq 4$ (Theorem \ref{tn mod ptn cryst}) and $T_n/T_n^{''}$ is a crystallographic group of dimension $2n-5$ for $n\geq 3$ (Theorem \ref{tn mod tn' cryst}). Finally, concerning triplet groups, we prove that $L_n /PL_n^{'}$ is a crystallographic group of dimension $1+ n!\Large(\frac{2n-7}{6}\Large)$ (Theorem \ref{ln mod pln cryst}). 
\medskip

\section{Tits representation of Coxeter groups}\label{sec2}
Let $W$ be a Coxeter group given by a Coxeter presentation $W= \langle S \mid R \rangle $, where $S= \{ w_i \mid i \in \Pi\}$. Let $V$ be the real vector space spanned by the set $\{e_i \mid i \in \Pi \}$. Define a symmetric bilinear form $B$ on $V$ by
$$
B\left(e_{i}, e_{j}\right)= \begin{cases}-\cos \left(\frac{\pi}{m_{i, j}}\right) & \text { if } m_{i, j} \neq \infty, \\ -1 & \text { if } m_{i, j}=\infty. \end{cases}
$$
Then, for each $i \in \Pi$, the linear map $\sigma_{i}: V \rightarrow V$ given by 
\begin{equation}\label{Tits representation formula}
\sigma_{i}(v)=v-2 B\left(e_{i}, v\right) e_{i},
\end{equation}
 defines an automorphism of $V$. The following result is folklore \cite[Chapter V, Section 4]{Bourbaki}.

\begin{theorem}
The map $\rho: W \rightarrow \GL(V)$ defined through $\rho\left(w_{i}\right)=\sigma_{i}$ is a faithful representation of $W$.
\end{theorem}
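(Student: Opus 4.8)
The plan is to establish the classical Tits theorem by exhibiting a faithful action of $W$ on $V$ (or on a suitable cone inside the dual space $V^*$) via the formula \eqref{Tits representation formula}. First I would check that each $\sigma_i$ is a well-defined element of $\GL(V)$: a direct computation shows $\sigma_i(e_i) = -e_i$, that $\sigma_i$ fixes the hyperplane $\{v : B(e_i,v)=0\}$ pointwise, and hence $\sigma_i^2 = \id_V$, so $\sigma_i \in \GL(V)$ and moreover $\sigma_i$ preserves the bilinear form $B$. Next I would verify that the $\sigma_i$ satisfy the Coxeter relations: one must show that $\sigma_i \sigma_j$ has order exactly $m_{i,j}$. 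Restricting to the plane spanned by $e_i, e_j$, the Gram matrix of $B$ has the form $\begin{pmatrix} 1 & -\cos(\pi/m_{i,j}) \\ -\cos(\pi/m_{i,j}) & 1 \end{pmatrix}$, and one computes that $\sigma_i\sigma_j$ acts there as a rotation by $2\pi/m_{i,j}$ (when $m_{i,j} < \infty$) or as a parabolic/unipotent transformation of infinite order (when $m_{i,j} = \infty$); on a $B$-orthogonal complement $\sigma_i\sigma_j$ acts trivially. This gives that $\rho$ is a well-defined homomorphism $W \to \GL(V)$.

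The substantive part, and the main obstacle, is \emph{faithfulness}. The standard argument proceeds by a geometric/combinatorial analysis of the contragredient action of $W$ on $V^*$. One introduces the fundamental chamber $C = \{ f \in V^* : f(e_i) > 0 \text{ for all } i \in \Pi\}$ and shows, by induction on word length in $S$, that if $w = w_{i_1} \cdots w_{i_k}$ is a reduced word then $w \cdot C$ and $C$ lie on opposite sides of the wall $\{f : f(e_{i_1}) = 0\}$; in particular $w \cdot C \ne C$ whenever $w \ne 1$. The key technical lemma here is the \emph{exchange/deletion} behaviour of reduced words together with the fact that the half-spaces $\{f(e_i) > 0\}$ are permuted correctly — concretely, that if $\ell(w_i w) > \ell(w)$ then $w^{-1}\cdot e_i$ (in the appropriate action on $V$) is a nonnegative combination of the $e_j$, while if $\ell(w_i w) < \ell(w)$ it is nonpositive. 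Granting this, $w \cdot C = C$ forces $w = 1$, so the contragredient representation, and hence $\rho$ itself, is faithful.

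I would then simply cite Bourbaki \cite[Chapter V, Section 4]{Bourbaki} for the full combinatorial details of the chamber-geometry induction rather than reproduce them, since the result is folklore and the argument is lengthy. In the write-up I expect the proof to be essentially a pointer to the literature, with at most the verification that the $\sigma_i$ are involutions preserving $B$ and that $\sigma_i\sigma_j$ has the correct order spelled out, as these are the only computational inputs specific to the formula \eqref{Tits representation formula} as stated. The genuinely hard step — the one that cannot be reduced to a one-line computation — is the chamber-separation induction establishing injectivity, and that is exactly the portion I would defer to the cited reference.
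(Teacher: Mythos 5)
Your outline is correct and is essentially the same as what the paper does: the paper gives no proof at all, simply labelling the result as folklore and citing Bourbaki, Chapter V, Section 4, which is exactly the reference (and the standard chamber-geometry argument) you describe and defer to. No gap to report.
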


The  representation $\rho: W \rightarrow \GL(V)$ is called the Tits representation of $W$. The following is an immediate observation for small Coxeter groups.

\begin{lemma}\label{tits rep integral small}
The Tits representation of a Coxeter group is integral if and only if it is a small Coxeter group.
\end{lemma}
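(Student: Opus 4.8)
The plan is to read off the matrices of the generators $\sigma_i$ directly from the defining formula \eqref{Tits representation formula} and then reduce the statement to an elementary fact about the numbers $2\cos(\pi/m)$. Throughout, I understand ``integral'' with respect to the basis $\{e_i \mid i \in \Pi\}$ of $V$, so the assertion to be proved is: the matrices of $\rho(W)$ in this basis lie in $\GL(|\Pi|, \mathbb{Z})$ if and only if $m_{i,j} \in \{1, 2, 3, \infty\}$ for all $i, j \in \Pi$, which is exactly the condition that $(W, S)$ be a small Coxeter system.

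First I would note that, since $\sigma_i = \rho(w_i)$ and the $w_i$ generate $W$, the representation $\rho$ is integral if and only if every generator matrix $\sigma_i$ has entries in $\mathbb{Z}$; here one uses $\sigma_i^2 = \id$, so that products of the $\sigma_i$ already exhaust $\rho(W)$ and no separate check on inverses is needed. Next, evaluating \eqref{Tits representation formula} on the basis gives $\sigma_i(e_j) = e_j - 2 B(e_i, e_j) e_i$, and since $m_{i,i} = 1$ we have $B(e_i, e_i) = -\cos \pi = 1$. Hence the matrix of $\sigma_i$ agrees with the identity matrix except that its $(i,i)$-entry is $-1$ and its $(i,j)$-entry, for $j \neq i$, equals $-2 B(e_i, e_j)$. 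Therefore $\sigma_i$ is integral if and only if $2 B(e_i, e_j) \in \mathbb{Z}$ for every $j \neq i$, and $\rho$ is integral if and only if $2 B(e_i, e_j) \in \mathbb{Z}$ for all $i \neq j$.

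It remains to decide, for $i \neq j$, when $2 B(e_i, e_j) \in \mathbb{Z}$. If $m_{i,j} = \infty$ then $2 B(e_i, e_j) = -2 \in \mathbb{Z}$, and if $m_{i,j} \in \{2, 3\}$ then $2 B(e_i, e_j) = -2\cos(\pi/m_{i,j}) \in \{0, -1\} \subseteq \mathbb{Z}$; so the ``if'' direction is immediate once the ``only if'' direction is known. For the latter, the key point is that $2\cos(\pi/m) \notin \mathbb{Z}$ whenever $m \geq 4$ is an integer: since $\cos$ is decreasing on $[0, \pi]$, one has $\sqrt{2} = 2\cos(\pi/4) \leq 2\cos(\pi/m) < 2$, and the interval $[\sqrt{2}, 2)$ contains no integer. (Alternatively, $2\cos(\pi/m) = \zeta + \zeta^{-1}$ with $\zeta$ a primitive $2m$-th root of unity is an algebraic integer all of whose conjugates have absolute value strictly less than $2$, hence it can be a rational integer only for $m \leq 3$.) Consequently $2 B(e_i, e_j) \in \mathbb{Z}$ forces $m_{i,j} \leq 3$ or $m_{i,j} = \infty$, and combining the two directions shows that $\rho$ is integral precisely when every exponent $m_{i,j}$ equals $\infty$ or is at most $3$.

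I do not expect a genuine obstacle here; the statement is essentially a direct computation. The only two points requiring a little care are the interpretation of ``integral'' (we fix the basis $\{e_i\}$: after an arbitrary change of basis even certain non-small Coxeter groups, for instance the dihedral group of order $8$, have a conjugate-to-integral Tits representation, so the fixed basis is essential to the ``only if'' direction), and the elementary estimate separating $2\cos(\pi/m)$ from $\mathbb{Z}$ for $m \geq 4$.
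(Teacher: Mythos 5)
Your proof is correct and follows essentially the same route as the paper: both read off the generator matrices from \eqref{Tits representation formula}, observe that integrality reduces to $2B(e_i,e_j)\in\mathbb{Z}$ for $i\neq j$, and note that $2\cos(\pi/m)$ is an integer exactly for $m\in\{1,2,3,\infty\}$. You merely supply details the paper leaves implicit (the reduction to generators via $\sigma_i^2=\id$ and the estimate $\sqrt{2}\leq 2\cos(\pi/m)<2$ for $4\leq m<\infty$), which is a welcome but not substantively different elaboration.
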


\begin{proof}
Let $W= \langle S \mid R \rangle $ be a Coxeter group and $\rho: W \rightarrow \GL(V)$ given by $\rho(w_i)=\sigma_i$ be its Tits representation. Clearly, the entries of the matrix of $\sigma_i$ lie in $\{0, 1, -1, -2B(e_i,e_j) \}$. It follows that $-2B(e_i,e_j)= 2 \cos(\frac{\pi}{m_{i,j}})$ is an integer if and only if  $m_{i,j}= 1,2,3$ or ${\infty}$.
\end{proof}

\subsection*{Tits representation of twin groups} For an integer $n \ge 2$, the twin group $T_n$ is a Coxeter group with the presentation
$$T_n = \big{\langle} s_1, s_2, \ldots, s_{n-1} ~\mid~ s_k^2 = 1,~~s_is_j=s_js_i~~ \text{for all}~~1\leq k \leq n-1~~\text{and} ~~  \mid i - j\mid \geq 2 \big{\rangle}.$$
It follows that $T_2 \cong \mathbb{Z}_2$ and $T_3 \cong \mathbb{Z}_2 *\mathbb{Z}_2$, the infinite dihedral group. Notice that twin groups form a special class of right-angled Coxeter groups. Let $I_m$ denote the $m\times m$ identity matrix. Then the Tits representation $\rho: T_n \to \GL(n-1, \mathbb{Z})$ is given by $\rho(s_i)= X_i$, where  
\begin{equation*}
X_i = 
\begin{cases}
\left(\begin{array}{cc|c}
-1 & 2 & 0 \\
0 & 1 & \\
\hline 0 & 0 & I_{n-3}
\end{array}\right) & \text{if}\; i= 1,\\
\\
\left(\begin{array}{c|ccc|c}
I_{i-2} & 0 & 0 & 0 & 0 \\ \hline & 1 & 0 & 0 &\\  0 & 2 & -1 & 2 & 0 \\ & 0 & 0 & 1 & \\ \hline 0 & 0 & 0 & 0 & I_{n-(i+2)}\end{array}\right) & \text{if}\;  2\leq i \leq n-2,\\
\\
\left(\begin{array}{c|c}
I_{n-3} & 0 \\
\hline
0 & \begin{array}{cc}
1 & 0 \\
2 & -1
\end{array}
\end{array}\right) & \text{if}\; i= n-1.
\end{cases}
\end{equation*}

\subsection*{Tits representation of triplet groups} For $n \ge 2$, the triplet group $L_n$ is a Coxeter group with the presentation
$$ L_n = \big{\langle} y_1, y_2, \ldots, y_{n-1} ~\mid~ y_j^2 = 1,~~y_iy_{i+1}y_i=y_{i+1}y_i y_{i+1} ~\text{for all} ~~1\leq j \leq n-1~~\text{and} ~~1\leq i \leq n-2 \big{\rangle}.$$
It follows that $L_2 \cong \mathbb{Z}_2$ and $L_3 \cong S_3$. Let $M_l$ stand for the $3\times l$ matrix with zeroes on the first and the third rows, and 2's as its middle row entries. For example, 
$$
M_4= \left(\begin{matrix}
0 & 0 & 0 & 0\\
2 & 2 & 2 & 2\\
0 & 0 & 0 & 0\\
\end{matrix}\right).
$$
Let $N_l$ be the $2\times l$ matrix consisting of the second and the third row of $M_l$. Similarly, let $K_l$ be the $2\times l$ matrix consisting of the first and the second row of $M_l$. Then the Tits representation $\rho: L_n \to \GL(n-1, \mathbb{Z})$ is given by $\rho(y_i)=Y_i$, where  
\begin{equation*}
Y_i = 
\begin{cases}
\left(\begin{array}{cc|c}
-1 & 1 & N_{n-3} \\
0 & 1 & \\
\hline 0 & & I_{n-3}
\end{array}\right) & \text{if}\; i= 1,\\
\\
\left(\begin{array}{c|ccc|c}I_{i-2} &  & 0 &  & 0 \\ \hline & 1 & 0 & 0 &\\ M_{i-2} & 1 & -1 & 1 & M_{n-(i+2)} \\ & 0 & 0 & 1 & \\ \hline 0 &  & 0 &  & I_{n-(i+2)}\end{array}\right) & \text{if}\;  2\leq i \leq n-2,\\
\\
\left(\begin{array}{c|c}
I_{n-3} & 0 \\
\hline
K_{n-3} & \begin{array}{cc}
1 & 0 \\
1 & -1
\end{array}
\end{array}\right) & \text{if}\; i= n-1.
\end{cases}
\end{equation*}
\medskip

\section{Congruence subgroups of small Coxeter groups}\label{sec3}
Given a representation $G \to \mathrm{GL}(n, \mathbb{Z})$ of a group $G$ and an integer $m \ge 2$, one defines the {\it principal congruence subgroup} $G[m]$ of level $m$ as the kernel of the composition $$G \to \GL(n, \mathbb{Z}) \rightarrow \GL(n, \mathbb{Z}_m).$$ Notice that $G[m] \le G[k]$ for each divisor $k$ of $m$. We say that the group $G$ has the {\it congruence subgroup property} if every finite index subgroup of $G$ contains some principal congruence subgroup. A finite index subgroup of $G$ containing some principal congruence subgroup is called a {\it congruence subgroup}.  In the case of braid groups (and mapping class groups), one uses the usual symplectic representation to define principal congruence subgroups. We refer the reader to \cite{quotientbraid, BPS2022, MR4157115, MR3757477, MR3786423} for recent results and a survey of congruence subgroups of braid groups.
\par

In view of Lemma \ref{tits rep integral small}, the Tits representation of a small Coxeter group is integral, and hence it is interesting to explore its (principal) congruence subgroups. Let $W= \langle S \mid R \rangle$ be a small Coxeter group with $|S|=n-1$, where $n \ge 3$. For each  $m \ge 2$, let  $$\rho_m: W \to  \GL(n-1,\mathbb Z_m)$$ be the composition of $\rho: W \to  \GL(n-1,\mathbb Z)$ with the modulo $m$ reduction homomorphism $\GL(n-1,\mathbb Z) \to \GL(n-1,\mathbb Z_m)$. Let $W[m]:=\ker(\rho_m)$ denote the principal congruence subgroup of $W$ of level $m$. We note that the definition depends on our Coxeter system $(W, S)$.
\par

We begin with the following basic observation. 

\begin{lemma}\label{length}
Let $W= \langle S \mid R \rangle$ be a small Coxeter group with $|S| \ge 2$ and $m \ge 3$. Then each element of $W[m]$ is a word of even length in $W$.
\end{lemma}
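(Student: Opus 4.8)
The plan is to use the determinant of the Tits representation as a parity homomorphism. First I would record that for each $i \in \Pi$ the reflection $\sigma_i$ of \eqref{Tits representation formula} fixes the hyperplane $\{v \in V : B(e_i, v) = 0\}$ pointwise and sends $e_i$ to $-e_i$, so that $\det(\sigma_i) = -1$. Consequently the composition $\det \circ \rho \colon W \to \{\pm 1\}$ is a group homomorphism carrying every generator $w_i$ to $-1$; equivalently, for any word $w = w_{i_1} \cdots w_{i_k}$ in $W$ one has $\det(\rho(w)) = (-1)^k$. In particular the parity of the word length of $w$ is a well-defined invariant of the element of $W$ (this is also visible directly, since every relator in \eqref{Coxeter presentation} has even length $2 m_{i,j}$, including $w_i^2$ when $i = j$).

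Next I would bring in the congruence condition. Suppose $w \in W[m] = \ker(\rho_m)$. Then by definition $\rho(w) \equiv I_{n-1} \pmod m$, and taking determinants gives $\det(\rho(w)) \equiv \det(I_{n-1}) = 1 \pmod m$. Combining this with the previous paragraph, we know both that $\det(\rho(w)) \in \{1, -1\}$ and that $\det(\rho(w)) \equiv 1 \pmod m$.

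Finally, since $m \ge 3$, the only element of $\{1, -1\}$ that is congruent to $1$ modulo $m$ is $1$ itself, because $-1 \equiv 1 \pmod m$ would force $m \mid 2$. Hence $\det(\rho(w)) = 1$, which by the parity interpretation of the determinant means precisely that $w$ is a word of even length in $W$, as claimed.

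There is no genuine obstacle here: the only point that deserves a line of care is that "length parity" really is well-defined on $W$, which is ensured either by the determinant homomorphism above or by inspecting the relators. Everything else reduces to the one-line determinant computation for a reflection and the elementary observation that $-1 \not\equiv 1 \pmod m$ once $m \ge 3$.
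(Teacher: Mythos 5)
Your proof is correct and follows essentially the same route as the paper: both arguments use that each Tits reflection has determinant $-1$, so $\det(\rho(w)) = (-1)^{\ell(w)}$, and then note that $-1 \not\equiv 1 \pmod m$ for $m \ge 3$. Your added remarks on why $\det(\sigma_i)=-1$ and why length parity is well defined are fine elaborations of the same idea.
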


\begin{proof}
Notice that $\det(\rho(w_i))=-1$ for all $w_i \in S$. For a reduced word $w=w_{i_1} w_{i_2}\cdots w_{i_k} \in W$, if $\ell(w)=k$ denotes the length of $w$, then $\det(\rho(w))=(-1)^{\ell(w)}$. If $w \in W[m]$, then $\det(\rho_m(w))=1 \mod m$, that is, $(-1)^{\ell(w)}=1 \mod m$. It follows that $\ell(w)$ is an even integer, and hence $w$ is a word of even length.
\end{proof}

Obviously, the converse of the preceding lemma is not true. For example, if we consider $\rho: T_n \to \GL(n-1, \mathbb{Z})$, then 
$$\rho(s_1s_2)=\left(\begin{array}{ccc|c}
3 & -2 & 4 & 0 \\
2 & -1 & 2 & 0 \\
0 & 0 & 1 & 0 \\
\hline
0 & 0 & 0 & I_{n-4}
\end{array}\right)$$
is not identity modulo any $m\geq 3$.

\begin{corollary}\label{prin cong torsion free}
Let $W= \langle S \mid R \rangle$ be a small Coxeter group with $|S| \ge 2$. Then the level $m$ principal congruence subgroup of $W$ is torsion free for each $m\geq 3$. In particular, if $W$ is finite, then  the level $m$ principal congruence subgroup of $W$ is trivial for each $m\geq 3$.
\end{corollary}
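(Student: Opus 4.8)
The plan is to derive the corollary from the classical lemma of Minkowski: for every $m \ge 3$, the congruence kernel $\Gamma(m) := \ker\!\big(\GL(n-1,\mathbb{Z}) \to \GL(n-1,\mathbb{Z}_m)\big)$ is torsion free. Granting this, the statement is immediate. Indeed, the Tits representation $\rho\colon W \to \GL(V)$ is faithful and, by Lemma \ref{tits rep integral small}, integral, so it realizes $W$ as a subgroup of $\GL(n-1,\mathbb{Z})$; under this identification $W[m] = \ker(\rho_m) = W \cap \Gamma(m)$, a subgroup of a torsion-free group, hence torsion free. The ``in particular'' clause then follows because a finite torsion-free group is trivial.

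The content is thus concentrated in Minkowski's lemma, whose short proof I would include for completeness. Suppose $g \in \Gamma(m)$ has finite order and $g \ne I$, and write $g = I + mN$ with $N \in M_{n-1}(\mathbb{Z})$ and $N \ne 0$. Since $g^k = I$ for some $k$, the matrix $g$ is diagonalizable over $\mathbb{C}$ with every eigenvalue a root of unity. Each eigenvalue of $g$ equals $1 + m\nu$ for some eigenvalue $\nu$ of $N$, which is therefore an algebraic integer; every Galois conjugate $\nu'$ of $\nu$ is again an eigenvalue of $N$, and $1 + m\nu'$ is a Galois conjugate of a root of unity and so has absolute value $1$. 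Hence $|\nu'| = |(1 + m\nu') - 1|/m \le 2/m < 1$ for all conjugates $\nu'$ of $\nu$, so the product of the conjugates of $\nu$ --- a rational integer --- has absolute value $< 1$ and therefore vanishes, giving $\nu = 0$. Thus $N$ is nilpotent, every eigenvalue of $g$ is $1$, and since $g$ is diagonalizable we conclude $g = I$, a contradiction.

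The argument is routine; the one point to handle carefully is to run the Galois-conjugate estimate on an eigenvalue $\nu$ of the integral matrix $N$ --- which is automatically an algebraic integer --- rather than on $(\mu - 1)/m$ for an eigenvalue $\mu$ of $g$, as the latter need not be integral. I also note that Lemma \ref{length} already excludes from $W[m]$ every reflection, and more generally every torsion element of odd length; the eigenvalue argument is what additionally disposes of the even-length torsion, so the two observations together give a complete picture, although Minkowski's lemma by itself suffices.
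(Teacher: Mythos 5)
Your proof is correct and follows essentially the same route as the paper: both reduce the statement, via faithfulness and integrality of the Tits representation, to the classical fact that the level-$m$ congruence kernel of the integral matrix group is torsion free for $m \ge 3$. The only differences are cosmetic: the paper cites this fact for $\SL(n-1,\mathbb{Z})$ (from Farb--Margalit) and therefore first passes to the even-length subgroup $E(W)$ using Lemma \ref{length}, whereas you work directly with $\GL(n-1,\mathbb{Z})$ and supply a self-contained (and correct) proof of Minkowski's lemma.
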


\begin{proof}
Suppose that $|S|=n-1$, where $n \ge 3$. The Tits representation $\rho: W \to \GL(n-1, \mathbb Z)$ when restricted to the alternating subgroup $E(W)$ of all words of even lengths can be thought of as a representation $E(W)\to \SL(n-1,\mathbb Z)$. Note that $$E(W)[m]=\left(\rho{|_{E(W)}}\right)^{-1}\left(\SL(n-1,\mathbb Z)[m]\cap \rho(E(W))\right).$$ 
Since the principal congruence subgroup $\SL(n-1,\mathbb Z)[m]$ of $\SL(n-1,\mathbb Z)$ is torsion free for $m\geq 3$ \cite[Proposition 6.7]{MR2850125}, it follows that $W[m]$ is torsion free for each $m\geq 3$. This completes the proof.
\end{proof}

\begin{corollary}
A finite small Coxeter group has the congruence subgroup property.
\end{corollary}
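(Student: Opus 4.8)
The plan is to deduce this immediately from Corollary \ref{prin cong torsion free}. Let $W = \langle S \mid R \rangle$ be a finite small Coxeter group. First I would dispose of the degenerate case $|S| \le 1$: here $W$ is either trivial or isomorphic to $\mathbb{Z}_2$, and a direct inspection of the (at most one-dimensional) Tits representation shows that $W[3] = \{1\}$ in both cases. For $|S| \ge 2$, say $|S| = n-1$ with $n \ge 3$, Corollary \ref{prin cong torsion free} states precisely that the level $m$ principal congruence subgroup $W[m]$ is trivial for every $m \ge 3$ whenever $W$ is finite; in particular $W[3] = \{1\}$.

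Now let $H \le W$ be any finite index subgroup. Since $W$ is finite, such $H$ are simply the subgroups of $W$, and each of them satisfies $\{1\} = W[3] \subseteq H$. Thus $H$ contains a principal congruence subgroup, so it is a congruence subgroup. As $H$ was arbitrary, $W$ has the congruence subgroup property.

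There is no genuine obstacle in this statement: the whole content has already been absorbed into Corollary \ref{prin cong torsion free}, whose proof in turn reduces the torsion-freeness of $W[m]$ (for $m \ge 3$) to the classical torsion-freeness of the principal congruence subgroups of $\SL(n-1, \mathbb{Z})$. The only point requiring a word of care is the boundary case $|S| \le 1$, which lies outside the standing hypothesis $|S| \ge 2$ used in Lemma \ref{length} and Corollary \ref{prin cong torsion free}, and which I would simply check by hand as indicated above.
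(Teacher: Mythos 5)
Your proof is correct and is exactly the argument the paper intends: the corollary is stated without proof as an immediate consequence of the preceding result that $W[m]$ is trivial for $m\ge 3$ when $W$ is finite, so every subgroup of the finite group $W$ contains the trivial principal congruence subgroup $W[3]$. Your extra care with the degenerate case $|S|\le 1$ is a harmless refinement of the same approach.
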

\medskip

Let $W= \langle S \mid R \rangle$ be a small Coxeter group with $S=\{w_1,w_2,\ldots,w_{n-1}\}$, where $n \ge 3$. Fixing the ordered basis $\{e_1,e_2,\ldots,e_{n-1}\}$ for the real vector space $V$, by Lemma \ref{tits rep integral small}, we can view the Tits representation as the matrix representation $\rho:W \to \GL(n-1,\mathbb Z)$. For $1 \le k \le n-1$, if $(a_{i,j})$ denotes the matrix of $\sigma_k$ (see \eqref{Tits representation formula}), then one can see that
\begin{equation}\label{small coxeter matrix}
a_{i,j}=\begin{cases}
\delta_{i,j} & \text{ if } i\neq k,\\
\alpha(k,j) & \text{ if } i = k,
\end{cases}
\end{equation}
where $\alpha(k,j)=\alpha(j,k)=\begin{cases}
1 & \text{ if } m_{k,j} = 3, \\
0 & \text{ if } m_{k,j} = 2, \\
2 & \text{ if } m_{k,j} = \infty, \\
-1 & \text{ if } j=k.
\end{cases}
$

Now, if $1 \le k\neq \ell \le n-1$ and $(c_{i,j})$ denotes the matrix of $\sigma_k\sigma_l$, then 
\begin{equation}\label{cij}
c_{i,j}=\begin{cases}
    \delta_{i,j} & \text{ if } i\neq k,\quad i\neq \ell,\\
    \alpha(\ell,j) & \text{ if } i\neq k,\quad i=\ell,\\
    -\alpha(k,\ell) & \text{ if } i=k,\quad j=\ell,\\
	\alpha(k,j)+ \alpha(\ell,j)\alpha(k,\ell)    & \text{ if } i=k, \quad j\neq \ell.
\end{cases}
\end{equation}

Further, note that
$$c_{k,k}=\alpha(\ell,k)^2-1,\quad c_{\ell,\ell}=-1,\quad c_{k,\ell}=-\alpha(k,\ell) \quad \textrm{and} \quad c_{\ell,k}=\alpha(\ell,k).$$

For $j\neq \ell$, we set $$\gamma_j :=c_{k,j}=\begin{cases}
        \alpha(\ell,k)^2 -1 & \text{ if } j= k,  \\
       	\alpha(k,j)+ \alpha(\ell,j)\alpha(k,\ell) & \text{ if } j\neq k.
    \end{cases}$$

If $W$  is right-angled, then
\begin{equation}\label{econ}
\alpha(i,j)=\begin{cases}
~~0\mod 2 &\text{if } i\neq j,\\
-1\mod 2&\text{if } i = j,
    \end{cases} \quad \textrm{and} \quad \gamma_j=\begin{cases} -1\mod 4 & \text{if } j=k,\\
    ~~0 \mod 2 &\text{if } j\neq k.
    \end{cases}
\end{equation}

\begin{lemma}\label{prod matrix square}
Let $1 \le k\neq \ell \le n-1$  and let $(d_{i,j})$ be the matrix of $\left(\sigma_k\sigma_l\right)^2$. Then 
$$d_{i,j}=\delta_{i,j} \quad \textrm{if} \quad i\neq k,\ell, $$
$$
d_{k,j}=\begin{cases}
      \alpha(k,\ell)^4-3\alpha(k,\ell)^2+1 & \text{ if } j=k,\\
      -\alpha(k,\ell)^3+2\alpha(k,\ell) & \text{ if }  j=\ell,\\
	\gamma_j\alpha(k,\ell)^2-\alpha(k,\ell)\alpha(\ell,j)  & \text{ if }  j\neq k ,\ell,
\end{cases}
\quad \textrm{and} \quad 
d_{\ell,j}=\begin{cases}
        \alpha(\ell,k)^3-2\alpha(\ell,k)  &\text { if } j=k,\\
		-\alpha(\ell,k)^2 + 1  &\text { if } j=\ell, \\
		\alpha(\ell,k)\gamma_j	 &\text { if } j\neq k, \ell.
      \end{cases}
$$
\end{lemma}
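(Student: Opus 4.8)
The plan is to compute $(\sigma_k\sigma_\ell)^2$ directly as the matrix product $C^2$, where $C=(c_{i,j})$ is the matrix of $\sigma_k\sigma_\ell$ recorded in \eqref{cij}. The crucial structural feature is that $C$ agrees with the identity matrix in every row $i\notin\{k,\ell\}$: for such $i$ one has $c_{i,j}=\delta_{i,j}$ by \eqref{cij}. Consequently, for $i\notin\{k,\ell\}$ the $i$-th row of $C^2$ is $e_i^{\mathsf T}C$, which equals the $i$-th row of $C$, namely $e_i^{\mathsf T}$; this gives $d_{i,j}=\delta_{i,j}$ at once. It therefore remains only to compute the two rows $d_{k,\bullet}$ and $d_{\ell,\bullet}$.

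For these two rows I would expand $d_{i,j}=\sum_{r}c_{i,r}c_{r,j}$ and split the sum according to whether $r\in\{k,\ell\}$ or not. Since $c_{r,j}=\delta_{r,j}$ when $r\notin\{k,\ell\}$, the ``identity-rows'' part of the sum collapses to the single term $c_{i,j}$ when $j\notin\{k,\ell\}$ and vanishes when $j\in\{k,\ell\}$. This reduces each $d_{i,j}$ to an explicit combination of the entries $c_{k,k}=\alpha(\ell,k)^2-1$, $c_{k,\ell}=-\alpha(k,\ell)$, $c_{\ell,k}=\alpha(\ell,k)$, $c_{\ell,\ell}=-1$, the values $\alpha(\ell,j)$, and the $\gamma_j$'s. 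For instance, for $j\notin\{k,\ell\}$ one gets $d_{k,j}=c_{k,j}(1+c_{k,k})+c_{k,\ell}c_{\ell,j}=\gamma_j\alpha(k,\ell)^2-\alpha(k,\ell)\alpha(\ell,j)$, after using $1+c_{k,k}=\alpha(\ell,k)^2=\alpha(k,\ell)^2$; the remaining five cases, namely $d_{k,k}$, $d_{k,\ell}$, $d_{\ell,k}$, $d_{\ell,\ell}$, and $d_{\ell,j}$ for $j\notin\{k,\ell\}$, are disposed of in exactly the same way (here for the $\ell$-row one must retain the full row $c_{\ell,\bullet}=\alpha(\ell,\bullet)$, which need not be sparse).

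The only points requiring care, and the closest thing to an obstacle, are purely matters of bookkeeping: one must repeatedly invoke the symmetry $\alpha(k,\ell)=\alpha(\ell,k)$ coming from $m_{k,\ell}=m_{\ell,k}$ in order to match the stated closed forms, and one must separate the cases $j=k$, $j=\ell$, $j\notin\{k,\ell\}$ \emph{before} collapsing the sums, since the running index $r=j$ may or may not belong to the excluded set $\{k,\ell\}$. No genuinely new idea is needed beyond the block observation above, so the proof amounts to a short finite verification.
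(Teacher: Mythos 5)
Your proposal is correct and follows exactly the paper's own argument: the paper likewise observes that rows $i\neq k,\ell$ of $(c_{i,j})$ are standard basis vectors, splits $\sum_p c_{i,p}c_{p,j}$ into the part with $p\neq k,\ell$ (which collapses via $c_{p,j}=\delta_{p,j}$) plus the two terms $p=k,\ell$, and then reads off the same intermediate expressions such as $d_{k,j}=c_{k,j}(1+c_{k,k})+c_{k,\ell}c_{\ell,j}$ before substituting the $\alpha$'s and $\gamma_j$'s. No substantive difference.
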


\begin{proof}
If $i\neq k, \ell$, then 
$$    d_{i,j} =\sum_{p=1}^{n-1} c_{i,p}c_{p,j}   =\sum_{p=1}^{n-1} \delta_{i,p}c_{p,j} =c_{i,j} = \delta_{i,j}.$$ 
If $i=k$, then 
$$
\begin{aligned}
  d_{k,j}  &=\sum_{p=1}^{n-1} c_{k,p}c_{p,j} \\
      &=\sum_{\substack{p=1\\p\neq k,\ell}}^{n-1}c_{k,p}c_{p,j} + c_{k,k}c_{k,j} + c_{k,\ell}c_{\ell,j}\\
      &=\sum_{\substack{p=1\\p\neq k,\ell}}^{n-1}c_{k,p}\delta_{p,j} + c_{k,k}c_{k,j} + c_{k,\ell}c_{\ell,j}\\
      &=\begin{cases}
          c_{k,k}^2 + c_{k,\ell}c_{\ell,k} &\text { if } j=k,\\
          c_{k,\ell}(c_{k,k} - 1) &\text { if } j=\ell, \\
          c_{k,j}(1+c_{k,k})+c_{k,\ell}c_{\ell,j} &\text { if } j\neq k, \ell,
      \end{cases} \\
      &=\begin{cases}
      \alpha(k,\ell)^4-3\alpha(k,\ell)^2+1 & \text{ if } j=k,\\
      -\alpha(k,\ell)^3+2\alpha(k,\ell) & \text{ if }  j=\ell,\\
	\gamma_j\alpha(k,\ell)^2-\alpha(k,\ell)\alpha(\ell,j)  & \text{ if }  j\neq k ,\ell.
\end{cases}
\end{aligned}
$$
If $i=\ell$, then 
$$
\begin{aligned}
  d_{\ell,j}  &=\sum_{p=1}^{n-1} c_{\ell,p}c_{p,j} \\
      &=\sum_{\substack{p=1\\p\neq k,\ell}}^{n-1}c_{\ell,p}\delta_{p,j} + c_{\ell,k}c_{k,j} + c_{\ell,\ell}c_{\ell,j}\\
      &=\begin{cases}
         c_{\ell,k}(c_{k,k} - 1)  &\text { if } j=k,\\
		c_{\ell,k}c_{k,\ell} + 1  &\text { if } j=\ell, \\
         c_{\ell,j} + c_{\ell,k}c_{k,j} -c_{\ell,j} &\text { if } j\neq k, \ell,
      \end{cases} \\
      &=\begin{cases}
        \alpha(\ell,k)^3-2\alpha(\ell,k)  &\text { if } j=k,\\
		-\alpha(\ell,k)^2 + 1  &\text { if } j=\ell, \\
		\alpha(\ell,k)\gamma_j	 &\text { if } j\neq k, \ell.
      \end{cases}
      \end{aligned}
$$
This concludes the proof.
\end{proof}

Throughout, we denote the commutator subgroup of a group $G$ by $G^{'}$.

\begin{proposition}\label{racg commutator}
If $W$ is a right-angled Coxeter group, then $W[2]= W$ and $W[4]= W^{'}$.
\end{proposition}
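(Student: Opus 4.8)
The plan is to compute everything directly from the matrix formula \eqref{small coxeter matrix}. Write $\sigma_k = I_{n-1} + E_k$, where $E_k$ is the matrix whose only nonzero row is the $k$-th, with $(E_k)_{k,k} = -2$ and $(E_k)_{k,j} = \alpha(k,j)$ for $j \neq k$; since $W$ is right-angled, \eqref{econ} gives $\alpha(k,j) \in \{0,2\}$ for $j \neq k$. The equality $W[2] = W$ is then immediate: modulo $2$ each $E_k$ vanishes, so $\rho_2$ is trivial.

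For $W[4]$ I would first show $W^{'} \subseteq W[4]$, i.e.\ that $\rho_4(W)$ is abelian. The crucial point is that $E_kE_\ell \equiv 0 \pmod 4$ for $k \neq \ell$: the matrix $E_kE_\ell$ is supported on row $k$ and its $(k,j)$-entry is $(E_k)_{k,\ell}(E_\ell)_{\ell,j} = \alpha(k,\ell)\,(E_\ell)_{\ell,j}$, a product of two even integers. Hence $\sigma_k\sigma_\ell \equiv I_{n-1} + E_k + E_\ell \pmod 4$, which is symmetric in $k$ and $\ell$, so any two of the generators $\rho_4(w_k)$ commute and $\rho_4(W)$ is abelian. (One can also deduce this from Lemma \ref{prod matrix square}, which in the right-angled case yields $(\sigma_k\sigma_\ell)^2 \equiv I_{n-1} \pmod 4$, combined with $\sigma_k^2 = I_{n-1}$.) Thus $W^{'} \le W[4]$ and $\rho_4$ factors through $W^{\mathrm{ab}} = W/W^{'}$.

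For the reverse inclusion I would use that the abelianization of a right-angled Coxeter group is the elementary abelian $2$-group $(\mathbb{Z}_2)^{|S|}$ freely generated by the images $\bar w_i$ of the Coxeter generators — no two generators are identified because no exponent $m_{i,j}$ is odd. It then suffices to prove the induced map $\bar\rho_4 : (\mathbb{Z}_2)^{|S|} \to \GL(n-1,\mathbb{Z}_4)$ is injective. A typical element is $\prod_{i \in T}\bar w_i$ for a unique $T \subseteq \{1, \dots, n-1\}$, and using $E_iE_j \equiv 0 \pmod 4$ for distinct $i, j$ (which kills every higher product in the expansion) one gets $\bar\rho_4\big(\prod_{i\in T}\bar w_i\big) \equiv I_{n-1} + \sum_{i\in T}E_i \pmod 4$. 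The $(j,j)$-entry of $\sum_{i\in T}E_i$ equals $-2$ when $j \in T$ and $0$ otherwise; since $-2 \not\equiv 0 \pmod 4$, this is congruent to $I_{n-1}$ only for $T = \emptyset$. Hence $\bar\rho_4$ is injective, $[W:W[4]] = 2^{|S|} = [W:W^{'}]$, and therefore $W[4] = W^{'}$.

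The main obstacle is purely one of bookkeeping: being careful that the congruences modulo $4$ really hold — in particular $E_kE_\ell \equiv 0 \pmod 4$ and the vanishing of all higher products — and correctly invoking the structure of $W^{\mathrm{ab}}$ for a right-angled Coxeter group; the rest is routine matrix arithmetic from \eqref{small coxeter matrix} and \eqref{econ}.
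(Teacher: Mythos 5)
Your proof is correct, and at the strategic level it follows the same route as the paper: both arguments show that $\rho_4$ factors through the abelianisation $W^{\mathrm{ab}}\cong\mathbb{Z}_2^{|S|}$ and then prove that the induced map into $\GL(n-1,\mathbb{Z}_4)$ is injective. The difference is in the computational engine. The paper establishes the factorisation by computing all entries of $(\sigma_k\sigma_\ell)^2$ (Lemma \ref{prod matrix square}) and checking via \eqref{econ} that this matrix is the identity modulo $4$, and then proves injectivity by induction on the length of a word $u_{i_1}u_{i_2}\cdots u_{i_k}$, tracking the $i_k$-th row and column of the image. You instead isolate the single congruence $E_kE_\ell\equiv 0\pmod 4$ for $k\neq\ell$ (a product of two even matrices, each supported on one row), which delivers both halves at once: commutativity of the $\rho_4(w_i)$, and the closed formula $\rho_4\bigl(\prod_{i\in T}w_i\bigr)\equiv I+\sum_{i\in T}E_i$, from which injectivity is read off the diagonal entries $-2\not\equiv 0\pmod 4$. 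This buys a shorter and more transparent argument that dispenses with the induction and with the explicit entries of Lemma \ref{prod matrix square}. The supporting facts you invoke are all in order: $\alpha(k,j)\in\{0,2\}$ off the diagonal in the right-angled case by \eqref{econ} and \eqref{small coxeter matrix}, the higher products in the expansion of $\prod_{i\in T}(I+E_i)$ are killed because each contains a factor $E_{i_1}E_{i_2}$ divisible by $4$, and no two Coxeter generators are identified in $W^{\mathrm{ab}}$ because every finite exponent of a right-angled system is even.
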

\begin{proof}
The assertion $W[2]= W$ is immediate from \eqref{small coxeter matrix}. Let $W= \langle S \mid R \rangle$ be a right-angled Coxeter group with $|S|=n-1$, and let $q:W\to \mathbb Z_2^{n-1}$ be the abelianisation map. Equation \eqref{econ} and Lemma \ref{prod matrix square} imply that the matrix $(d_{i,j})$ is the identity modulo 4. Thus, the map $\psi:\mathbb Z_2^{n-1} \to \GL(n-1,\mathbb Z_4)$, given by $\psi\left(q(w_i)\right)=\rho_4(w_i)$, is a group homomorphism.  We claim that $\psi$ is injective. Let $u_1,u_2,\ldots,u_{n-1}$ be generators for $\mathbb Z_2^{n-1}$, where $u_i=q(w_i)$.  Let $w=u_{j_1}u_{j_2}\cdots u_{j_r} \in  \mathbb Z_2^{n-1}$ be a word of length $r \ge 1$, where $j_1<j_2<\cdots < j_r$. Using \eqref{cij} and induction on $r$, one can see that if $\ell>j_r$, then the $\ell$-th row of $\psi(w)$ has $(\ell,\ell)$-entry as 1 and all other entries as 0. Now, suppose that  $u=u_{i_1}u_{i_2}\cdots u_{i_k} \in \ker(\psi)$ for some $k\ge1$. Since $u_i \not\in \ker(\psi)$ for all $i$, we can assume that $k>1$ and $i_1<i_2<\cdots < i_k$. Note that the $i_k$-th row of $\psi(u_{i_1}u_{i_2}\cdots u_{i_{k-1}})$ has $(i_k,i_k)$-entry as 1 and all other entries are 0, whereas the $i_k$-th column of $\psi(u_{i_k})$ has $(i_k,i_k)$-entry as $-1$ and all other entries are 0. Thus, $\psi(u)$ has $(i_k,i_k)$-entry as $-1$, which is a contradiction. Hence, $\psi$ is injective, and we obtain $W[4]=\ker (\rho_4)=\ker(q)=W^{'}$.
\end{proof}

\medskip

\subsection{Congruence subgroup property for small Coxeter groups}\label{sec congruence subgroup property}
In this section, we explore which small Coxeter groups have the congruence subgroup property.  Let $\mathcal{P}$ be a property of groups. Then a group is said to be virtually $\mathcal{P}$ if it has a finite index subgroup with property $\mathcal{P}$. We need the following well-known facts to prove the main results of this section.

\begin{proposition}\cite[Theorem II]{MR2700693}\label{propF2}
An infinite Coxeter group which is not virtually abelian has a finite index subgroup which surjects onto a free group of rank two.
\end{proposition}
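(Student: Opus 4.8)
The plan is to first reduce to the irreducible case. The class of virtually abelian groups is closed under finite direct products and under passing to finite-index overgroups, and both finite Coxeter groups and irreducible affine Coxeter groups are virtually abelian (an irreducible affine Coxeter group of rank $k+1$ being $\mathbb{Z}^{k}\rtimes W_0$ for a finite Weyl group $W_0$). So if one writes $(W,S)$ as the product $W=W_{T_1}\times\cdots\times W_{T_r}$ of its irreducible components and all the components were finite or affine, then $W$ would be virtually abelian, against the hypothesis; hence some component $W_0:=W_{T_i}$ is irreducible, infinite and non-affine. Since a finite-index subgroup $H_0\le W_0$ surjecting onto $F_2$ yields a finite-index subgroup $H_0\times\prod_{j\ne i}W_{T_j}\le W$ surjecting onto $F_2$ through the first projection, it suffices to treat an irreducible, infinite, non-affine Coxeter group $W$.

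For such $W$ the bilinear form $B$ of Section~\ref{sec2} is non-degenerate and indefinite (positive-definiteness of $B$ characterises finiteness of $W$, and positive semidefiniteness characterises the affine case). I would then invoke the theorem of Benoist--de la Harpe on Zariski closures of Coxeter groups, which identifies the Zariski closure of $\rho(W)$ in $\GL(V)$ with (essentially) the full orthogonal group $\Oo(B)$ of a noncompact form: this exhibits $\rho(W)$ as an infinite, unbounded, Zariski-dense subgroup of a noncompact orthogonal group, and in particular produces \emph{proximal} elements (a single eigenvalue of largest modulus), which can be seen concretely as products $\rho(w_i)\rho(w_j)$ with $m_{i,j}=\infty$ or, more generally, as arising from two non-commuting reflections with disjoint walls in the Tits cone. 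A ping-pong argument on $\mathbb{P}(V)$ applied to two independent such elements then produces a rank-two free subgroup $F\le W$ which is a Schottky subgroup: it carries an attractor/repeller system on $\mathbb{P}(V)$ and is in particular quasi-convex and undistorted in $W$.

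The last step, which I expect to be the genuine obstacle, is to upgrade this free subgroup to a free \emph{quotient} of a finite-index subgroup. Here I would use that $W$ is linear in characteristic zero (over $\mathbb{Z}$ when $W$ is small, by Lemma~\ref{tits rep integral small}), hence residually finite with the Schottky subgroup $F$ separable; following the method of Margulis--Vinberg on linear groups virtually having a free quotient, one passes to a finite-index subgroup $W^{*}\le W$ deep enough to separate the finitely many relevant double cosets and then builds, by a ping-pong/shrinking construction, a retraction $W^{*}\twoheadrightarrow F\cong F_2$ that restricts to the identity on $F$ and annihilates a transversal's worth of generators. The delicate point is precisely this upgrade: merely containing a copy of $F_2$ does not imply largeness (higher-rank lattices are the standard cautionary example), so one must exploit the Schottky/quasi-convex nature of $F$ together with residual finiteness. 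An alternative, more combinatorial route --- which still faces the same essential difficulty --- would replace $W$, via Dyer--Deodhar reflection-subgroup theory, by a Lorentzian reflection subgroup $W'$ acting on $\mathbb{H}^{m}$: if its fundamental polytope has infinite volume then $W'$ is word-hyperbolic with totally disconnected boundary, hence virtually free and large; if it has finite volume then $W'$ is a hyperbolic reflection lattice, large by cubulation (and geometrization when $m\le 3$). In either approach one still has to verify that the requisite Schottky pair, or Lorentzian reflection subgroup, always exists and to dispose of the sporadic compact and finite-volume hyperbolic Coxeter groups of rank up to $10$ --- and that bookkeeping is exactly what is packaged into \cite[Theorem~II]{MR2700693}.
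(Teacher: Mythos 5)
The paper offers no proof of this proposition: it is quoted verbatim, with citation, from Gonciulea's thesis, so there is no internal argument to compare yours against. Judged on its own terms, your reduction to a single irreducible, infinite, non-affine component is correct and is indeed the standard first step (virtual abelianness passes to finite direct products, finite and irreducible affine Coxeter groups are virtually abelian, and a virtual surjection onto $F_2$ from one factor extends to the product via the projection). Two small inaccuracies in the setup: the bilinear form $B$ of an infinite non-affine irreducible Coxeter group need not be non-degenerate (it is merely not positive semidefinite, and may have a nontrivial radical), so ``the full orthogonal group of a noncompact form'' requires the more careful case analysis of Benoist--de la Harpe; and not every such group has an exponent $m_{i,j}=\infty$, though your fallback to two non-commuting reflections generating an infinite dihedral group covers this.

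The genuine gap is exactly where you suspect it, and it is not mere bookkeeping. Everything up to ``$W$ contains a Schottky subgroup $F\cong F_2$'' is the Tits alternative, which is strictly weaker than the statement being proved. The mechanism you propose for the upgrade --- residual finiteness plus separability of $F$ yielding a finite-index subgroup that retracts onto $F$ --- is not a valid implication: a separable subgroup of a residually finite group need not be a virtual retract, and you yourself cite higher-rank lattices as witnesses that containing $F_2$ does not imply largeness. Making this step work is the entire content of the theorem; Gonciulea's argument and the Margulis--Vinberg argument each supply a genuinely different additional idea (respectively, a combinatorial analysis of reflection subgroups and parabolic quotients of the Coxeter diagram, and a theorem on quotients of non-amenable linear groups preserving a quadratic form), neither of which is reconstructed or replaced here. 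As written, the proposal establishes the easy reduction and then defers the theorem itself back to the reference it is meant to prove.
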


\begin{proposition}\cite[Theorem 3.8.1]{surybook}\label{necesarycsp}
Let $G$ be a subgroup of $\SL(n,\mathbb Z)$ which has the congruence subgroup property. Then there is no finite index subgroup of $G$ which surjects onto a non-abelian free group. In particular, $G$ is not virtually free.
\end{proposition}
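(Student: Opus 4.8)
The plan is to pass to profinite completions: the congruence subgroup property forces the profinite completion of $G$ to sit inside $\SL(n,\widehat{\mathbb Z})=\prod_p \SL(n,\mathbb Z_p)$, and this profinite group is far too rigid to have the profinite completion of a non-abelian free group as a quotient. Suppose, for contradiction, that a finite-index subgroup $H\le G$ admits a surjection $\pi\colon H\twoheadrightarrow F$ onto a non-abelian free group $F$. Since $G$ has the congruence subgroup property and $[G:H]<\infty$, the subgroup $H$ contains a principal congruence subgroup $G[m]=G\cap\Gamma(m)$, where $\Gamma(m)=\ker\big(\SL(n,\mathbb Z)\to\SL(n,\mathbb Z/m)\big)$. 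As $G[m]$ has finite index in $H$, its image $\pi(G[m])$ has finite index in $F$ and is therefore again a non-abelian free group of finite rank, by the Nielsen--Schreier theorem. Replacing $H$ by $N:=G[m]$ and $F$ by $\pi(N)$, I may assume that $N$ is a principal congruence subgroup of $G$ and that $N\twoheadrightarrow F$ with $F$ free of finite rank $\ge 2$.

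The gain from this reduction is that, since $G$ has the congruence subgroup property, \emph{every} finite-index subgroup of $N$ again contains some principal congruence subgroup of $G$; that is, on $N$ the full profinite topology coincides with the congruence topology inherited from $\SL(n,\mathbb Z)$. Consequently the profinite completion $\widehat N$ is canonically the closure of $N$ inside the congruence completion $\SL(n,\widehat{\mathbb Z})=\prod_p\SL(n,\mathbb Z_p)$; in particular $\widehat N$ is topologically isomorphic to a closed subgroup of $\prod_p\SL(n,\mathbb Z_p)$. On the other hand, the surjection $N\twoheadrightarrow F$ induces a continuous surjection $\widehat N\twoheadrightarrow\widehat F$ of profinite completions, since the image of $\widehat N$ is compact and contains a dense copy of $F$.

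It remains to see that $\widehat F$ cannot be a continuous quotient of a closed subgroup of $\prod_p\SL(n,\mathbb Z_p)$. Here I would appeal to subgroup growth: by Lubotzky's theorem the group $\SL(n,\widehat{\mathbb Z})$ --- and hence every closed subgroup of it --- has polynomial subgroup growth, and the number of open subgroups of a given index does not increase under a continuous surjection; so $\widehat F$ would have polynomial subgroup growth. But the number of open subgroups of index $k$ in $\widehat F$ equals the number of index-$k$ subgroups of $F$, which grows superexponentially for $F$ non-abelian free --- a contradiction. (One could argue instead with composition factors: a closed subgroup of $\prod_p\SL(n,\mathbb Z_p)$ admits only finitely many alternating groups among its upper composition factors, whereas $\widehat F$ admits $A_m$ for every $m\ge 5$.) Finally, the last assertion follows by applying the first part to a hypothetical finite-index free subgroup $F\le G$ of rank $\ge 2$, together with the identity map $F\twoheadrightarrow F$.

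The reduction and the identification of $\widehat N$ with a closed subgroup of $\SL(n,\widehat{\mathbb Z})$ are routine once one unwinds the definition of the congruence subgroup property; I expect the only real obstacle to be the final step --- showing that $\widehat F$ is simply too large (polynomial versus superexponential subgroup growth, or finitely many versus infinitely many alternating composition factors) to be a continuous quotient of a closed $\widehat{\mathbb Z}$-linear profinite group. This is where all the genuine input, ultimately Lubotzky's polynomial-subgroup-growth theorem for $\SL(n,\widehat{\mathbb Z})$, is concentrated.
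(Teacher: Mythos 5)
The paper does not actually prove this proposition; it quotes it from Sury's book, so there is no internal argument to compare yours against. Your reduction is correct and well organized: the congruence subgroup property forces the profinite and congruence topologies to coincide on $N=G[m]$ (every finite-index subgroup of $N$ is finite-index in $G$, hence contains some $G[m']$), so $\widehat N$ is indeed the closure of $N$ in $\SL(n,\widehat{\mathbb Z})$, and it surjects continuously onto $\widehat F$.

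The gap is in your main route to the contradiction. Lubotzky's theorem does not assert that $\SL(n,\widehat{\mathbb Z})$ has polynomial subgroup growth; it computes the congruence subgroup growth of $\SL(n,\mathbb Z)$, $n\ge 3$, to be of strict type $k^{\log k/\log\log k}$, which is super-polynomial (and by Lubotzky--Mann--Segal a finitely generated residually finite group with PSG is virtually solvable, which $\SL(n,\mathbb Z)$ is not). Worse, subgroup growth does not pass to closed subgroups by any monotonicity: closed subgroups of $\prod_p\SL(n,\mathbb Z_p)$ have unbounded rank (the split tori over many primes already contain elementary abelian $\ell$-groups of arbitrarily large rank), so the bound you need --- that an arbitrary closed subgroup of $\SL(n,\widehat{\mathbb Z})$ has open-subgroup growth $o(k\cdot k!)$ --- is not a formal consequence of anything you invoke, even after correcting the growth type of the ambient group. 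Your parenthetical alternative is the argument that actually works, and it is essentially the standard one: every finite continuous quotient of $\widehat N$ is a section of some $\SL(n,\mathbb Z/m)$; killing the unipotent congruence kernels and splitting over primes, its non-abelian composition factors are composition factors of subgroups of the groups $\SL(n,\mathbb F_p)$, and by Jordan's theorem (or Larsen--Pink) only alternating groups $A_d$ with $d$ bounded in terms of $n$, uniformly in $p$, occur among these; meanwhile $F$ surjects onto $A_d$ for every $d$, so $\widehat F$ has every $A_d$ as an upper composition factor. That uniform bound on alternating sections of $\GL(n,\mathbb F_p)$ is where all the content lives; it is a genuine theorem and should be stated and referenced rather than left in a parenthesis.
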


It is remarked in  \cite[p.312]{MR1626421} that for Coxeter groups which are not virtually abelian, the existence of a homomorphism onto the infinite cyclic group suggests that, in some sense, this class of Coxeter groups cannot have the congruence subgroup property. We confirm this remark for small Coxeter groups.

\begin{theorem}\label{virtual abelian and csp}
An infinite small Coxeter group which is not virtually abelian does not have the congruence subgroup property.
\end{theorem}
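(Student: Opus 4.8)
The plan is to combine the two cited facts, Proposition~\ref{propF2} and Proposition~\ref{necesarycsp}, with the integrality of the Tits representation provided by Lemma~\ref{tits rep integral small}. Let $W$ be an infinite small Coxeter group which is not virtually abelian, with Coxeter system $(W,S)$ and $|S|=n-1$. By Lemma~\ref{tits rep integral small} the Tits representation gives an embedding $\rho\colon W \hookrightarrow \GL(n-1,\mathbb Z)$, and the principal congruence subgroups $W[m]$ of Section~\ref{sec3} are defined exactly via this embedding. The strategy is to show that $W$ has a finite index subgroup surjecting onto a non-abelian free group, and then to invoke Proposition~\ref{necesarycsp}, which asserts that a subgroup of $\SL(n,\mathbb Z)$ with the congruence subgroup property admits no such finite index subgroup.

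First I would apply Proposition~\ref{propF2} to obtain a finite index subgroup $H \le W$ together with a surjection $H \twoheadrightarrow F_2$ onto the free group of rank two. Next I would pass to the even-length subgroup: by Corollary~\ref{prin cong torsion free} (or rather the discussion preceding it) the subgroup $E(W)$ of all words of even length has index $2$ in $W$ and $\rho$ restricts to a representation $E(W)\to \SL(n-1,\mathbb Z)$. Replacing $H$ by $H\cap E(W)$, which is still of finite index in $W$ (and in $E(W)$), we get a finite index subgroup $H_0 \le E(W) \le \SL(n-1,\mathbb Z)$. The image of the surjection $H\twoheadrightarrow F_2$, when restricted to the finite index subgroup $H_0$, is a finite index subgroup of $F_2$, hence itself a non-abelian free group of finite rank $\ge 2$; thus $H_0$ surjects onto a non-abelian free group.

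Now suppose, for contradiction, that $W$ has the congruence subgroup property. I claim this forces $E(W)\le \SL(n-1,\mathbb Z)$ to have the congruence subgroup property as well: a finite index subgroup of $E(W)$ is a finite index subgroup of $W$, so it contains some $W[m]$; intersecting with $E(W)$ and noting $W[m]\cap E(W)=E(W)[m]$ for $m\ge 3$ shows it contains a principal congruence subgroup of $E(W)$ in the $\SL$-sense. Then Proposition~\ref{necesarycsp}, applied to $G=E(W)\le\SL(n-1,\mathbb Z)$, says no finite index subgroup of $E(W)$ surjects onto a non-abelian free group, contradicting the existence of $H_0$. Hence $W$ does not have the congruence subgroup property.

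The main obstacle, and the only point requiring genuine care, is the bookkeeping in relating the congruence subgroup property ``in $W$'' (defined via the $(n-1)$-dimensional Tits representation and the subgroups $W[m]$) to the congruence subgroup property ``in $\SL(n-1,\mathbb Z)$'' for the index-two subgroup $E(W)$; one must check that the two notions of principal congruence subgroup are compatible under the identification $E(W)[m]=E(W)\cap \SL(n-1,\mathbb Z)[m]$ and that ``contains a $W[m]$'' descends to ``contains an $E(W)[m]$''. Everything else is a direct concatenation of the quoted results.
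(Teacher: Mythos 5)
Your proposal is correct and follows essentially the same route as the paper: invoke Proposition~\ref{propF2} to get a finite index subgroup surjecting onto $F_2$, intersect with the even-length subgroup $E(W)$ (whose image lies in $\SL(n-1,\mathbb Z)$) while keeping a surjection onto a non-abelian free group, and then apply Proposition~\ref{necesarycsp}. The only cosmetic difference is that you phrase the last step as a contradiction (CSP for $W$ would force CSP for $E(W)$), whereas the paper directly pulls back a finite index subgroup of $\rho(E(W))$ containing no principal congruence subgroup; the bookkeeping identity $W[m]=E(W)[m]$ for $m\ge 3$ that you flag is exactly the point both arguments rely on.
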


\begin{proof}
Let $W$ be an infinite small Coxeter group which is not virtually abelian. Note that the subgroup $E(W)$ of all even length words of $W$ is of index two in W, and the Tits representation $\rho$ restricts to a representation $E(W) \to \SL(n,\mathbb Z)$. Since $W$ is not virtually abelian, by Proposition \ref{propF2}, there exists a finite index subgroup $H$ of $W$ and a surjective group homomorphism $\phi:H \twoheadrightarrow F_2.$ If we set $K := H\cap E(W)$, then either $K=H$ or $K$ is an index two subgroup of $H$. It is a basic observation that the index $[\phi(H): \phi(K)]$ divides the index $[H: K]$. Thus,  $\phi(K) $ is either $F_2$ or a free subgroup of rank 3 (by the Nielsen-Schreier Theorem). Consequently, $\rho(K)$ is a finite index subgroup of $\rho(E(W))$ which surjects onto a non-abelian free group. By Proposition \ref{necesarycsp}, there exists a finite index subgroup $P$ of $\rho(E(W))$ which does not contain any principal congruence subgroup $\rho(E(W)) \cap \SL(n,\mathbb Z)[m]$ of $\rho(E(W))$. Note that $\rho^{-1}(P)$ is also a finite index subgroup of $W$ and it does not contain any principal congruence subgroup $W[m]=\rho^{-1}(\rho(W) \cap \GL(n,\mathbb Z)[m])$ of $W$, which proves the theorem.
\end{proof}
\par

Given a simple graph $\Gamma$ with vertex set $V(\Gamma)$ and edge set $E(\Gamma)$, we can define a right-angled Coxeter group $W(\Gamma)$ by the presentation 
$$
W(\Gamma)=\langle V(\Gamma)~ \mid~ v_i^2=1~ \textrm{for all}~v_i \in V(\Gamma) ~\textrm{and}~ v_i v_j= v_j v_i~\textrm{whenever}~ (v_i, v_j)  \in E(\Gamma) \rangle.
$$
Recall that, the join of two graphs $\Gamma_1$ and $\Gamma_2$ is a graph constructed from the disjoint union of $\Gamma_1$ and $\Gamma_2$ by connecting each vertex of $\Gamma_1$ to each vertex of $\Gamma_2$. The following fact is well-known, but we give a proof here for the sake of completeness.

\begin{lemma}\label{virtual abelian coxeter group criteria}
 Let $\Gamma$ be a finite simple graph. Then the right-angled Coxeter group $W(\Gamma)$ has a finite index abelian subgroup of rank $n\geq 0$  if and only if $\Gamma$  is the join of a complete graph on $m$ vertices for some $m\geq 0$ and $n$ copies of a graph with two isolated vertices.
\end{lemma}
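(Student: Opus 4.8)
The plan is to translate the group-theoretic hypothesis into a purely combinatorial statement about $\Gamma$. I would use four standard facts about right-angled Coxeter groups: the join of graphs corresponds to the direct product of the associated groups, so $W(\Gamma_1*\Gamma_2)\cong W(\Gamma_1)\times W(\Gamma_2)$; for any subset $A\subseteq V(\Gamma)$ the special subgroup $\langle A\rangle\le W(\Gamma)$ is canonically isomorphic to $W(\Gamma[A])$, where $\Gamma[A]$ is the induced subgraph; being virtually abelian is inherited by every subgroup (intersect a finite-index abelian subgroup with the given subgroup); and a free product of two nontrivial groups that are not both of order two contains a nonabelian free subgroup, hence is not virtually abelian. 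Note also that $W(K_m)\cong\mathbb{Z}_2^m$ and that $W(\overline{K_2})\cong\mathbb{Z}_2*\mathbb{Z}_2$, the infinite dihedral group $D_\infty$, where $\overline{K_2}$ denotes the two-vertex edgeless graph.

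For the easy implication, if $\Gamma$ is the join of $K_m$ and $n$ copies of $\overline{K_2}$, then $W(\Gamma)\cong\mathbb{Z}_2^m\times D_\infty^{\,n}$. Since the translation subgroup $\mathbb{Z}\le D_\infty$ has index two, $\{1\}^m\times\mathbb{Z}^n$ is a free abelian subgroup of rank $n$ and index $2^{m+n}$, which settles this direction. I would also record, for later use, that \emph{every} finite-index abelian subgroup $A$ of $\mathbb{Z}_2^m\times D_\infty^{\,n}$ has rank exactly $n$: $A$ is finitely generated abelian, and $A\cap(\{1\}^m\times\mathbb{Z}^n)$ has finite index in both $A$ and $\{1\}^m\times\mathbb{Z}^n\cong\mathbb{Z}^n$, forcing the rank of $A$ to equal $n$.

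For the converse, assume $W(\Gamma)$ has a finite-index abelian subgroup of rank $n$; in particular it is virtually abelian. The heart of the matter is to show $\overline{\Gamma}$ has maximum degree at most $1$, i.e.\ is a disjoint union of isolated vertices and edges. If $\Gamma$ had three pairwise non-adjacent vertices, the corresponding special subgroup would be $\mathbb{Z}_2*\mathbb{Z}_2*\mathbb{Z}_2$; if $\Gamma$ had an edge together with a vertex adjacent to neither of its endpoints, the special subgroup would be $(\mathbb{Z}_2\times\mathbb{Z}_2)*\mathbb{Z}_2$. Each contains a nonabelian free subgroup, contradicting virtual abelianness, so neither configuration occurs. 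Hence every three-vertex induced subgraph of $\Gamma$ has at least two edges; among the four three-vertex graphs $\overline{K_3},\ K_2\sqcup K_1,\ P_3,\ K_3$ this excludes exactly the first two. Equivalently, every three-vertex induced subgraph of $\overline{\Gamma}$ spans at most one edge, i.e.\ $\overline{\Gamma}$ contains no path on three vertices, i.e.\ $\overline{\Gamma}$ has maximum degree $\le 1$. Thus $\overline{\Gamma}$ is the disjoint union of $m$ isolated vertices and $n'$ edges; taking complements (the complement of a disjoint union is the join of the complements, the complement of an isolated vertex is an isolated vertex, and the complement of $K_2$ is $\overline{K_2}$) shows $\Gamma$ is the join of $K_m$ and $n'$ copies of $\overline{K_2}$, so $W(\Gamma)\cong\mathbb{Z}_2^m\times D_\infty^{\,n'}$. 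By the rank computation above $n'=n$, completing the proof. The only genuinely non-routine steps are this combinatorial reduction — forcing $\overline{\Gamma}$ to have maximum degree $\le 1$ from the two forbidden three-vertex configurations, which is a short case check — and keeping track that the numerical parameter really is the rank; the remaining steps are direct applications of the facts listed above.
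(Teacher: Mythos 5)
Your proof is correct, and it rests on the same two obstructions as the paper's: the rank-three parabolic subgroups $\mathbb{Z}_2\ast\mathbb{Z}_2\ast\mathbb{Z}_2$ and $(\mathbb{Z}_2\times\mathbb{Z}_2)\ast\mathbb{Z}_2$, which are not virtually abelian. The differences are in the execution. The paper reaches these obstructions via the retraction of $W(\Gamma)$ onto a standard parabolic subgroup (so it uses that quotients of virtually abelian groups are virtually abelian) and then argues iteratively: any two non-adjacent vertices must each be adjacent to every other vertex, so one peels off an $\overline{K_2}$ join factor and recurses on the remaining graph. You instead treat the parabolic as a subgroup and package the combinatorics in one step: the two forbidden three-vertex configurations say exactly that the complement $\overline{\Gamma}$ contains no induced path on three vertices, hence has maximum degree at most one, which immediately yields the join decomposition. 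These are two dressings of the same combinatorial fact, though yours is arguably cleaner. One place where you are more careful than the paper: you verify that every finite-index abelian subgroup of $\mathbb{Z}_2^m\times D_\infty^{\,n}$ has rank exactly $n$, so the number of $\overline{K_2}$ factors produced by the graph argument really matches the rank $n$ in the hypothesis; the paper leaves this bookkeeping implicit. The only input you take on faith is the standard fact that a free product $A\ast B$ with $|A|\ge 3$ and $|B|\ge 2$ contains a non-abelian free subgroup, whereas the paper cites a reference for the non-virtual-abelianness of the two specific groups; that fact is genuinely standard, so this is not a gap.
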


\begin{proof}
Suppose that the right-angled Coxeter group $W(\Gamma)$ has a finite index abelian subgroup of rank $n\geq 0$. If $|V(\Gamma)| \leq 3$, then $W(\Gamma)$ is one of the following  groups: $\mathbb Z_2$, $D_\infty$, $\mathbb Z_2^2$, $\mathbb Z_2 * \mathbb Z_2 * \mathbb Z_2$, $\mathbb Z_2^2 * \mathbb Z_2$, $\mathbb Z_2\oplus D_\infty$ or $\mathbb Z_2^3$. It follows from \cite[Proposition 2.1]{MR4093966} that the groups $\mathbb Z_2 * \mathbb Z_2 * \mathbb Z_2$ and $\mathbb Z_2^2 * \mathbb Z_2$ are not virtually abelian. Further, the remaining groups satisfy the desired graph-theoretic property, which establishes the forward implication when $|V(\Gamma)|\leq 3$.
\par

Now, assume that $V(\Gamma) \geq 4$. If $\Gamma$ is complete, there is nothing to prove. Suppose that $\Gamma$ is not complete. Then there exist two non-adjacent vertices $v_1$ and $v_2$. Let $u$ be any other vertex of $\Gamma$. Suppose that $u$ is not adjacent to at least one of $v_1$ or $v_2$. Then the group $W(\Gamma)$ surjects onto the standard parabolic subgroup generated by $\{v_1,v_2,u\}$ which is isomorphic to either $\mathbb Z_2 * \mathbb Z_2 * \mathbb Z_2$ or $\mathbb Z_2^2 * \mathbb Z_2$. Since a quotient of a virtually abelian group is virtually abelian, and the groups $\mathbb Z_2 * \mathbb Z_2 * \mathbb Z_2$ and $\mathbb Z_2^2 * \mathbb Z_2$ are not virtually abelian, we get a contradiction. Hence, $v_1$ and $v_2$ are adjacent to every other vertex of $\Gamma$. This shows that $\Gamma$ is the join of the edgeless graph on $v_1, v_2$ and a subgraph $\Gamma^{'}$ spanned by the remaining vertices. The group $W(\Gamma^{'})$ is virtually abelian being a quotient of $W(\Gamma)$, and hence we can repeat the preceding argument on $W(\Gamma^{\prime})$ leading to the desired condition on the graph $\Gamma$. Conversely, if $\Gamma$ is the join of a complete graph on $m$ vertices for some $m\geq 0$ and $n$ copies of a graph with two isolated vertices, then $W(\Gamma)$ is isomorphic to $\mathbb{Z}_2^m \oplus  D_{\infty}^n$, which clearly  has a finite index abelian subgroup of rank $n$. 
\end{proof}

\begin{proposition}\label{csp fails for T_n}
An infinite irreducible right-angled Coxeter group of rank more than two does not have the congruence subgroup property.  In particular, the congruence subgroup property fails for $T_n$ when $n\geq 4$.
\end{proposition}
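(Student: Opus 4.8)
The plan is to derive the statement from Theorem \ref{virtual abelian and csp} once we know that an infinite irreducible right-angled Coxeter group of rank at least three is never virtually abelian. Observe first that a right-angled Coxeter group $W(\Gamma)$ has all its exponents $m_{i,j}$ equal to $1$, $2$, or $\infty$, so it is a small Coxeter group, and hence Theorem \ref{virtual abelian and csp} becomes available as soon as virtual abelianness is ruled out.

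To rule it out, I would combine Lemma \ref{virtual abelian coxeter group criteria} with the standard fact that the Coxeter system $(W(\Gamma), V(\Gamma))$ is irreducible precisely when the complement graph $\overline{\Gamma}$ is connected, equivalently when $\Gamma$ is not a non-trivial join. Suppose $W(\Gamma)$ is virtually abelian. By Lemma \ref{virtual abelian coxeter group criteria}, $\Gamma$ is the join of a complete graph $K_m$ on $m \ge 0$ vertices with $n \ge 0$ copies of the two-vertex edgeless graph $\overline{K_2}$, so $W(\Gamma) \cong \mathbb Z_2^{m} \oplus D_\infty^{n}$ and $|V(\Gamma)| = m + 2n$. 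If $m \ge 1$ and $n \ge 1$, this is a non-trivial join; if $m \ge 2$ and $n = 0$, then $K_m = K_1 * K_{m-1}$ is a non-trivial join; if $m = 0$ and $n \ge 2$, it is again a non-trivial join. Hence the only irreducible possibilities are $(m,n)=(1,0)$, i.e.\ $W(\Gamma) \cong \mathbb Z_2$ of rank one, and $(m,n)=(0,1)$, i.e.\ $W(\Gamma) \cong D_\infty$ of rank two. So an irreducible right-angled Coxeter group of rank more than two cannot be virtually abelian (and is automatically infinite, since the only finite right-angled Coxeter groups are the $\mathbb Z_2^{k}$, which are reducible once $k \ge 2$), and by Theorem \ref{virtual abelian and csp} it lacks the congruence subgroup property.

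For the application to $T_n$, note that $T_n = W(\Gamma_n)$ where $\Gamma_n$ has vertices $s_1, \dots, s_{n-1}$ and an edge between $s_i$ and $s_j$ exactly when $|i-j| \ge 2$; thus $\overline{\Gamma_n}$ is the path on $s_1, \dots, s_{n-1}$, which is connected, so $T_n$ is irreducible. When $n \ge 4$ its rank $n-1$ is at least three, and $T_n$ is infinite since it contains the standard parabolic subgroup $\langle s_1, s_2 \rangle \cong D_\infty$. Applying the first part of the proposition gives the claim. Alternatively, one can read off directly from Lemma \ref{virtual abelian coxeter group criteria} that $\Gamma_n$ is not of the required join form for $n \ge 4$: for instance $\Gamma_4$ is a single edge together with an isolated vertex, which has no vertex adjacent to all others and is not a join of copies of $\overline{K_2}$, so $T_4$ is not virtually abelian.

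The only point needing care is the dictionary between irreducibility of $(W(\Gamma), V(\Gamma))$ and connectedness of $\overline{\Gamma}$, together with the bookkeeping of which pairs $(m,n)$ make the join of $K_m$ with $n$ copies of $\overline{K_2}$ decompose non-trivially; both are elementary, so I do not anticipate a genuine obstacle. Should one prefer to avoid the complement-graph dictionary entirely, one can instead argue that an irreducible Coxeter group isomorphic to $\mathbb Z_2^{m} \oplus D_\infty^{n}$ with $m + 2n \ge 3$ would split visibly as a direct product of two nontrivial standard parabolic subgroups, contradicting irreducibility directly.
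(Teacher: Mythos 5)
Your proposal is correct and follows essentially the same route as the paper: both reduce to showing a virtually abelian $W(\Gamma)$ forces $\Gamma$ to be the join from Lemma \ref{virtual abelian coxeter group criteria}, hence $W \cong \mathbb{Z}_2^m \oplus D_\infty^n$, which contradicts irreducibility in rank at least three, and then invoke Theorem \ref{virtual abelian and csp}. The only cosmetic difference is that you phrase the irreducibility contradiction via the complement-graph/join dictionary, while the paper splits into the cases $m \ge 1$ (a $\mathbb{Z}_2$ direct factor) and $W \cong D_\infty^n$ with $n \ge 2$.
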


\begin{proof}
Let $W$ be an infinite irreducible right-angled Coxeter group of rank $k \ge 3$ and $\Gamma$ the graph such that $W=W(\Gamma)$. Suppose that $W$ has a finite index abelian subgroup of rank $n\geq 0$.  Then, by Lemma \ref{virtual abelian coxeter group criteria}, the graph $\Gamma$ is the join of a complete graph on $m$ vertices for some $m\geq 0$ and $n$ copies of a graph with two isolated vertices. If $k\neq 2n$, then $m\geq 1$. Thus, there exists a vertex that is adjacent to every other vertex of $\Gamma$. Consequently, $W$ decomposes into a direct product of $\mathbb Z_2$ with another Coxeter group, a contradiction to the irreducibility of $W$. If $k=2n$, then $W(\Gamma)$ is isomorphic to $\bigoplus_n D_\infty$ where $n\geq 2$ (since we assumed $k\geq 3$) which is not possible as $W$ is irreducible. Hence, $W$ is not virtually abelian, and the assertion follows from Theorem \ref{virtual abelian and csp}.
\end{proof}

\begin{proposition}\label{csp fails for L_n}
A virtually free small Coxeter group does not have the congruence subgroup property.  In particular, the congruence subgroup property fails for $L_n$ when $n\geq 4$.
\end{proposition}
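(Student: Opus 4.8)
The plan is to deduce the first assertion directly from Theorem~\ref{virtual abelian and csp}, after checking that a virtually free small Coxeter group satisfies its hypotheses. Here I read ``virtually free'' as ``admitting a finite index free subgroup of rank at least two'' (the sense relevant to Proposition~\ref{necesarycsp}); some such restriction is needed, since e.g. $T_3 \cong D_\infty$ is virtually infinite cyclic and does have the congruence subgroup property with respect to its Tits representation.

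So let $W$ be a small Coxeter group possessing a finite index subgroup $F$ which is free of rank $k \ge 2$. First I would note that $W$ is infinite, as it contains the infinite group $F$. Next I would verify that $W$ is not virtually abelian. Indeed, being virtually abelian is inherited by subgroups --- if $A \le W$ is abelian with $[W:A]<\infty$ and $H \le W$, then $A \cap H$ is abelian with $[H : A\cap H]<\infty$ --- so virtual abelianity of $W$ would force $F$ to be virtually abelian; but by the Nielsen-Schreier index formula every finite index subgroup $H$ of $F$ is free of rank $1 + [F:H](k-1) \ge 2$, hence non-abelian, so $F$ has no abelian subgroup of finite index. Thus $W$ is an infinite small Coxeter group that is not virtually abelian, and Theorem~\ref{virtual abelian and csp} yields that $W$ does not have the congruence subgroup property.

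For the ``in particular'' clause I would recall that $L_n$ is a small Coxeter group (its Coxeter exponents lie in $\{1,3,\infty\}$) and that, by \cite{MR4270786}, the pure triplet group $PL_n=\ker(L_n\to S_n)$ is a non-abelian free group of finite rank for every $n\ge 4$. Since $[L_n:PL_n]=n!<\infty$, the group $L_n$ is virtually free in the above sense, and the first part applies. One could equally invoke Theorem~\ref{virtual abelian and csp} for $L_n$ directly, since $L_n$ contains $PL_n$ with finite index and is therefore infinite and, as above, not virtually abelian.

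The only delicate point is the harmless one of fixing the meaning of ``virtually free'' so that the statement holds literally; beyond that, the argument is a short corollary of Theorem~\ref{virtual abelian and csp} whose entire content is the two elementary observations that a group with a finite index non-abelian free subgroup is infinite and is not virtually abelian. If a self-contained proof is preferred, one can instead rerun the proof of Theorem~\ref{virtual abelian and csp} verbatim, taking $F\cap E(W)$ --- a finite index, non-abelian free subgroup of $E(W)$, which therefore surjects onto $F_2$ --- in place of the subgroup supplied there by Proposition~\ref{propF2}.
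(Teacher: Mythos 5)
Your argument is correct, and your main route differs from the paper's in a mild but genuine way. The paper does not pass through Theorem~\ref{virtual abelian and csp} at all: it takes the given finite index free subgroup $H\le W$ of rank $\ge 2$, intersects it with the even subgroup to get $K=H\cap E(W)$, notes via Nielsen--Schreier that $K$ is non-abelian free of finite index in $E(W)$, and then applies Proposition~\ref{necesarycsp} directly to $\rho(E(W))$, pulling the offending finite index subgroup back to $W$ --- exactly the ``self-contained'' variant you sketch in your last paragraph. Your primary route instead verifies the hypotheses of Theorem~\ref{virtual abelian and csp} (infinite, not virtually abelian) and cites it as a black box; this is shorter on the page but logically heavier, since that theorem invokes Gonciulea's Proposition~\ref{propF2} to \emph{produce} a finite index subgroup surjecting onto $F_2$, which is unnecessary here because the free subgroup is already in hand. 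Both are valid. Your preliminary remark that ``virtually free'' must be read as ``having a finite index non-abelian free subgroup'' (with $T_3\cong D_\infty$ as the counterexample otherwise) is a genuinely useful clarification: the paper makes the same assumption silently by writing ``free subgroup of rank $n\ge 2$'' in its proof without flagging it in the statement. For the ``in particular'' clause, you cite \cite{MR4270786} for the freeness and non-abelianity of $PL_n$, whereas the paper rederives freeness from Khovanov's $K(\pi,1)$ arrangement results (and only later, in Proposition~\ref{freeness of pln}, computes the rank, confirming it exceeds one); either source suffices.
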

\begin{proof}
Let $W$ be a virtually free small Coxeter group and let $H$ be its finite index free subgroup of rank $n \ge 2$. Then $K:= H\cap E(W)$ is a finite index free subgroup (being a subgroup of $H$) of $E(W)$, and hence it is non-trivial. Furthermore, $K$ is non-abelian free since it is either $H$ or a free subgroup of $H$ of rank $1+2(n-1)$ (by the Nielsen-Schreier Theorem). By Proposition \ref{necesarycsp}, there exists a finite index subgroup $P$ of $\rho(E(W))$, which does not contain any principal congruence subgroup of $\rho(E(W))$. It follows that $\rho^{-1}(P)$ does not contain any principal congruence subgroup $\rho^{-1}(\rho(W) \cap \GL(n,\mathbb Z)[m])$ of $W$. Since $\rho^{-1}(P)$ is also a finite index subgroup of $W$,  the first assertion follows. 
\par
For $n \ge 4$, consider the space
$$
Y_n=\mathbb{R}^n \setminus \{(x_1, x_2,\ldots,x_n)\in \mathbb{R}^n ~\mid~ x_i=x_j,~x_k=x_l,~\textrm{where}~ i,j,k,l ~\text{are pairwise distinct} \}.
$$
By \cite[Proposition 3.2(b)]{MR1386845}, $Y_n$ is an Eilenberg--MacLane space for the pure triplet group $PL_n$. Further, by \cite[Theorem 2.1]{MR1386845}, the space $Y_n$ is homotopy equivalent to a bouquet of circles, and hence $PL_n$ is a free group of finite rank for $n \geq 4$. Since  $PL_n$ is of finite index in $L_n$, the second assertion follows.
\end{proof}

The following problem seems natural.

\begin{problem}
Determine finite index subgroups of  $T_n$ and $L_n$ ($n \ge 4$) which contain no principal congruence subgroups.
\end{problem}
\medskip

\subsection{Congruence subgroups of twin groups}
For $n \ge 3$ and $m \ge 2$, let $T_n[m]:=\ker(\rho_m)$ denote the principal congruence subgroup of $T_n$ of  level $m$. We shall need the following number theoretic observation.

\begin{lemma}\label{pm}
Let $m\geq 3$,  $f_m(Y) = Y^m-1$ and $g(Y) = (Y - 1)^3$. Then $$f_m(Y) = p_m(Y) \mod g(Y),$$ where $$p_m(Y)=\frac{m(m-1)}{2}Y^2-m(m-2)Y+\frac{m(m-3)}{2}.$$
\end{lemma}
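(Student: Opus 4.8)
The plan is to identify the remainder of $f_m(Y)=Y^m-1$ upon division by $g(Y)=(Y-1)^3$ as the degree-$\le 2$ truncation of the Taylor expansion of $f_m$ about $Y=1$. Since $g$ is monic of degree $3$, division in $\mathbb{Z}[Y]$ yields a unique expression $f_m(Y)=q(Y)(Y-1)^3+r(Y)$ with $\deg r\le 2$, and evaluating $f_m$ together with its first two derivatives at $1$ pins down $r$.

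First I would compute the relevant values: $f_m(1)=0$, $f_m'(Y)=mY^{m-1}$ so $f_m'(1)=m$, and $f_m''(Y)=m(m-1)Y^{m-2}$ so $f_m''(1)=m(m-1)$. Hence
$$r(Y)=f_m(1)+f_m'(1)(Y-1)+\tfrac12 f_m''(1)(Y-1)^2=m(Y-1)+\tfrac{m(m-1)}{2}(Y-1)^2,$$
and since $\tfrac{m(m-1)}{2}=\binom{m}{2}\in\mathbb{Z}$, this is an honest integer polynomial, so the congruence is valid in $\mathbb{Z}[Y]$. Equivalently, substituting $Y=1+t$ and applying the binomial theorem gives $(1+t)^m-1\equiv mt+\binom{m}{2}t^2\pmod{t^3}$, which is the same statement with $t=Y-1$.

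The final step is to re-expand $r(Y)$ in powers of $Y$ using $(Y-1)^2=Y^2-2Y+1$ and collect terms: the coefficient of $Y^2$ is $\tfrac{m(m-1)}{2}$, the coefficient of $Y$ is $m-2\binom{m}{2}=m-m(m-1)=-m(m-2)$, and the constant term is $-m+\binom{m}{2}=\tfrac{m^2-3m}{2}=\tfrac{m(m-3)}{2}$, which is precisely $p_m(Y)$. I expect no genuine obstacle here; the only points requiring care are the sign bookkeeping when collecting coefficients and the remark that all coefficients are integers, so that the identity makes sense over $\mathbb{Z}[Y]$ for the applications that follow.
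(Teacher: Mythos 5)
Your proof is correct, and it takes a genuinely different route from the paper. The paper proves the lemma by induction on $m$: it verifies the base case $m=3$ by hand and then, assuming $Y^m-1\equiv p_m(Y)$, multiplies by $Y$ and uses the base-case reduction $Y^3\equiv 3Y^2-3Y+1 \pmod{g(Y)}$ to massage the result into $p_{m+1}(Y)$. You instead identify the remainder directly as the degree-$\le 2$ Taylor truncation of $f_m$ at $Y=1$ (equivalently, substitute $Y=1+t$ and truncate the binomial expansion modulo $t^3$), then re-expand in powers of $Y$. Your approach has the advantage of explaining where the formula for $p_m(Y)$ comes from rather than merely verifying it, and it generalizes immediately to remainders modulo $(Y-1)^k$ for any $k$; the paper's induction is more pedestrian but requires nothing beyond polynomial arithmetic. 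One small point worth keeping in your write-up, which you do address: since $g$ is monic the division takes place in $\mathbb{Z}[Y]$, and the Taylor coefficients $\binom{m}{k}$ are integers, so the congruence is an identity of integer polynomials --- this matters because the lemma is later reduced modulo $m$ and $2m$. Your coefficient bookkeeping ($m-2\binom{m}{2}=-m(m-2)$ and $-m+\binom{m}{2}=\tfrac{m(m-3)}{2}$) checks out.
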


\begin{proof}
The base case $m=3$ is a direct check. Assuming that  $f_m(Y) = p_m(Y) \mod g(Y)$ for $m>3$, we see that
\begin{eqnarray*}
 f_{m+1}(Y) &=& Y^{m+1} - 1\\
&=& \left[Y\left( \frac{m(m-1)}{2}Y^2-m(m-2)Y+\frac{m(m-3)}{2} +1\right) - 1\right] \mod g(Y)\\
&=& \left[ \frac{m(m-1)}{2}Y^3-m(m-2)Y^2+\frac{m(m-3)}{2}Y + Y - 1\right] \mod g(Y)\\
&=& \left[ \frac{m(m-1)}{2}(3Y^2-3Y+1) - m(m-2)Y^2+\frac{m(m-3)}{2}Y + Y - 1\right] \mod g(Y),\\
&& \textrm{using the base case}~ m=3\\
&=& \left[ \frac{m(m+1)}{2}Y^2+ (1-m)(1+m)Y +  \frac{(m+1)(m-2)}{2}\right] \mod g(Y)\\
&=& p_{m+1}(Y) \mod g(Y).
\end{eqnarray*}
This concludes the proof.
\end{proof}

For $n, m \ge 2$, let $W_{n,m}$ be the Coxeter group given by the presentation
$$W_{n,m}=\left\langle u_1,\ldots, u_{n-1} ~\mid~ u_i^2=1, \quad (u_iu_{i+1})^m=1, \quad (u_iu_j)^2=1 \text{ for } |i-j|\geq 2 \right\rangle.$$

\begin{proposition}\label{L1}
The following assertions hold:
\begin{enumerate}
    \item If $m\geq 2$, then the map $\kappa:W_{n,m}\to \GL(n-1, \mathbb Z_{2m})$ given by $\kappa(u_i)=X_i \mod 2m$ is a group homomorphism.
    \item If $m\geq 3$, then the map $\phi:W_{n,m}\to \GL(n-1, \mathbb Z_m)$ given by $\phi(u_i)=X_i \mod m$ is a group homomorphism.
\end{enumerate}
\end{proposition}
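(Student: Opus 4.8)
The plan is to use the universal property of group presentations (von Dyck's theorem): since $W_{n,m}$ has generators $u_1,\dots,u_{n-1}$ and defining relations $u_i^2=1$, $(u_iu_{i+1})^m=1$ and $(u_iu_j)^2=1$ for $|i-j|\ge 2$, it is enough to verify that the matrices $X_1,\dots,X_{n-1}$, reduced modulo $2m$ (respectively modulo $m$), satisfy these same relations in $\GL(n-1,\mathbb Z_{2m})$ (respectively $\GL(n-1,\mathbb Z_m)$). Now $X_i^2=I$ and $X_iX_j=X_jX_i$ for $|i-j|\ge 2$ already hold over $\mathbb Z$, since $\rho\colon T_n\to\GL(n-1,\mathbb Z)$ is a homomorphism and these are among the defining relations of $T_n$; hence they persist modulo every integer. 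Thus the whole statement reduces to the single congruence
\[
(X_iX_{i+1})^m\equiv I\pmod{2m}\qquad\text{for }m\ge 2,
\]
which a fortiori gives $(X_iX_{i+1})^m\equiv I\pmod m$ and so also settles part (2).

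To prove this congruence I would study $C:=X_iX_{i+1}$ through its ``nilpotent part'' $N:=C-I$. Reading off the explicit matrices for $X_i$ and $X_{i+1}$ (equivalently, using \eqref{cij} with $\alpha(i,i+1)=2$), one sees that $C$ agrees with the identity outside the columns indexed by $\{i-1,i,i+1,i+2\}\cap\{1,\dots,n-1\}$, that it stabilizes the plane $U=\langle e_i,e_{i+1}\rangle$ and acts there by the unipotent matrix $\left(\begin{smallmatrix}3&-2\\2&-1\end{smallmatrix}\right)$, and that it induces the identity on $V/U$. Since $\left(\begin{smallmatrix}3&-2\\2&-1\end{smallmatrix}\right)-I=\left(\begin{smallmatrix}2&-2\\2&-2\end{smallmatrix}\right)$ is square-zero and $N(U)=\langle e_i+e_{i+1}\rangle\subseteq\ker N$, one gets $N(V)\subseteq U$ and therefore $N^3=0$. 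Moreover every entry of $N$ is even, so $N=2N'$ for an integer matrix $N'$. The binomial theorem applied to the commuting pair $I,N$ then gives, for $m\ge 2$,
\[
C^m=I+mN+\binom{m}{2}N^2=I+2mN'+2m(m-1)(N')^2=I+2m\bigl(N'+(m-1)(N')^2\bigr),
\]
so $C^m-I$ is $2m$ times an integer matrix. Substituting this back into von Dyck's theorem yields $\kappa$ (for $m\ge 2$) and $\phi$ (for $m\ge 3$) as genuine homomorphisms.

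I do not expect a serious obstacle; the content is the bookkeeping in the displayed computation. The two places that need a little attention are: verifying simultaneously that $N^3=0$ and that the entries of $N$ are even --- cleanest via the structural description of $C$ above, although it can also be extracted from Lemma \ref{prod matrix square} together with one further iteration --- and the boundary indices $i=1$ and $i=n-2$, where one of $e_{i-1},e_{i+2}$ does not exist; in those cases the same computation applies verbatim once the missing basis vector is deleted, so the argument is uniform. (The instance $m=2$ of part (1) is in any case already contained in the proof of Proposition \ref{racg commutator}.)
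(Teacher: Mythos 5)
Your proposal is correct, and although it shares the paper's skeleton --- reduce via von Dyck's theorem to the single relation $(X_iX_{i+1})^m\equiv I \pmod{2m}$, the relations $X_i^2=I$ and $X_iX_j=X_jX_i$ for $|i-j|\ge 2$ already holding over $\mathbb Z$, and exploit $(C-I)^3=0$ --- your handling of the key divisibility is genuinely different and cleaner. The paper first establishes the polynomial congruence $Y^m-1\equiv \frac{m(m-1)}{2}Y^2-m(m-2)Y+\frac{m(m-3)}{2} \pmod{(Y-1)^3}$ by induction on $m$ (Lemma \ref{pm}), which directly yields divisibility by $m$ only when $m$ is odd; to reach the modulus $2m$ in general it then runs a second induction on the $2$-adic valuation of $m$, writing $m=2^jk$ and factoring $Y^m-I=(Y^{m/2}+I)(Y^{m/2}-I)$. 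Your substitution $C=I+N$ with $N=2N'$ and $N^3=0$ gives $C^m-I=mN+\binom{m}{2}N^2=2m\bigl(N'+(m-1)(N')^2\bigr)$ in one stroke, treating all $m\ge 2$ uniformly; in fact, evaluating the paper's polynomial $p_m$ at $Y=I+N$ returns exactly $mN+\binom{m}{2}N^2$, so your binomial computation explains where $p_m$ comes from. The two inputs you need --- that every entry of $N$ is even and that $N^3=0$ --- are immediate from the displayed matrices for $X_iX_{i+1}-I$ (all entries lie in $\{0,\pm 2,4\}$) together with your observation that $N(V)\subseteq\langle e_i,e_{i+1}\rangle$ while $N(e_i+e_{i+1})=0$, and this description is insensitive to the boundary indices $i=1$ and $i=n-2$, as you note. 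One pedantic point: over $\mathbb Z$ the image $N(U)$ is spanned by $2(e_i+e_{i+1})$ rather than by $e_i+e_{i+1}$, but only the containment $N(U)\subseteq\ker N$ is used, so nothing is affected.
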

\begin{proof}
To show that $\kappa$ is a homomorphism, it suffices to show that $(X_iX_{i+1})^m=I$ in $\GL(n-1,\mathbb Z_{2m})$ for each $i$. 
 Note that
\begin{equation*}
X_iX_{i+1} =
\begin{cases}
\left(\begin{array}{ccc|c} 3 & -2 & 4 & 0 \\ 2 & -1 & 2 & 0\\ 0 & 0 & 1 & 0\\ \hline 0 & 0 & 0 & I_{n-4}\end{array}\right) & \text{if}\; i= 1,\\
\\

\left(\begin{array}{c|cccc|c}I_{i-2} & 0 & 0 & 0 & 0 & 0 \\ \hline & 1 & 0 & 0 & 0 & \\ 0 & 2 & 3 & -2 & 4 & 0 \\ & 0 & 2 & -1 & 2 & \\ 0 & 0 & 0 & 0 & 1 & 0\\ \hline 0 & 0 & 0 & 0 & 0 & I_{n-(i+3)}\end{array}\right) & \text{if}\;  2\leq i \leq n-3,\\
\\

\left(\begin{array}{c|ccc}I_{n-4} & 0 & 0 & 0 \\ \hline 0 & 1 & 0 & 0 \\ 0 & 2 & 3 & -2 \\0 & 0 & 2 & -1  \end{array}\right) & \text{if}\; i= n-2,
\end{cases}
\end{equation*}
and 
\begin{equation*}
X_iX_{i+1} - I =
\begin{cases}
\left(\begin{array}{ccc|c} 2 & -2 & 4 & 0 \\ 2 & -2 & 2 & 0\\ 0 & 0 & 0 & 0\\ \hline 0 & 0 & 0 & 0\end{array}\right) & \text{if}\; i= 1,\\
\\

\left(\begin{array}{c|cccc|c}0 & 0 & 0 & 0 & 0 & 0 \\ \hline & 0 & 0 & 0 & 0 & \\ 0 & 2 & 2 & -2 & 4 & 0 \\ & 0 & 2 & -2 & 2 & \\ 0 & 0 & 0 & 0 & 0 & 0\\ \hline 0 & 0 & 0 & 0 & 0 & 0\end{array}\right) & \text{if}\;  2\leq i \leq n-3,\\
\\

\left(\begin{array}{c|ccc}0 & 0 & 0 & 0 \\ \hline 0 & 0 & 0 & 0 \\ 0 & 2 & 2 & -2 \\0 & 0 & 2 & -2  \end{array}\right) & \text{if}\; i= n-2.
\end{cases}
\end{equation*}
Set $Y = X_i X_{i+1}$. It is easy to see that $(Y-I)^3=0$ since  it is the minimal polynomial of $Y$. We claim that $Y^m= I \mod 2m$. Write $m=2^jk$, where $j\geq 0$ and $k\geq 1$ is an odd integer. We use induction on $j$ to prove our claim. First, consider the base case $j=0$. In this case, $m = k$ is odd. By Lemma \ref{pm}, we have $Y^m-I= p_m(Y)$. Since $m$ is odd, it follows that $p_m(Y)=0 \mod m$, and hence $Y^m-I=0 \mod m$. Note that $Y=I \mod 2, $ and therefore $Y^m=I \mod 2$. Since $m$ is odd, we obtain  $Y^m = I \mod 2m$. 
\par

Now assume that the claim holds for $j-1$ for $j>0$. By induction hypothesis, we have $Y^{2^{j-1}k}-I=0 \mod 2 (2^{j-1}k)= m$. Furthermore, we have $(Y^{2^{j-1}k}+I)=0 \mod 2$.  Consequently, it follows that $$Y^m-I=(Y^{2^{j-1}k}+I) (Y^{2^{j-1}k}-I) =0 \mod 2m.$$ This shows that $\kappa$ is well-defined, and hence a homomorphism. Note that $\phi$ is simply the composition of $\kappa$ with the modulo $m$ reduction homomorphism $\GL(n-1, \mathbb Z_{2m})\to \GL(n-1, \mathbb Z_{m})$, and hence is a homomorphism.
 \end{proof}

\begin{corollary}\label{Cor1}
$\llangle \left(s_is_{i+1}\right)^m ~\mid~ 1 \le i \le n-2\rrangle~ \unlhd ~T_n[2m]$ for each $m \ge 2$.
\end{corollary}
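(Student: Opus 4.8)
The plan is to use Proposition \ref{L1}(1) together with the universal property of the presentation of $W_{n,m}$. First I would observe that the Tits representation $\rho : T_n \to \GL(n-1,\mathbb{Z})$ sends $s_i$ to $X_i$, so the level $2m$ congruence homomorphism $\rho_{2m} : T_n \to \GL(n-1,\mathbb{Z}_{2m})$ is given by $\rho_{2m}(s_i) = X_i \bmod 2m$. By Proposition \ref{L1}(1), the assignment $u_i \mapsto X_i \bmod 2m$ extends to a group homomorphism $\kappa : W_{n,m} \to \GL(n-1,\mathbb{Z}_{2m})$. Comparing the presentations of $T_n$ and $W_{n,m}$, the natural surjection $\pi : T_n \twoheadrightarrow W_{n,m}$ (adding the relations $(u_iu_{i+1})^m = 1$) satisfies $\rho_{2m} = \kappa \circ \pi$, since both homomorphisms agree on the generators $s_i$.

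Next I would identify the normal closure in question. Let $N := \llangle (s_is_{i+1})^m \mid 1 \le i \le n-2 \rrangle$ be the normal subgroup of $T_n$ generated by the elements $(s_is_{i+1})^m$. By the von Dyck presentation, $\ker(\pi) = N$, because $W_{n,m}$ is obtained from $T_n$ precisely by killing the elements $(s_is_{i+1})^m$. Then for any $x \in N = \ker(\pi)$ we have $\rho_{2m}(x) = \kappa(\pi(x)) = \kappa(1) = I$, so $x \in \ker(\rho_{2m}) = T_n[2m]$. This gives the inclusion $N \le T_n[2m]$, and since $N$ is normal in $T_n$ it is in particular a normal subgroup contained in $T_n[2m]$, which is exactly the claimed statement $N \unlhd T_n[2m]$. (One should note $N$ is normal in $T_n$, hence also normal in the intermediate subgroup $T_n[2m]$, so the notation $\unlhd$ is justified.)

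The only real content here is already packaged in Proposition \ref{L1}, whose proof does the matrix computation showing $(X_iX_{i+1})^m = I \bmod 2m$; given that, this corollary is essentially a formal consequence of comparing presentations. Thus I expect no genuine obstacle — the main point to state carefully is the identification $\ker(\pi) = N$ via the universal property of the Coxeter presentation of $T_n$ versus that of $W_{n,m}$, so that the factorization $\rho_{2m} = \kappa \circ \pi$ can be invoked cleanly.
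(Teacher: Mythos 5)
Your proposal is correct and follows essentially the same route as the paper: factor $\rho_{2m}$ through the projection $\pi_m\colon T_n\to W_{n,m}$ using Proposition \ref{L1}(1), and observe that the normal closure of the elements $(s_is_{i+1})^m$ lies in $\ker(\pi_m)\subseteq\ker(\rho_{2m})=T_n[2m]$. The only (harmless) extra detail you add is the exact identification $\ker(\pi_m)=N$ via von Dyck, where the paper only needs the inclusion $N\le\ker(\pi_m)$.
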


\begin{proof}
Let $\pi_m: T_n\to W_{n,m}$ denote the natural projection $s_i \mapsto u_i$. Since the diagram
\[
\begin{tikzcd}
 {T_n} \arrow{d}{\pi_m} \arrow{rd}{\rho_{2m}} &\\ 
W_{n,m} \arrow{r}{\kappa} & \GL(n-1, \mathbb Z_{2m}).
\end{tikzcd}
\]
commutes, the assertion follows immediately.
\end{proof}

\begin{proposition}
The following  assertions hold:
\begin{enumerate}
    \item $\kappa:W_{n,m}\to \GL(n-1, \mathbb Z_{2m})$ is injective if and only if $m=2,3$.
    \item $\phi:W_{n,m}\to \GL(n-1, \mathbb Z_m)$ is injective if and only if $m=3$.
\end{enumerate}
\end{proposition}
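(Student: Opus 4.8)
The plan is to treat the three ranges $m=2$, $m=3$ and $m\ge 4$ separately, exploiting the identifications $W_{n,2}\cong\mathbb{Z}_2^{n-1}$ (adjoining $(u_iu_{i+1})^2=1$ forces all generators to commute) and $W_{n,3}\cong S_n$ (the relations $(u_iu_{i+1})^3=1$ are the braid relations), together with the fact that $W_{n,m}$ is infinite as soon as $m\ge 4$.

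First I would dispose of $m=2$. Under the identification $W_{n,2}\cong\mathbb{Z}_2^{n-1}$ the projection $\pi_2\colon T_n\to W_{n,2}$ becomes the abelianisation map, so $\kappa$ is precisely the homomorphism $\psi\colon\mathbb{Z}_2^{n-1}\to\GL(n-1,\mathbb{Z}_4)$ of Proposition~\ref{racg commutator} (taken with $W=T_n$); since $\psi$ is injective, so is $\kappa$. For $\phi$, every entry of $X_i-I$ is even, so $\phi(u_i)=X_i\bmod 2=I$ for every $i$, whence $\ker\phi=W_{n,2}\ne 1$ and $\phi$ is not injective. The range $m\ge 4$ is equally soft: the standard parabolic of $W_{n,m}$ on $\{u_1,u_2,u_3\}$ is the rank-three Coxeter group whose diagram is a path both of whose edges are labelled $m$; by the classification of finite Coxeter groups---or directly, since its Tits form has Gram determinant $1-2\cos^2(\pi/m)\le 0$ and so is not positive definite---it is infinite for $m\ge 4$, hence so is $W_{n,m}$ (this is the point where the hypothesis $n\ge 4$ is needed, as $W_{3,m}$ is the finite dihedral group $D_m$). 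Since $\GL(n-1,\mathbb{Z}_{2m})$ and $\GL(n-1,\mathbb{Z}_m)$ are finite, the kernels of $\kappa$ and $\phi$ are infinite, so neither map is injective.

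The substance of the proof is the case $m=3$, where $W_{n,3}\cong S_n$ and, by Proposition~\ref{L1}, both $\ker\kappa$ and $\ker\phi$ are normal subgroups of $S_n$. I would eliminate each nontrivial normal subgroup using the classification of normal subgroups of $S_n$. The alternating subgroup $A_n$, and hence $S_n$, is excluded because $\kappa$ and $\phi$ are not constant on the Coxeter generators---already $X_1$ and $X_2$ differ modulo $6$ and modulo $3$---so neither map factors through $S_n/A_n\cong\mathbb{Z}_2$, while both are visibly nontrivial. For $n\ge 5$ this already forces $\ker\kappa=\ker\phi=1$, and the only extra work is the exceptional case $n=4$, where the Klein four-group $V_4\unlhd S_4$ has to be excluded as well: here $u_1u_3$ represents a nontrivial class of $V_4$, while a one-line computation gives $X_1X_3\ne I$ modulo $6$ and modulo $3$, so $V_4$ lies in neither kernel. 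Collecting the three ranges yields the asserted equivalences.

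I expect the main obstacle to be bookkeeping rather than anything deep: correctly pinning down exactly when $W_{n,m}$ is infinite---so that the ``infinite source, finite target'' argument applies---and not overlooking the small-rank exception $n=4$ in the $m=3$ analysis. Everything else reduces to a handful of explicit $2\times 2$ and $3\times 3$ matrix products reduced modulo $3$ and $6$.
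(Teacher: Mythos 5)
Your proof is correct and follows essentially the same route as the paper: the case $m=2$ via the injective map $\psi$ from the proof of Proposition~\ref{racg commutator}, the case $m=3$ via the identification $W_{n,3}\cong S_n$ together with the classification of normal subgroups of $S_n$, and the case $m\ge 4$ via the infiniteness of $W_{n,m}$ mapping to a finite target. The only differences are cosmetic --- a Gram-determinant computation in place of citing the classification of finite irreducible Coxeter groups, and ruling out $A_n$ and $V_4$ separately rather than using the paper's single element $(12)(n-1,n)$ --- and your explicit remark that $n\ge 4$ is needed for the $m\ge 4$ range (since $W_{3,m}$ is finite dihedral) is a hypothesis the paper leaves implicit.
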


\begin{proof}
Observe that $W_{n,2} \cong \mathbb Z_2^{n-1}$.  Taking $W=T_n$, the proof of Proposition \ref{racg commutator} shows that the map  $\kappa:W_{n,2}\to \GL(n-1, \mathbb Z_{4})$ is the same as the map $\psi:   \mathbb Z_2^{n-1} \to  \GL(n-1, \mathbb Z_{4})$, and hence $\kappa$ is injective. 
\par
Note that $W_{n,3}\cong S_n$. It is well-known that $A_n$ is the only non-trivial proper normal subgroup of $S_n$ for $n \ne 4$, while $A_4$ and $\{e, (12)(34), (13)(24), (14)(23)\}$ are non-trivial proper normal subgroups of $S_4$. Since the permutation $(12)(n-1, n)$ lies in each non-trivial normal subgroup of $S_n$ and $\kappa((12)(n-1, n)) \neq I$, it follows that $\kappa:W_{n,3}\to \GL(n-1, \mathbb Z_{6})$ is injective. The same reasoning shows that $\phi$ is injective for $m=3$.
\par
Finally, the classification of finite irreducible Coxeter groups implies that $W_{n,m}$ is an infinite group  for $m\geq 4$, and hence neither $\kappa$ nor $\phi$ can be injective in this case.
\end{proof}

\begin{proposition}\label{tn3=ptn}
$T_n[3]=T_n[6] =PT_n$.
\end{proposition}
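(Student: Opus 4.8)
The plan is to identify $PT_n$ with a kernel that Proposition~\ref{L1} is tailor‑made to control, and then to invoke the injectivity of the auxiliary maps $\phi$ and $\kappa$ at $m=3$. First I would observe that adjoining the relations $(s_is_{i+1})^3=1$ to the Coxeter presentation of $T_n$ produces exactly the presentation of $W_{n,3}$, and that $W_{n,3}$ is the standard Coxeter presentation of $S_n$; under this identification the projection $\pi_3\colon T_n\to W_{n,3}$, $s_i\mapsto u_i$, is precisely the canonical surjection $T_n\to S_n$, $s_i\mapsto(i,i+1)$. Hence, by the very definition of the pure twin group,
$$PT_n=\ker(\pi_3)=\llangle (s_is_{i+1})^3 \mid 1\le i\le n-2\rrangle.$$

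For the equality $T_n[3]=PT_n$ I would use the commuting triangle already exploited in the proof of Corollary~\ref{Cor1}. On generators, $\rho_3(s_i)=X_i\bmod 3=\phi(\pi_3(s_i))$, where $\phi\colon W_{n,3}\to\GL(n-1,\mathbb Z_3)$ is the homomorphism of Proposition~\ref{L1}(2); since both sides are group homomorphisms agreeing on a generating set, $\rho_3=\phi\circ\pi_3$ on all of $T_n$, and therefore $T_n[3]=\ker(\rho_3)=\pi_3^{-1}(\ker\phi)$. The injectivity of $\phi$ for $m=3$, established just above the present proposition (using $W_{n,3}\cong S_n$ and the fact that every non‑trivial normal subgroup of $S_n$ contains $(12)(n-1,n)$, which $\phi$ does not send to the identity), gives $\ker\phi=1$, whence $T_n[3]=\ker(\pi_3)=PT_n$.

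It remains to treat $T_n[6]$. One inclusion is free: since $3\mid 6$ we have $T_n[6]\le T_n[3]=PT_n$. For the reverse inclusion, Corollary~\ref{Cor1} with $m=3$ gives $\llangle (s_is_{i+1})^3 \mid 1\le i\le n-2\rrangle\unlhd T_n[6]$, and the left‑hand side is exactly $PT_n$ by the first paragraph, so $PT_n\le T_n[6]$; combining the two inclusions yields $T_n[6]=PT_n$. (Alternatively, one may argue as at level $3$, using $\rho_6=\kappa\circ\pi_3$ and the injectivity of $\kappa\colon W_{n,3}\to\GL(n-1,\mathbb Z_6)$.) I do not expect a genuine obstacle here: all the substantive work has already been front‑loaded into Proposition~\ref{L1} and the injectivity statement for $m=3$, so the only points requiring a moment's care are the bookkeeping identification $\ker(\pi_3)=PT_n=\llangle(s_is_{i+1})^3\rrangle$ and the remark that $\rho_3$ and $\rho_6$ factor through $\pi_3$ — both immediate once $\phi$ and $\kappa$ are known to be homomorphisms.
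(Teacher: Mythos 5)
Your proposal is correct and follows essentially the same route as the paper: identify $\ker(\pi_3)=PT_n$ via $W_{n,3}\cong S_n$, factor $\rho_3$ and $\rho_6$ through $\pi_3$, and use the injectivity of $\phi$ and $\kappa$ at $m=3$. Your handling of $T_n[6]$ via the sandwich $PT_n\le T_n[6]\le T_n[3]=PT_n$ (using Corollary~\ref{Cor1} for the first inclusion) is a harmless minor variation, and you already note the paper's $\kappa$-injectivity argument as the alternative.
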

\begin{proof}
Since $\phi:W_{n,3}\cong S_n\to \GL(n-1, \mathbb Z_3)$ is injective and the diagram
\[
\begin{tikzcd}
 {T_n} \arrow{d}{\pi} \arrow{rd}{\rho_3} &\\ 
 S_n \arrow{r}{\phi} & \GL(n-1, \mathbb Z_3)
\end{tikzcd}
\]
commutes, it follows that $\ker(\rho_3)= \ker(\pi)$. Similarly, since the diagram
\[
\begin{tikzcd}
 {T_n} \arrow{d}{\pi_3} \arrow{rd}{\rho_6} &\\ 
W_{n,3} \arrow{r}{\kappa} & \GL(n-1, \mathbb Z_6)
\end{tikzcd}
\]
commutes with $\kappa$ being injective, it follows that $T_n[6]=\ker (\rho_6)= \ker (\pi_3)= PT_n$.
\end{proof}

Corollary \ref{prin cong torsion free} and Proposition \ref{tn3=ptn} recover the following result of \cite[Theorem 3]{MR4027588}.

\begin{corollary}
$PT_n$ is torsion free for $n \ge 3$.
\end{corollary}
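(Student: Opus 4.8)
The plan is to combine the two results just established, since the corollary is an immediate consequence of them. For $n \ge 3$ the twin group $T_n$ is a small Coxeter group with generating set $\{s_1, \ldots, s_{n-1}\}$ of size $n-1 \ge 2$, so Corollary \ref{prin cong torsion free} applies and tells us that the principal congruence subgroup $T_n[m]$ is torsion free for every $m \ge 3$. In particular $T_n[3]$ is torsion free. On the other hand, Proposition \ref{tn3=ptn} identifies this subgroup with the pure twin group, namely $PT_n = T_n[3]$. Therefore $PT_n$ is torsion free, which is the assertion of \cite[Theorem 3]{MR4027588}.

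Concretely, I would write: ``By Proposition \ref{tn3=ptn} we have $PT_n = T_n[3]$, and by Corollary \ref{prin cong torsion free}, applied to the small Coxeter group $W = T_n$ with $|S| = n-1 \ge 2$ and the level $m = 3$, the subgroup $T_n[3]$ is torsion free. Hence $PT_n$ is torsion free for all $n \ge 3$.'' One could equally invoke $T_n[6] = PT_n$ and $m = 6$; both give the same conclusion, so there is nothing to choose between them.

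There is essentially no obstacle here: the work was done upstream. The only thing to be careful about is the hypothesis bookkeeping — checking that $n \ge 3$ guarantees $|S| = n-1 \ge 2$ so that Corollary \ref{prin cong torsion free} is legitimately applicable (for $n = 2$, $T_2 \cong \mathbb{Z}_2$ is finite and $PT_2$ is trivial, which is consistent with the statement but outside the range $n \ge 3$), and that the level used ($3$ or $6$) indeed satisfies $m \ge 3$. After that the proof is a two-line deduction.
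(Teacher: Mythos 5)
Your proof is correct and is exactly the paper's argument: the authors state the corollary by noting that Corollary \ref{prin cong torsion free} and Proposition \ref{tn3=ptn} together recover it, which is the same two-line deduction ($PT_n = T_n[3]$ and $T_n[3]$ torsion free for level $3 \ge 3$) that you give. The hypothesis bookkeeping you mention ($|S| = n-1 \ge 2$ for $n \ge 3$) is also exactly what is needed.
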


\begin{remark}
Let $W= \langle S \mid R \rangle$, where $|S|=n-1$ and $n \ge 3$. Suppose that $\rho_k: W \to \GL(n-1,\mathbb Z_k)$ is not surjective. If $k \mid m$, then $\rho_k$ can be seen as the composition $$W\stackrel{\rho_m}{\longrightarrow}  \GL(n-1,\mathbb Z_m) \longrightarrow \GL(n-1,\mathbb Z_k).$$ Since $\rho_k$ is not surjective, so is $\rho_m$. Specialising to twin groups, note that $T_n[2]=T_n$. Thus, $\rho_2:T_n\to \GL(n-1,\mathbb Z_2)$ is not surjective, and hence $\rho_m$ is not surjective when $m$ is even. By Proposition \ref{tn3=ptn},  $\ker(\rho_3)=PT_n$, and hence $\im(\rho_3)\cong S_n$. Since $|S_n|<|\GL(n-1,\mathbb Z_3)|$, it follows that $\rho_3:T_n\to \GL(n-1,\mathbb Z_3)$ is not surjective, and hence $\rho_m$ is not surjective when $m$ is a multiple of $3$. 
\end{remark}

\begin{proposition}
$T_3[m] \cong \mathbb Z$ for each $m\geq 3$. More precisely,
$$
\begin{aligned}
T_3[m]  &=\left\{\begin{array}{ll}
 \langle (s_1s_2)^m \rangle & ~\text {if} ~m~\textrm{is odd},\\
\langle (s_1s_2)^{m/2}\rangle  & ~\text {if} ~m~\textrm{is even}.
\end{array}\right.
\end{aligned}
$$
\end{proposition}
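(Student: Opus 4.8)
The plan is to exploit the fact that $T_3 \cong \mathbb{Z}_2 * \mathbb{Z}_2$ is the infinite dihedral group. In $T_3$ every element has a normal form $(s_1s_2)^k$ or $s_1(s_1s_2)^k$ with $k \in \mathbb{Z}$, and the cyclic subgroup $\langle s_1 s_2 \rangle$ of ``rotations'' is precisely the set $E(T_3)$ of even-length words; in particular it is infinite cyclic since $s_1s_2$ has infinite order. By Lemma \ref{length}, every element of $T_3[m]$ is an even-length word for $m \ge 3$, hence $T_3[m] \le \langle s_1 s_2 \rangle$. (Equivalently, $\det \rho\big(s_1(s_1s_2)^k\big) = -1 \not\equiv 1 \pmod m$ for $m \ge 3$, so no ``reflection'' lies in $\ker(\rho_m)$.) It therefore remains only to decide which powers of $s_1s_2$ lie in $\ker(\rho_m)$.

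The next step is to compute $A := \rho(s_1 s_2) = X_1 X_2 = \left(\begin{smallmatrix} 3 & -2 \\ 2 & -1 \end{smallmatrix}\right)$ and to observe that $A - I = \left(\begin{smallmatrix} 2 & -2 \\ 2 & -2 \end{smallmatrix}\right)$ satisfies $(A-I)^2 = 0$, i.e.\ $A$ is unipotent. Consequently $A^k = I + k(A-I)$ for every $k \in \mathbb{Z}$, so $\rho\big((s_1s_2)^k\big) = \left(\begin{smallmatrix} 1+2k & -2k \\ 2k & 1-2k \end{smallmatrix}\right)$. This collapses the membership condition $\rho_m\big((s_1s_2)^k\big) = I$ to the single congruence $2k \equiv 0 \pmod m$.

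The remainder is elementary number theory: if $m$ is odd then $\gcd(2,m)=1$, so $2k \equiv 0 \pmod m$ forces $m \mid k$ and thus $T_3[m] = \langle (s_1s_2)^m\rangle$; if $m$ is even, the congruence is equivalent to $(m/2)\mid k$, giving $T_3[m] = \langle (s_1s_2)^{m/2}\rangle$. In both cases $T_3[m]$ is infinite cyclic, being a nontrivial subgroup of $\langle s_1 s_2\rangle \cong \mathbb{Z}$; one may note that the case $m=4$ recovers $T_3[4] = T_3^{'} = \langle (s_1s_2)^2\rangle$ of Proposition \ref{racg commutator}. There is no real obstacle in this argument; the only step needing a little care is the reduction to the rotation subgroup, which Lemma \ref{length} (or the determinant computation) supplies cleanly, after which the unipotence of $A$ trivialises the power computation.
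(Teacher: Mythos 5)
Your proposal is correct and follows essentially the same route as the paper: reduce to the even-length (rotation) subgroup $\langle s_1s_2\rangle$ via Lemma \ref{length}, compute $\rho\big((s_1s_2)^k\big)=\left(\begin{smallmatrix}1+2k & -2k\\ 2k & 1-2k\end{smallmatrix}\right)$, and solve $2k\equiv 0 \pmod m$. The only cosmetic difference is that you obtain the matrix power from the unipotence of $\rho(s_1s_2)$ rather than by induction on $k$, and you bypass the paper's appeal to the classification of normal subgroups of $D_\infty$.
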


\begin{proof}
Note that $T_3 \cong D_\infty$, the infinite dihedral group. It is easy to see that the only non-trivial normal subgroups of $T_3$ are $T_3$,  $\llangle s_1  \rrangle$, $\llangle s_2 \rrangle$ and $\langle (s_1s_2)^k \rangle$ for $k\geq 1$. It follows from Lemma \ref{length} that $T_3[m] = \langle (s_1s_2)^k \rangle \cong \mathbb{Z}$ for some $k\geq 1$. Recall that  
$$\rho(s_1) = \begin{bmatrix}
-1 & 2 \\
0 & 1
\end{bmatrix},
\quad
 \rho(s_2) =
\begin{bmatrix}
1 & 0 \\
2 & -1 
\end{bmatrix}$$
and hence
 $$\rho(s_1s_2) = \begin{bmatrix}
3 & -2 \\
2 & -1 
\end{bmatrix}.$$
Induction on $k$ shows that $$\rho((s_1s_2)^k) \; = \;\;
\begin{bmatrix}
2k+1 & -2k \\
2k & 1-2k
\end{bmatrix}$$ for all $k\geq 1$.  Noting that $\rho((s_1s_2)^k)$ is the identity modulo $m$ if and only if $m$ divides $2k$, the last assertion is now immediate.
\end{proof}

\begin{corollary}
$T_3$ has the congruence subgroup property.
\end{corollary}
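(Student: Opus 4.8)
The plan is to show that every finite index subgroup of $T_3 \cong D_\infty$ contains some principal congruence subgroup $T_3[m]$, using the explicit description of $T_3[m]$ obtained in the previous proposition. The key structural input is that the normal subgroups of $D_\infty$ are completely classified: they are $T_3$ itself, the two conjugacy-closed "reflection" subgroups $\llangle s_1 \rrangle$ and $\llangle s_2 \rrangle$, and the cyclic "rotation" subgroups $\langle (s_1 s_2)^k\rangle$ for $k \ge 1$. Among these, the only ones of finite index are $T_3$, $\llangle s_1 \rrangle$, $\llangle s_2 \rrangle$, and $\langle (s_1 s_2)^k \rangle$ for every $k \ge 1$.

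First I would recall that the previous proposition gives $T_3[m] = \langle (s_1 s_2)^m \rangle$ for $m$ odd and $T_3[m] = \langle (s_1 s_2)^{m/2}\rangle$ for $m$ even; in particular, for every integer $k \ge 1$ one has $\langle (s_1 s_2)^k \rangle = T_3[2k] \supseteq T_3[2k]$, so each rotation subgroup is itself a principal congruence subgroup (and hence trivially a congruence subgroup). It remains to handle an arbitrary finite index subgroup $N$, which need not be normal. I would argue that any finite index subgroup $N \le D_\infty$ of index $d$ contains its normal core, a normal subgroup of finite index; by the classification above, this core is either all of $T_3$, one of $\llangle s_i \rrangle$, or some $\langle (s_1 s_2)^k\rangle$. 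In the first case $N = T_3 \supseteq T_3[3]$. In the last case $N \supseteq \langle (s_1 s_2)^k \rangle = T_3[2k]$, and we are done.

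The remaining possibility is that the core of $N$ is $\llangle s_1 \rrangle$ or $\llangle s_2 \rrangle$; but these subgroups are infinite and of infinite index in $D_\infty$ (they are infinitely generated, being the normal closure of a single reflection, with quotient $D_\infty / \llangle s_i \rrangle \cong \mathbb{Z}_2$... wait — in fact $D_\infty/\llangle s_1\rrangle \cong \mathbb Z_2$ is finite). Let me instead argue directly: a finite index subgroup $N$ of $D_\infty$ contains $[D_\infty, D_\infty] \cap (\text{something})$ — more cleanly, $D_\infty$ is virtually $\mathbb Z$, its unique maximal infinite cyclic normal subgroup being $R = \langle s_1 s_2 \rangle$, and any finite index subgroup $N$ must intersect $R$ in a finite index subgroup $R \cap N = \langle (s_1 s_2)^k \rangle$ for some $k \ge 1$ (since $R \cong \mathbb Z$). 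Then $\langle (s_1 s_2)^k \rangle = T_3[2k] \le N$, so $N$ is a congruence subgroup. This is the cleanest route and avoids the case analysis on cores entirely.

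The only genuine obstacle is making sure the intersection $R \cap N$ is nontrivial, i.e., that $N$ does not consist entirely of reflections together with the identity — but a subgroup generated by reflections in $D_\infty$ that contains two distinct reflections already contains their product, a nontrivial rotation, and a finite index subgroup cannot be contained in the (infinite index, in fact the complement has order $2$, so this needs care) set of reflections since the reflections do not form a subgroup; concretely, $|N \cap R| = \infty$ because $[D_\infty : R] = 2$ forces $[N : N \cap R] \le 2$, so $N \cap R$ is finite index in $N$, hence infinite, hence equals $\langle (s_1 s_2)^k\rangle$ for some $k \ge 1$. Thus $T_3[2k] = \langle (s_1 s_2)^k \rangle \le N$, completing the proof.
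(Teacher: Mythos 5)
Your argument is correct and follows essentially the same route as the paper: both reduce to the observation that a finite-index subgroup of $T_3\cong D_\infty$ must contain $\langle (s_1s_2)^k\rangle$ for some $k\ge 1$ (your intersection with the index-two rotation subgroup $R=\langle s_1s_2\rangle$ is just the justification of the paper's asserted classification of finite-index subgroups), and that such a subgroup contains a principal congruence subgroup. One small slip: the identity $\langle (s_1s_2)^k\rangle = T_3[2k]$ holds only for $k\ge 2$, since the preceding proposition requires $m\ge 3$ and in fact $T_3[2]=T_3$; for $k=1$ you should instead note $\langle s_1s_2\rangle \supseteq \langle (s_1s_2)^2\rangle = T_3[4]$, which is all the congruence subgroup property requires.
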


\begin{proof}
It is easy to see that any finite index subgroup of $T_3$ is of the form $\langle(s_1s_2)^k\rangle$ or $\langle (s_1s_2)^k, w \rangle$ for some $k\in \mathbb Z$ and some word $w \in T_3$ of odd length. In each case, it contains a principal congruence subgroup of $T_3$.
\end{proof}

Let $E(T_n)$ be the index two subgroup of $T_n$ consisting of all words of even lengths.

\begin{proposition}\label{propgcd}
Let $m, k \ge 3$ be co-prime integers. Then the following  assertions hold:
\begin{enumerate}
\item $T_n[mk]=T_n[m]\cap T_n[k]$.
\item  $T_n[m]~ T_n[k]=E(T_n)$. 
\item $T_n[m]/T_n[mk]\cong E(T_n)/T_n[k]$.
\item $E(T_n)/T_n[mk]\cong E(T_n)/T_n[m]\times E(T_n)/T_n[k]$.
\end{enumerate}
\end{proposition}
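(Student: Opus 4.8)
The plan is to extract the arithmetic content of the proposition into part (2) alone, deduce (1), (3), (4) from it by formal group theory, and prove (2) by routing $\rho_m$ through the intermediate Coxeter quotient $W_{n,m}$ of Proposition~\ref{L1}. For (1), since $\gcd(m,k)=1$ an integer matrix is congruent to the identity modulo $mk$ iff it is congruent to the identity modulo $m$ and modulo $k$ (Chinese Remainder Theorem, applied entrywise); applying this to $\rho(w)$ for $w\in T_n$ yields $T_n[mk]=T_n[m]\cap T_n[k]$.

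Granting (2), parts (3) and (4) are bookkeeping. By Lemma~\ref{length} both $T_n[m]$ and $T_n[k]$ are normal subgroups of $T_n$ contained in $E(T_n)$, so $T_n[m]T_n[k]$ is a subgroup of $E(T_n)$, and the second isomorphism theorem gives
\[T_n[m]/T_n[mk]=T_n[m]/\big(T_n[m]\cap T_n[k]\big)\cong T_n[m]T_n[k]/T_n[k]=E(T_n)/T_n[k],\]
using (1) for the first equality and (2) for the last. For (4), the natural map $\theta\colon E(T_n)/T_n[mk]\to E(T_n)/T_n[m]\times E(T_n)/T_n[k]$, $gT_n[mk]\mapsto(gT_n[m],gT_n[k])$, is well defined since $T_n[mk]\le T_n[m]\cap T_n[k]$; its kernel is $(T_n[m]\cap T_n[k])/T_n[mk]$, which is trivial by (1); and it is surjective because for normal subgroups $A,B$ of a group $G$ the canonical map $G\to G/A\times G/B$ is onto precisely when $AB=G$, which here is exactly (2). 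Hence $\theta$ is an isomorphism.

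It therefore remains to prove (2): $T_n[m]T_n[k]=E(T_n)$. The inclusion $\subseteq$ was noted above. For $\supseteq$ I would show that $\rho_k$ already carries $T_n[m]$ onto all of $\rho_k(E(T_n))=E(T_n)/T_n[k]$: then any $g\in E(T_n)$ factors as $g=h\cdot(h^{-1}g)$ with $h\in T_n[m]$ chosen so that $\rho_k(h)=\rho_k(g)$, whence $h^{-1}g\in T_n[k]$ and $g\in T_n[m]T_n[k]$. To see $\rho_k(T_n[m])=\rho_k(E(T_n))$, let $\pi_m\colon T_n\to W_{n,m}$ be the projection $s_i\mapsto u_i$. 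Since $\rho_m=\phi_m\circ\pi_m$ we get $T_n[m]=\pi_m^{-1}(\ker\phi_m)\supseteq\ker\pi_m=\llangle(s_is_{i+1})^m\mid 1\le i\le n-2\rrangle$, and this normal closure lies in $E(T_n)$ because each $(s_is_{i+1})^m$ has even length and conjugation respects the parity map $T_n\to\mathbb Z_2$. Now $\pi_k(\ker\pi_m)=\llangle(u_iu_{i+1})^m\mid 1\le i\le n-2\rrangle$ inside $W_{n,k}$, and adjoining the relations $(u_iu_{i+1})^m=1$ to the presentation of $W_{n,k}$, which already enforces $(u_iu_{i+1})^k=1$, forces each $u_iu_{i+1}$ to have order dividing $\gcd(m,k)=1$; hence all $u_i$ collapse to a single involution, so $W_{n,k}/\pi_k(\ker\pi_m)\cong\mathbb Z_2$ via the parity map and $\pi_k(\ker\pi_m)=E(W_{n,k})$. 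Applying $\phi_k$ and using $\pi_k(E(T_n))=E(W_{n,k})$ (so $\phi_k(E(W_{n,k}))=\rho_k(E(T_n))$), we obtain $\rho_k(T_n[m])\supseteq\rho_k(\ker\pi_m)=\phi_k(\pi_k(\ker\pi_m))=\rho_k(E(T_n))$, hence equality.

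The step I expect to need the most care is the collapse inside $W_{n,k}$: one must confirm that imposing $(u_iu_{i+1})^m=1$ on $W_{n,k}$ genuinely identifies all the generators — this is the sole place where $\gcd(m,k)=1$ is used — so that $\pi_k(\ker\pi_m)$ has index exactly two and equals $E(W_{n,k})$; after that the argument is just chasing the commuting triangle $\rho_k=\phi_k\circ\pi_k$ of Proposition~\ref{L1}. As a sanity check, for $n=3$ one has $E(T_3)\cong\mathbb Z$ and the whole statement degenerates to $\mathbb Z_m\times\mathbb Z_k\cong\mathbb Z_{mk}$.
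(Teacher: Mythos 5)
Your argument is correct, and parts (1), (3) and (4) are handled exactly as in the paper: (1) is the entrywise Chinese Remainder observation, (3) is the second isomorphism theorem applied to the normal subgroups $T_n[m]$, $T_n[k]$, and (4) is the standard fact that $G\to G/A\times G/B$ is an isomorphism onto when $A\cap B$ is the kernel and $AB=G$. The genuine divergence is in part (2), the only place with content. The paper argues directly on generators: $E(T_n)$ is generated by the elements $s_is_{i+1}$, Corollary~\ref{Cor1} gives $(s_is_{i+1})^m\in T_n[2m]\le T_n[m]$ and $(s_is_{i+1})^k\in T_n[k]$, and B\'ezout ($am+bk=1$) yields $s_is_{i+1}=\bigl((s_is_{i+1})^m\bigr)^a\bigl((s_is_{i+1})^k\bigr)^b\in T_n[m]\,T_n[k]$. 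You instead show $\rho_k(T_n[m])=\rho_k(E(T_n))$ by noting $T_n[m]\supseteq\ker\pi_m=\llangle(s_is_{i+1})^m\rrangle$ and that imposing $(u_iu_{i+1})^m=1$ on $W_{n,k}$ collapses it to $\mathbb Z_2$, so $\pi_k(\ker\pi_m)=E(W_{n,k})$; the coprimality enters through $\gcd(m,k)=1$ forcing $u_iu_{i+1}=1$ rather than through B\'ezout coefficients, though these are of course the same arithmetic fact in different clothing. Your route is sound (the collapse argument is right, and the factorisation $g=h\cdot(h^{-1}g)$ correctly converts surjectivity of $\rho_k|_{T_n[m]}$ onto $\rho_k(E(T_n))$ into the product decomposition), and it has the mild advantage of not needing an explicit generating set for $E(T_n)$; the cost is that it leans on the well-definedness of $\phi_k$ from Proposition~\ref{L1}(2) (hence on $k\ge 3$) and on a presentation-level computation in $W_{n,k}$, where the paper's two-line B\'ezout manipulation suffices.
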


\begin{proof}
Let $X$ be a matrix. Then $X=I \mod m$ and $X=I \mod k$ if and only if $X =I \mod mk$. Thus, we have $T_n[mk]=T_n[m]\cap T_n[k]$, which is assertion (1).
\par
By Lemma \ref{length}, we already have $T_n[m]~ T_n[k]\le E(T_n)$. Note that $E(T_n)$ is generated by $\{s_1 s_2, s_2s_3, \ldots, s_{n-2}s_{n-1}\}$. Since $m$ and $k$ are co-prime, there exist integers $a$ and $b$ such that $am+bk=1$. Using Corollary \ref{Cor1}, we see that
$$s_is_{i+1}= (s_is_{i+1})^{am+bk}  =\left((s_is_{i+1})^m\right)^a\left((s_is_{i+1})^k\right)^b \in T_n[m]~ T_n[k],$$ which proves assertion (2). Assertion (3) follows from (1) and (2). If $M$ and $N$ are normal subgroups of a group $G$ such that $G=MN$, then $G/(N\cap M) \cong (G/M) \times (G/N)$. Using this fact together with  (1) and (2) gives $$E(T_n)/T_n[mk]= E(T_n)/\left(T_n[m]\cap T_n[k]\right)\cong (E(T_n)/T_n[m]) \times (E(T_n)/T_n[k]),$$ which is assertion (4).
\end{proof}

\begin{theorem}\label{quotient twin cong subgroup}
The following  assertions hold:
\begin{enumerate}
\item If $3 \nmid m$ and $A_n$ is the alternating group, then $$T_n[m]/T_n[3m]\cong A_n.$$
\item If $m$ is odd and $H$ is the subgroup of $W_{n, 2}$ consisting of words of even lengths, then $$T_n[m]/T_n[4m]\cong H.$$
\item If $m$ is odd and $3 \nmid m$, then $$T_n[m]/T_n[12m]\cong A_n\times H.$$
\end{enumerate}
\end{theorem}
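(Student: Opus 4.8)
The plan is to reduce all three parts to Proposition~\ref{propgcd}, together with the identifications $T_n[3]=PT_n$ (Proposition~\ref{tn3=ptn}) and $T_n[4]=T_n'$ (Proposition~\ref{racg commutator}, since $T_n$ is right-angled), and the elementary fact that the index-two subgroup $E(T_n)$ of even-length words is at the same time the preimage of the alternating group $A_n$ under the canonical surjection $T_n\twoheadrightarrow S_n$ and the preimage of $H$ under the abelianisation $T_n\twoheadrightarrow T_n^{\mathrm{ab}}\cong W_{n,2}\cong\mathbb{Z}_2^{n-1}$. This holds because both the sign map $S_n\to\mathbb{Z}_2$ and the ``sum of coordinates'' map $\mathbb{Z}_2^{n-1}\to\mathbb{Z}_2$ pull back, along the respective quotient maps, to the single length-parity homomorphism $T_n\to\mathbb{Z}_2$, $s_i\mapsto 1$, whose kernel is $E(T_n)$. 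Hence $E(T_n)/PT_n\cong A_n$ and $E(T_n)/T_n'\cong H$.

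For (1) I would take $m\ge 3$ with $3\nmid m$ (the case $m=2$ is genuinely different, since $T_n[2]/T_n[6]=T_n/PT_n\cong S_n$); then Proposition~\ref{propgcd}(3) with $k=3$ gives $T_n[m]/T_n[3m]\cong E(T_n)/T_n[3]=E(T_n)/PT_n\cong A_n$. For (2), the hypothesis ``$m$ odd'' already forces $m\ge 3$ and $\gcd(m,4)=1$, so Proposition~\ref{propgcd}(3) with $k=4$ gives $T_n[m]/T_n[4m]\cong E(T_n)/T_n[4]=E(T_n)/T_n'\cong H$.

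For (3), with $m$ odd and $3\nmid m$ the integers $3,4,m$ are pairwise coprime, so $T_n[12m]=T_n[3]\cap T_n[4]\cap T_n[m]$. I would restrict $\rho_3$ and $\rho_4$ to $T_n[m]$ and form $\Phi:=(\rho_3,\rho_4)|_{T_n[m]}\colon T_n[m]\to S_n\times W_{n,2}$, using $\im(\rho_3)\cong S_n$ and, via $T_n[4]=T_n'$, $\im(\rho_4)\cong T_n/T_n'\cong W_{n,2}$. Then $\ker\Phi=T_n[m]\cap T_n[3]\cap T_n[4]=T_n[12m]$; every element of $T_n[m]$ has even length (Lemma~\ref{length}), so $\im\Phi\le A_n\times H$; and the two coordinate projections of $\im\Phi$ are onto $A_n$ and onto $H$ by the computations in (1) and (2) (equivalently, by $T_n[m]\,T_n[3]=T_n[m]\,T_n[4]=E(T_n)$ from Proposition~\ref{propgcd}(2)). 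It remains to show $\Phi$ is onto $A_n\times H$, and here I would invoke Goursat's lemma: a subgroup of a direct product projecting onto both factors is the whole product unless the factors share a nontrivial common quotient; but any common quotient of $A_n$ and $H$ is abelian, hence a quotient both of $A_n^{\mathrm{ab}}$ (which has odd order for every $n$) and of the $2$-group $H$, hence trivial. Therefore $T_n[m]/T_n[12m]=T_n[m]/\ker\Phi\cong A_n\times H$.

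The coprimality and $\operatorname{lcm}$ bookkeeping and the identification of $E(T_n)/PT_n$ and $E(T_n)/T_n'$ are routine; the one step that is not purely formal is the surjectivity of $\Phi$ in (3), which amounts to observing that $A_n$ (whose abelianisation has odd order) and the $2$-group $H$ admit no common nontrivial quotient. One should also record that (1) requires $m\ge 3$, and that ``$m$ odd'' in (2) and (3) is used exactly to guarantee $m\ge 3$ together with $\gcd(m,4)=1$.
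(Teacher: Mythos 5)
Your proof is correct, and it reorganises the argument in a way that differs from the paper's. For parts (1) and (2) the paper argues directly: it restricts the quotient maps $\pi:T_n\to S_n$ and $\pi_2:T_n\to W_{n,2}$ to $T_n[m]$, shows the images are exactly $A_n$ and $H$ by computing $\pi((s_is_{i+1})^m)$ and $\pi_2((s_is_{i+1})^m)$ on the elements supplied by Corollary \ref{Cor1}, and identifies the kernels as $T_n[3]\cap T_n[m]=T_n[3m]$ and $T_n[4]\cap T_n[m]=T_n[4m]$. You instead quote Proposition \ref{propgcd}(3) to get $T_n[m]/T_n[km]\cong E(T_n)/T_n[k]$ for $k=3,4$ and then identify $E(T_n)/PT_n\cong A_n$ and $E(T_n)/T_n'\cong H$ via the length-parity homomorphism; since Proposition \ref{propgcd}(2) was itself proved with the same elements $(s_is_{i+1})^m$, the two routes rest on the same computation, but yours makes (1) and (2) formal corollaries of already-established statements. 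For part (3) the paper simply chains Proposition \ref{propgcd}(2)--(4) to get $T_n[m]/T_n[12m]\cong E(T_n)/T_n[12]\cong (E(T_n)/T_n[3])\times(E(T_n)/T_n[4])$, which is shorter than your Goursat argument, though your observation that $A_n$ (odd abelianisation) and the $2$-group $H$ share no nontrivial common quotient is a clean and correct way to force surjectivity of $(\rho_3,\rho_4)|_{T_n[m]}$. Finally, your remark that part (1) tacitly requires $m\ge 3$ is a genuine catch: for $m=2$ one has $T_n[2]/T_n[6]=T_n/PT_n\cong S_n$, so the stated conclusion fails; the paper's proof silently assumes $m\ge 3$ through its appeal to Lemma \ref{length}.
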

\begin{proof}
Let  $\pi:T_n\to S_n$ be the natural map. It follows from Lemma \ref{length} that $\pi(T_n[m]) \le A_n$, the alternating group. By Corollary \ref{Cor1}, we have $(s_is_{i+1})^m \in T_n[m]$, and hence $\pi((s_is_{i+1})^m)\in \pi(T_n[m])$.
But, note that
$$
\begin{aligned}
\pi((s_is_{i+1})^m) &=\left\{\begin{array}{ll}
(i, i+1, i+2) & \text { if } m=3k+1, \\
(i, i+2, i+1) & \text { if } m=3k+2,\\
1  & \text { if } m=3k.
\end{array}\right.
\end{aligned}
$$
Since $\{(i, i+1, i+2) \mid 1 \le i \le n-2\}$ is a generating set for $A_n$ and $\gcd(3,m)=1$, it follows that $\pi(T_n[m])=A_n$. Considering the restriction $\pi|_{T_n[m]}$, we see that
$$\ker(\pi|_{T_n[m]})=\ker(\pi) \cap T_n[m]= PT_n \cap T_n[m]= T_n[3]\cap T_n[m]=T_n[3m],$$
which gives the desired result.
\par

For assertion (2), consider the abelianisation map $\pi_2:T_n\to W_{n, 2}\cong \mathbb Z_2^{n-1}$. By Corollary \ref{Cor1},  $(s_is_{i+1})^m \in T_n[m]$, and hence $\pi_2((s_is_{i+1})^m)\in \pi_2(T_n[m])$. Since $m$ is odd, we have 
$$\pi_2((s_is_{i+1})^m) =\pi_2(s_is_{i+1})=u_iu_{i+1}.$$ 
Note that $H$ is generated by $\{u_1u_2, u_2u_3, \ldots, u_{n-2}u_{n-1} \}$, and hence $H \le \pi_2(T_n[m])$. It now follows from Lemma \ref{length} that $H = \pi_2(T_n[m])$. Considering the restriction $\pi_2|_{T_n[m]}$, we see that
$$\ker(\pi_2|_{T_n[m]})=\ker(\pi_2) \cap T_n[m]= T_n[4]\cap T_n[m]=T_n[4m],$$
which proves the assertion. 
\par

Finally, for assertion (3), we have
\begin{eqnarray*}
T_n[m]/T_n[12m] &\cong & \left(T_n[m]T_n[12]\right)/T_n[12]\\
&\cong & E(T_n)/T_n[12] \\
&\cong & (E(T_n)/T_n[3]) \times (E(T_n)/T_n[4]) \\
&\cong& A_n\times H,
\end{eqnarray*}
where the isomorphisms follow from Proposition \ref{propgcd}.
\end{proof}

\subsection{Congruence subgroups of triplet groups}
For $n \ge3$ and $m \ge 2$, let $L_n[m]:=\ker(\rho_m)$ denote  the principal congruence subgroup of $L_n$ of level $m$. Let $\pi:L_n\to  S_n$ be the natural map and let $PL_n=\ker(\pi)$.

\begin{proposition}\label{ln2=pln}
$L_n[2]=PL_n$ for $n \ge 3$.
\end{proposition}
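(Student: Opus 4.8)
The plan is to mimic the argument for $T_n[3] = PT_n$ (Proposition \ref{tn3=ptn}), but now using the Tits representation $\rho \colon L_n \to \GL(n-1,\mathbb{Z})$ of the triplet group and reduction modulo $2$. Concretely, I would first show that the reduction $\rho_2 \colon L_n \to \GL(n-1,\mathbb{Z}_2)$ factors through the natural projection $\pi \colon L_n \to S_n$, and then check that the induced map $\bar\phi \colon S_n \to \GL(n-1,\mathbb{Z}_2)$ is injective; from these two facts $\ker(\rho_2) = \ker(\pi)$ follows immediately, i.e. $L_n[2] = PL_n$.

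\emph{Step 1: $\rho_2$ factors through $S_n$.} Recall $L_n = \langle y_1,\dots,y_{n-1}\rangle$ and $S_n = \langle \tau_1,\dots,\tau_{n-1}\rangle$ with $\pi(y_i) = \tau_i$; the extra relations imposed in passing from $L_n$ to $S_n$ are $y_j^2 = 1$, which already hold in $L_n$, so the only thing to verify is that the matrices $Y_i = \rho(y_i)$ satisfy $Y_i^2 = I \bmod 2$. From the explicit form of $Y_i$ given in the excerpt, the ``new'' entries are all equal to $1$ (the off-diagonal entries in the active $3\times 3$ block and in the blocks $M_\ell, N_\ell, K_\ell$ reduce to all-$1$ or all-$0$ modulo $2$), so modulo $2$ every $Y_i$ becomes a specific involution of $\mathbb{Z}_2^{n-1}$ — indeed a direct computation shows $Y_i^2 \equiv I \pmod 2$. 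Hence the assignment $\tau_i \mapsto Y_i \bmod 2$ is a well-defined homomorphism $\bar\phi \colon S_n \to \GL(n-1,\mathbb{Z}_2)$ with $\rho_2 = \bar\phi \circ \pi$, exactly as in the commuting-triangle argument of Proposition \ref{tn3=ptn}.

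\emph{Step 2: $\bar\phi$ is injective.} Since $\ker(\bar\phi)$ is a normal subgroup of $S_n$, it suffices to exhibit one non-identity element lying in every non-trivial normal subgroup of $S_n$ on which $\bar\phi$ does not vanish. As in the twin-group argument, the double transposition $(1\,2)(n-1\,n)$ is contained in every non-trivial normal subgroup of $S_n$ (checking the small exceptional cases $n \le 4$ directly, and using that $A_n$ is simple together with $A_n \cap \ker$ being normal in $A_n$ for $n \ge 5$); so I only need $\bar\phi\big((1\,2)(n-1\,n)\big) = (Y_1 Y_{n-1})^? \ne I \bmod 2$ — more precisely the image of that permutation under $\bar\phi$, which is the product of the $\bmod\ 2$ reductions of $Y_1$ and $Y_{n-1}$ (these commute modulo $2$). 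A short inspection of the first and last rows of this product shows it is not the identity matrix over $\mathbb{Z}_2$. Therefore $\ker(\bar\phi)$ is trivial, and combining with Step 1 gives $L_n[2] = \ker(\rho_2) = \ker(\pi) = PL_n$.

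\emph{Main obstacle.} Steps 1 and 2 are both essentially bookkeeping with the explicit $Y_i$, so the one genuinely non-routine point is handling the small cases $n = 3, 4$ in Step 2: $L_3 \cong S_3$ already, so $PL_3$ is trivial and one must check $\rho_2$ is faithful on $S_3$ by hand, while for $n = 4$ one must rule out the Klein four-group $\{e,(1\,2)(3\,4),(1\,3)(2\,4),(1\,4)(2\,3)\}$ lying in $\ker(\bar\phi)$ — again a finite check using the $3\times 3$ matrices $Y_i \bmod 2$. Everything else propagates uniformly in $n$ from the block structure of the Tits matrices.
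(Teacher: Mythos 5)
Your overall strategy is exactly the paper's: factor $\rho_2$ through $\pi\colon L_n\to S_n$ and check that the induced map $S_n\to \GL(n-1,\mathbb{Z}_2)$ is injective by evaluating it on $(1\,2)(n-1\ n)=\tau_1\tau_{n-1}$, which lies in every non-trivial normal subgroup of $S_n$. Step 2 is essentially identical to what the paper does (it computes $\phi(\tau_1\tau_{n-1})$ explicitly for $n=3$, $n=4$ and $n\ge 5$ and sees it is not the identity).

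There is, however, a concrete error in your Step 1. The relations $y_j^2=1$ are \emph{not} the relations added in passing from $L_n$ to $S_n$: they are already part of the presentation of $L_n$, so $Y_j^2=I$ holds exactly over $\mathbb{Z}$ and there is nothing to verify there. What $S_n$ has and $L_n$ lacks are the far-commutation relations $\tau_i\tau_j=\tau_j\tau_i$ for $|i-j|\ge 2$, and these genuinely fail for the integral matrices $Y_i$ (that is why $L_n$ is infinite). So the well-definedness of $\bar\phi$ hinges on checking $Y_iY_j\equiv Y_jY_i \pmod 2$ for $|i-j|\ge 2$, which you never state as the thing to be verified; as written, Step 1 checks a vacuous relation and skips the essential one. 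The repair is easy and is what the paper does: modulo $2$ the off-block entries (all equal to $2$, coming from $m_{i,j}=\infty$) vanish, each $\phi(\tau_i)$ is built from the single $3\times 3$ block
$U=\left(\begin{smallmatrix}1&0&0\\1&1&1\\0&0&1\end{smallmatrix}\right)$
padded by identity blocks, and one checks directly that two such matrices with $|i-j|\ge 2$ commute over $\mathbb{Z}_2$. Your parenthetical remark in Step 2 that $Y_1$ and $Y_{n-1}$ "commute modulo $2$" shows you are aware of the phenomenon, but it needs to be promoted to the central verification of Step 1 for all $|i-j|\ge 2$, not just the pair $(1,n-1)$.
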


\begin{proof}
For $n \ge 3$, we claim that the map $\phi:S_n\to \GL(n-1, \mathbb Z_2)$ given by $\phi(\tau_i)=Y_i \mod 2$ is an embedding. Note that 
\begin{equation*}
\phi(\tau_i) = 
\begin{cases}
\left(\begin{array}{cc|c}
1 & 1 & 0 \\
0 & 1 & \\
\hline 0 & & I_{n-3}
\end{array}\right) & \text{if}\; i= 1,\\
\\
\left(\begin{array}{c|ccc|c}I_{i-2} &  & 0 &  & 0 \\ \hline & 1 & 0 & 0 &\\ 0 & 1 & 1 & 1 & 0 \\ & 0 & 0 & 1 & \\ \hline 0 &  & 0 &  & I_{n-(i+2)}\end{array}\right) & \text{if}\;  2\leq i \leq n-2,\\
\\
\left(\begin{array}{c|c}
I_{n-3} & 0 \\
\hline
0 & \begin{array}{cc}
1 & 0 \\
1 & 1
\end{array}
\end{array}\right) & \text{if}\; i= n-1.
\end{cases}
\end{equation*}
Taking
$U=\left(\begin{matrix}
1 & 0 & 0 \\
1 & 1 & 1 \\
0 & 0 & 1
\end{matrix}\right),$
we see that $U^2= I_3 \mod 2$. Writing $\phi(\tau_i)$ and $\phi(\tau_j)$ as block matrices made up of $U$ together with identity and zero matrices,  we see that $\phi(\tau_i)\phi(\tau_j)= \phi(\tau_j)\phi(\tau_i)$ for all $|i-j|\geq 2$. Hence, $\phi$ is a homomorphism. Note that, 
$$
\begin{aligned}
\phi(\tau_1\tau_{n-1}) &= 
\begin{cases}
\left(\begin{array}{cc}
2 & 1 \\
1 & 1 
\end{array}\right) & \text{if}\; n= 3,\\
\\
\left(\begin{matrix}
1 & 1 & 0 \\
0 & 1 & 0 \\
0 & 1 & 1
\end{matrix}\right) & \text{if}\; n=4,\\
\\
\left(\begin{array}{cc|c|cc}
1 & 1 & 0 & 0 & 0 \\
0 & 1 & 0 & 0 & 0 \\\hline
0 & 0 & I_{n-5} & 0 & 0 \\\hline
0 & 0 & 0 & 1 & 0 \\
0 & 0 & 0 & 1 & 1
\end{array}\right) & \text{if}\; n\geq 5,\\
\end{cases}\\
\end{aligned}
$$
and hence $\phi(\tau_1\tau_{n-1})\neq I_{n-1}$. Since the only proper normal subgroup of $S_n$ that does not contain $\tau_1\tau_{n-1}$ is the trivial subgroup, it follows that $\phi$ is injective. Finally, the commutativity of the diagram
\[
\begin{tikzcd}
 {L_n} \arrow{d}{\pi} \arrow{rd}{\rho_2} &\\ 
 S_n \arrow{r}{\phi} & \GL(n-1, \mathbb Z_2)
\end{tikzcd}
\]
implies that $L_n[2]=\ker(\rho_2)= \ker(\pi)=PL_n$.
\end{proof}

The following problem seems challenging.

\begin{problem}
Find generating sets for principal congruence subgroups of $T_n$ and $L_n$.
\end{problem}

\medskip

\section{Crystallographic quotients of small Coxeter groups}\label{sec4}
A closed subgroup $H$ of a Hausdorff topological group $G$ is said to be {\it uniform} if $G/H$ is a compact space.  A discrete and uniform subgroup $G$ of $\mathbb{R}^n \rtimes \Oo(n, \mathbb{R})$ is called a {\it crystallographic group of dimension $n$}. If, in addition $G$ is torsion free, then it is called a {\it Bieberbach group of dimension $n$}. The following characterisation of crystallographic groups is well-known \cite[Lemma 8]{MR3595797}.

\begin{lemma}
 A group $G$ is a crystallographic group if and only if there is an integer $n$, a finite group $H$, and a short exact sequence
$$
0 \longrightarrow \mathbb{Z}^n \longrightarrow G \stackrel{\eta}{\longrightarrow} H \longrightarrow 1
$$
such that the integral representation $\Theta : H \longrightarrow \Aut\left(\mathbb{Z}^n\right)$ defined by $\Theta(h)(x)=z x z^{-1}$ is faithful, where  $h \in H$, $x \in \mathbb{Z}^n$ and $z \in G$ is such that $\eta(z)=h$. 
\end{lemma}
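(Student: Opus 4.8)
The plan is to prove the two implications separately: the forward direction will rest on Bieberbach's first theorem, and the converse on the vanishing of the cohomology of a finite group with coefficients in a real vector space.

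For the forward direction, suppose $G$ is a discrete and uniform subgroup of $\mathbb{R}^n \rtimes \Oo(n,\mathbb{R})$, and let $p$ denote the projection onto $\Oo(n,\mathbb{R})$. First I would set $L = G \cap \mathbb{R}^n$, the translation subgroup, and invoke Bieberbach's first theorem to conclude that $L$ is a lattice of full rank $n$ in $\mathbb{R}^n$ and has finite index in $G$; equivalently, the point group $H = p(G)$ is finite. Identifying $L$ with $\mathbb{Z}^n$ produces the short exact sequence $0 \to \mathbb{Z}^n \to G \xrightarrow{\eta} H \to 1$ with $\eta = p|_G$. Conjugation on the abelian normal subgroup $L$ then yields $\Theta \colon H \to \Aut(\mathbb{Z}^n)$ as in the statement, and for faithfulness I would observe that if $z = (v,h) \in G$ centralises $L$ then $z t z^{-1} = h(t)$ for every translation $t \in L$, so $h$ fixes $L$ pointwise and hence $h = \Id$ since $L$ spans $\mathbb{R}^n$. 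This part is essentially routine once Bieberbach's theorem is granted.

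For the converse, given $0 \to \mathbb{Z}^n \to G \xrightarrow{\eta} H \to 1$ with $H$ finite and $\Theta$ faithful, I would first average an inner product over $H$ so that $\Theta(H) \subseteq \Oo(n,\mathbb{R})$, hence $H$ embeds in $\Oo(n,\mathbb{R})$. The extension is classified by a class in $H^2(H;\mathbb{Z}^n)$; pushing it forward along the $H$-equivariant inclusion $\mathbb{Z}^n \hookrightarrow \mathbb{R}^n$ gives an extension $0 \to \mathbb{R}^n \to \overline{G} \to H \to 1$ together with an injection $G \hookrightarrow \overline{G}$, and since $H$ is finite while $\mathbb{R}^n$ is a $\mathbb{Q}$-vector space we have $H^2(H;\mathbb{R}^n) = 0$, so $\overline{G} \cong \mathbb{R}^n \rtimes_\Theta H$. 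This produces an injective homomorphism $\Phi \colon G \hookrightarrow \mathbb{R}^n \rtimes_\Theta H \subseteq \mathbb{R}^n \rtimes \Oo(n,\mathbb{R})$. (Equivalently, and perhaps cleaner to write out explicitly, one fixes a set-theoretic section $s$ of $\eta$ with associated $\mathbb{Z}^n$-valued $2$-cocycle $f$, chooses $c \colon H \to \mathbb{R}^n$ with coboundary $f$ and $c(1)=0$, and sets $\Phi(a\,s(h)) = (a + c(h),\Theta(h))$, checking the homomorphism property from the cocycle identity.)

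Finally I would verify that $\Phi(G)$ is discrete and uniform. For discreteness, the composite $G \to \Oo(n,\mathbb{R})$ equals $\Theta\circ\eta$, which has finite image $\Theta(H)$ and, crucially because $\Theta$ is faithful, kernel exactly $\mathbb{Z}^n$; hence any element of $\Phi(G)$ close enough to the identity has trivial orthogonal part and lies in the standard lattice $\mathbb{Z}^n \times \{\Id\}$, which is discrete. For uniformity, $(\mathbb{R}^n \rtimes \Oo(n,\mathbb{R}))/\mathbb{Z}^n$ fibres over the compact group $\Oo(n,\mathbb{R})$ with compact fibre $\mathbb{R}^n/\mathbb{Z}^n$, hence is compact, and so is its continuous image $(\mathbb{R}^n \rtimes \Oo(n,\mathbb{R}))/\Phi(G)$ since $\mathbb{Z}^n \le \Phi(G)$; thus $\Phi(G)$, and hence $G$, is a crystallographic group of dimension $n$. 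The main obstacles are the two substantive external inputs --- Bieberbach's first theorem and the cohomology vanishing $H^2(H;\mathbb{R}^n)=0$ --- rather than any of the bookkeeping; I also expect that pinning down the exact role of the faithfulness hypothesis, namely that it is precisely what forces discreteness of $\Phi(G)$, deserves a careful sentence.
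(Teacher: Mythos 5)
The paper does not prove this lemma; it is quoted as a well-known fact with a citation to \cite[Lemma 8]{MR3595797}, so there is no in-paper argument to compare against. Your two-direction proof --- Bieberbach's first theorem for the forward implication, and averaging an inner product plus the vanishing of $H^2(H;\mathbb{R}^n)$ for the converse --- is the standard argument and is correct; the one small refinement I would add is that faithfulness of $\Theta$ is needed not only for discreteness of $\Phi(G)$ but already for injectivity of the explicit map $a\,s(h)\mapsto(a+c(h),\Theta(h))$ into $\mathbb{R}^n\rtimes \Oo(n,\mathbb{R})$, since the homomorphism $\mathbb{R}^n\rtimes_\Theta H\to\mathbb{R}^n\rtimes \Oo(n,\mathbb{R})$ has kernel $\{0\}\times\ker\Theta$.
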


The group $H$ is called the holonomy group of $G$, the integer $n$ is called the dimension of $G$, and $\Theta : H \longrightarrow \Aut\left(\mathbb{Z}^n\right)$ is called the holonomy representation of $G$. It is known that any
finite group is the holonomy group of some flat manifold \cite[Theorem III.5.2]{MR0862114}. Furthermore, there is a correspondence between the class of Bieberbach groups and the class of compact flat Riemannian manifolds \cite[Theorem 2.1.1]{MR1482520}. Crystallographic quotients of braid groups and their virtual analogues have been investigated in recent works.

\begin{proposition}\cite[Proposition 1]{MR3595797}
 For $n \geq 2$, there is a short exact sequence
$$
1 \longrightarrow \mathbb{Z}^{n(n-1) / 2} \longrightarrow B_n /P_n^{'} \longrightarrow S_n \longrightarrow 1
$$
such that the group $B_n /P_n'$ is a crystallographic group of dimension $n(n-1)/2$.
\end{proposition}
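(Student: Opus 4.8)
The plan is to exhibit $B_n/P_n^{'}$ as an extension of $S_n$ by a free abelian group and then verify the hypotheses of the crystallographic-group criterion quoted above. Recall the classical pure braid sequence $1 \to P_n \to B_n \to S_n \to 1$, where $P_n$ is the pure braid group and $B_n \to S_n$ sends a braid to its underlying permutation. It is standard that $P_n$ is generated by the elements $A_{i,j}$ with $1 \le i < j \le n$, and that its abelianisation $P_n/P_n^{'}$ is free abelian of rank $n(n-1)/2$ with basis the classes of the $A_{i,j}$ (for instance, via the iterated fibration $P_n \to P_{n-1}$ with free fibre). Since $P_n^{'}$ is characteristic in $P_n$ and $P_n$ is normal in $B_n$, the subgroup $P_n^{'}$ is normal in $B_n$; quotienting the pure braid sequence by $P_n^{'}$ gives the short exact sequence
$$1 \longrightarrow \mathbb{Z}^{n(n-1)/2} \longrightarrow B_n/P_n^{'} \longrightarrow S_n \longrightarrow 1.$$

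First I would identify the holonomy representation $\Theta \colon S_n \to \Aut\!\big(\mathbb{Z}^{n(n-1)/2}\big)$ attached to this extension. By definition $\Theta(h)$ is conjugation in $B_n/P_n^{'}$ by a lift of $h$; since $P_n/P_n^{'}$ is abelian, conjugation by elements of $P_n$ acts trivially on it, so $\Theta$ is well defined on $S_n = B_n/P_n$. Using Artin's conjugation formulas for $\sigma_k A_{i,j}\sigma_k^{-1}$ — equivalently, the fact that the $B_n$-conjugation action permutes the generators $A_{i,j}$, up to pure-braid conjugacy, according to the underlying permutation — one checks on the basis that $\Theta(h)$ carries the class of $A_{i,j}$ to the class of $A_{h(i),h(j)}$, with the convention $A_{j,i}=A_{i,j}$. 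Hence $\Theta$ is precisely the permutation representation of $S_n$ on the free abelian group whose basis is the set of two-element subsets of $\{1,\dots,n\}$.

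Next I would check that $\Theta$ is faithful for $n \ge 3$. If $h \ne 1$, pick $i$ with $h(i) = j \ne i$; as $n \ge 3$ there is some $k \notin \{i,j\}$, and then the image of the basis vector $\{i,k\}$ is $\{j, h(k)\}$, which contains $j \notin \{i,k\}$ and hence differs from $\{i,k\}$. So $\Theta(h) \ne \Id$, and $\Theta$ is injective. Applying the quoted characterisation of crystallographic groups to the displayed extension, $B_n/P_n^{'}$ is a crystallographic group of dimension $n(n-1)/2$ with holonomy group $S_n$. The remaining case $n = 2$ is trivial, since $B_2/P_2^{'} = B_2 \cong \mathbb{Z}$ is crystallographic of dimension $1$.

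I expect the only genuinely non-formal point to be the identification of $\Theta$ with the permutation action in the second paragraph, which rests on the standard but slightly technical conjugation relations in $B_n$ and requires care with the conventions for the $A_{i,j}$ and the direction of the action. The normality of $P_n^{'}$ in $B_n$, the computation of $P_n/P_n^{'}$, and the faithfulness of the permutation action on two-element subsets are all routine.
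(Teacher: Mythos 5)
The paper does not prove this proposition---it is quoted from \cite{MR3595797} as background---but your argument is correct and is essentially the proof given in that reference: quotient the pure braid sequence by $P_n^{'}$, identify the induced $S_n$-action on $P_n/P_n^{'}\cong\mathbb{Z}^{n(n-1)/2}$ with the permutation action on two-element subsets of $\{1,\dots,n\}$ via the Artin conjugation relations (pure-braid conjugation dying in the abelianisation), and check faithfulness for $n\ge 3$. Your separate treatment of $n=2$ is also the right move, since there the holonomy of the displayed sequence is trivial (conjugation by $\sigma_1$ fixes $\sigma_1^2$), so the criterion cannot be applied with $H=S_2$ and one instead observes directly that $B_2/P_2^{'}\cong\mathbb{Z}$ is crystallographic of dimension $1$.
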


\begin{proposition}\cite[Theorem 2.4]{Ocampo-Santos-2021}
 For $n \geq 2$, there is a split short exact sequence
$$
1 \longrightarrow \mathbb{Z}^{n(n-1)} \longrightarrow V B_n / V P_n^{'} \longrightarrow S_n \longrightarrow 1
$$
such that the group $V B_n / V P_n^{'}$ is a crystallographic group of dimension $n(n-1)$.
\end{proposition}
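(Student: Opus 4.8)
The plan is to verify the hypotheses of the Lemma characterising crystallographic groups, exactly as in the preceding proposition on $B_n/P_n^{'}$; the only new ingredient is that the projection $VB_n\to S_n$ admits a section in the virtual setting, which is what upgrades the sequence to a split one. First I would record the structure of the translation lattice. By the standard presentation of the pure virtual braid group (due to Bardakov), $VP_n$ is generated by elements $\lambda_{i,j}$ with $1\le i\neq j\le n$, subject only to relations $\lambda_{i,j}\lambda_{k,l}=\lambda_{k,l}\lambda_{i,j}$ for $i,j,k,l$ pairwise distinct and $\lambda_{k,i}\lambda_{k,j}\lambda_{i,j}=\lambda_{i,j}\lambda_{k,j}\lambda_{k,i}$ for $i,j,k$ pairwise distinct. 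Abelianising makes both families of relations vacuous, so $VP_n/VP_n^{'}$ is free abelian of rank $n(n-1)$ on the images of the $\lambda_{i,j}$. Since $VP_n^{'}$ is characteristic in $VP_n$ and $VP_n\unlhd VB_n$, we have $VP_n^{'}\unlhd VB_n$, and the exact sequence $1\to VP_n\to VB_n\to S_n\to 1$ descends to
$$1\longrightarrow VP_n/VP_n^{'}\longrightarrow VB_n/VP_n^{'}\longrightarrow S_n\longrightarrow 1,$$
that is, $1\to\mathbb{Z}^{n(n-1)}\to VB_n/VP_n^{'}\to S_n\to 1$.

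Next I would establish the splitting and the faithfulness of the holonomy representation. For the splitting: the subgroup of $VB_n$ generated by the permutation-type generators $\rho_1,\dots,\rho_{n-1}$ satisfies the Coxeter relations of $S_n$ and maps isomorphically onto $S_n$ under $VB_n\to S_n$, so $\tau_i\mapsto\rho_i$ defines a section, which composed with $VB_n\to VB_n/VP_n^{'}$ splits the displayed sequence. For the holonomy representation $\Theta:S_n\to\Aut(\mathbb{Z}^{n(n-1)})$: conjugation by a lift of $w\in S_n$ (a word in the $\rho_i$) carries $\lambda_{i,j}$ to $\lambda_{w(i),w(j)}$ up to a factor in $VP_n^{'}$, so on $VP_n/VP_n^{'}$ the operator $\Theta(w)$ is precisely the permutation of the basis $\{\lambda_{i,j}\}$ induced by the natural action of $S_n$ on ordered pairs of distinct indices. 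If $\Theta(w)$ is the identity then $w(i)=i$ for all $i$, forcing $w=1$; hence $\Theta$ is faithful, and the Lemma yields that $VB_n/VP_n^{'}$ is a crystallographic group of dimension $n(n-1)$.

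The step requiring the most care is pinning down the conjugation formula for $\rho_s\lambda_{i,j}\rho_s^{-1}$ in $VB_n$ precisely enough to conclude that $\Theta(w)$ is the clean index-permutation operator on $\mathbb{Z}^{n(n-1)}$, since any inner-automorphism correction in $VP_n$ must be seen to vanish after abelianising. The only degenerate case is $n=2$, where $VB_2\cong\mathbb{Z}*\mathbb{Z}_2$, the group $VP_2$ is free of rank $2$ on $\lambda_{1,2},\lambda_{2,1}$, and the nontrivial element of $S_2$ interchanges the two basis vectors — visibly a faithful action — so the conclusion holds in that case as well.
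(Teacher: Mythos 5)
Your argument is correct: the paper does not prove this statement but merely cites it from [Ocampo-Santos-2021], and your proof is essentially the one given there — abelianise Bardakov's presentation of $VP_n$ to get the lattice $\mathbb{Z}^{n(n-1)}$ on the classes of the $\lambda_{i,j}$, split the sequence via $\tau_i\mapsto\rho_i$, and observe that the holonomy action is the (faithful) permutation action of $S_n$ on ordered pairs of distinct indices. The one point you flag as delicate is in fact clean: in the semidirect decomposition $VB_n\cong VP_n\rtimes S_n$ the conjugation formula $\rho_w\lambda_{i,j}\rho_w^{-1}=\lambda_{w(i),w(j)}$ holds exactly, with no correction term, so no appeal to vanishing in the abelianisation is needed.
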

\medskip

\subsection{Crystallographic quotients of twin groups}

Though determining the precise structure of $PT_n$ is still an open problem (see the introduction), the following result regarding its abelianisation is well-known (see \cite[Theorem 9.9]{MR2193190} and  \cite{MR1317619}).

\begin{proposition}
The abelianisation of the pure twin group $PT_n$ is a free abelian group of rank $2^{n-3}(n^2-5n+8)-1$.
\end{proposition}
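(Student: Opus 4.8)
The plan is to realise $PT_n^{ab}$ as the first homology of an explicit aspherical space and to compute it through the topology of a subspace arrangement. By Khovanov \cite[Proposition 3.2(a)]{MR1386845} (the $T_n$-analogue of the statement about $Y_n$ used in the proof of Proposition \ref{csp fails for L_n}), the complement of the real $3$-equal arrangement
\[
M_n \;=\; \mathbb{R}^n \setminus \bigcup_{1 \le i < j < k \le n} \bigl\{\, x \in \mathbb{R}^n \mid x_i = x_j = x_k \,\bigr\}
\]
is an Eilenberg--MacLane space $K(PT_n, 1)$. Since $M_n$ is path-connected, $PT_n^{ab} = H_1(PT_n; \mathbb{Z}) \cong H_1(M_n; \mathbb{Z})$, so it suffices to compute $H_1(M_n; \mathbb{Z})$ (its freeness will fall out of the computation).

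First I would feed the arrangement into the Goresky--MacPherson formula. Its intersection poset $L$ is the lattice of partitions of $\{1, \dots, n\}$ whose non-singleton blocks all have size at least $3$, ordered by refinement, with $\hat 0 = \mathbb{R}^n$; write $\Pi^{\ge 3}_m$ for this lattice on an $m$-element set and $\Delta_m$ for the order complex of its open interval $(\hat 0, \hat 1)$. A partition $\pi \in L$ with non-singleton blocks $B_1, \dots, B_r$ defines a subspace of codimension $\sum_{t=1}^{r}(|B_t| - 1)$, and $[\hat 0, \pi] \cong \prod_{t=1}^{r}\Pi^{\ge 3}_{|B_t|}$. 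The real Goresky--MacPherson formula then reads
\[
\tilde H_i(M_n; \mathbb{Z}) \;\cong\; \bigoplus_{\pi \in L,\; \pi > \hat 0} \tilde H^{\,\operatorname{codim}(\pi) - 2 - i}\!\bigl(\Delta(\hat 0, \pi); \mathbb{Z}\bigr),
\]
where $\Delta(\hat 0, \pi)$ is the order complex of the open interval $(\hat 0, \pi)$. Using the standard fact that, up to homotopy, the order complex of the proper part of a product of bounded posets is the $(r-1)$-fold suspension of the join of the order complexes of the proper parts, one gets $\Delta(\hat 0, \pi) \simeq \Sigma^{r-1}\bigl(\Delta_{|B_1|} * \cdots * \Delta_{|B_r|}\bigr)$, with each $\Delta_m$ of dimension $m - 4$. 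Hence $\Delta(\hat 0, \pi)$ has dimension $\sum_t |B_t| - 2r - 2$, while the index $\operatorname{codim}(\pi) - 3 = \sum_t |B_t| - r - 3$ relevant to $H_1$ is strictly larger when $r \ge 2$ and exactly equal when $r = 1$. Thus only the $\pi$ with a single non-singleton block contribute in degree $1$, and
\[
H_1(M_n; \mathbb{Z}) \;\cong\; \bigoplus_{m=3}^{n}\ \bigoplus_{\substack{B \subseteq \{1,\dots,n\} \\ |B| = m}} \tilde H^{\,m-4}\!\bigl(\Delta_m; \mathbb{Z}\bigr).
\]

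It now remains to invoke the combinatorial computation of Björner and Welker \cite{MR1317619} (see also \cite[Theorem 9.9]{MR2193190}): the reduced homology of $\Delta_m$ is torsion free, and its top group $\tilde H^{m-4}(\Delta_m; \mathbb{Z})$ is free abelian of rank $\binom{m-1}{2}$. Consequently every summand above is free abelian, so $H_1(M_n; \mathbb{Z})$, and with it $PT_n^{ab}$, is free abelian of rank $\sum_{m=3}^{n}\binom{n}{m}\binom{m-1}{2}$. Finally, writing $\binom{m-1}{2} = \tfrac12(m^2 - 3m + 2)$, adding back the vanishing terms for $m \le 2$, and using $\sum_{m=0}^{n}\binom{n}{m} = 2^n$, $\sum_{m=0}^{n} m\binom{n}{m} = n2^{n-1}$ and $\sum_{m=0}^{n} m^2\binom{n}{m} = n(n+1)2^{n-2}$, one obtains
\[
\sum_{m=3}^{n}\binom{n}{m}\binom{m-1}{2} \;=\; \tfrac12\Bigl(2^{n-2}(n^2 - 5n + 8) - 2\Bigr) \;=\; 2^{n-3}(n^2 - 5n + 8) - 1,
\]
which is exactly the claimed rank.

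The main obstacle is the last ingredient: showing that the order complex of the $3$-equal partition lattice has torsion-free homology with top Betti number $\binom{m-1}{2}$. This is precisely the nontrivial content of the cited work of Björner--Welker; granted it, everything else — Khovanov's $K(\pi,1)$, the Goresky--MacPherson formula, the suspension-of-join identity for products of bounded posets, and the binomial manipulation — is formal. One should also note that the torsion-freeness of $H_1(M_n)$ is automatic here, since the Goresky--MacPherson formula presents $H^\ast(M_n; \mathbb{Z})$ as a direct sum of free groups, whence $H_\ast(M_n; \mathbb{Z})$ is free by the universal coefficient theorem.
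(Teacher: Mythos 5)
Your argument is correct and is essentially the route the paper itself points to: the paper states this result without proof, citing exactly Bj\"{o}rner--Welker \cite{MR1317619} (whose computation of the homology of the $3$-equal arrangement via the Goresky--MacPherson formula is what you reconstruct) together with Khovanov's identification of the complement as a $K(PT_n,1)$. Your index bookkeeping (only single-block partitions contribute to degree $1$, each giving $\tilde H^{m-4}(\Delta_m)$ of rank $\binom{m-1}{2}$) and the final binomial sum both check out, and they agree with the known values $PT_3\cong\mathbb{Z}$, $PT_4\cong F_7$, $PT_5\cong F_{31}$.
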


For each $n\geq 2$, we have a well-defined homomorphism $f_n: PT_n \to PT_{n-1}$ obtained by removing the $n$-th strand from the diagram of an element of $PT_n$.  Let $U_n$ denote $\ker(f_n)$. In the reverse direction, let $i_n: PT_{n-1}\to PT_n$ be the inclusion obtained by adding a strand to the rightmost side of the diagram of an element of $PT_{n-1}$. Since $f_n ~ i_n= \id_{PT_{n-1}}$, we have the split exact sequence
$$
1\longrightarrow U_n \longrightarrow PT_n \stackrel{f_n}{\longrightarrow}  PT_{n-1} \longrightarrow 1.
$$

Since $PT_n^{'}$ is normal in $T_n$, we have the following short exact sequence
$$
1 \longrightarrow PT_n/PT_n^{'} \longrightarrow T_n /PT_n^{'} \stackrel{\pi}{\longrightarrow} S_n \longrightarrow 1.
$$

For subgroups $A$ and $B$ of a group $G$,  let $[A, B] = \langle [a, b] \mid a \in A,~ b \in B\rangle$. The following basic observation is needed in the main result of this section.

\begin{lemma}\label{TL1}
Let $G=N\rtimes H$ be the internal semi-direct product of a normal subgroup $N$ and a subgroup $H$ of $G$. Then 
$$G' = N^{'}[N,H] H^{'} = \big(N^{'} [N,H] \big) \rtimes H^{'}.$$ In particular, $G^{'} \cap H=H^{'}.$
\end{lemma}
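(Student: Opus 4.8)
The plan is to verify the three claimed identities in turn, using only elementary commutator manipulations and the defining property of an internal semidirect product $G = N \rtimes H$. Recall that since $N$ is normal in $G$, for any $a \in N$ and $h \in H$ the commutator $[a,h] = a (h a^{-1} h^{-1})$ lies in $N$, so $[N,H] \subseteq N$; moreover $[N,H]$ is normalised by both $N$ and $H$ (for $[N,H]$ being the mutual commutator subgroup, it is normal in $\langle N, H \rangle = G$), hence $N^{'}[N,H]$ is a normal subgroup of $G$ contained in $N$. This will make the semidirect-product decomposition $\big(N^{'}[N,H]\big) \rtimes H^{'}$ meaningful once the equality with $G^{'}$ is established.

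First I would prove the inclusion $G^{'} \subseteq N^{'}[N,H]H^{'}$. Every element of $G$ can be written uniquely as $n h$ with $n \in N$, $h \in H$, so it suffices to show that the generating commutators $[n_1 h_1, n_2 h_2]$ lie in the right-hand side. Working modulo the normal subdgroup $M := N^{'}[N,H]$, the images of $N$ and $H$ in $G/M$ commute with each other and the image of $N$ is abelian, so $G/M$ is generated by the commuting images of an abelian group and of $H$; hence $(G/M)^{'}$ is the image of $H^{'}$, which gives $G^{'} \subseteq M H^{'} = N^{'}[N,H]H^{'}$. For the reverse inclusion, $N^{'} \subseteq G^{'}$ and $H^{'} \subseteq G^{'}$ are clear, and $[N,H] \subseteq G^{'}$ by definition of the commutator subgroup; since $G^{'}$ is a subgroup, $N^{'}[N,H]H^{'} \subseteq G^{'}$. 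This establishes $G^{'} = N^{'}[N,H]H^{'}$.

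Next I would upgrade the product decomposition $G^{'} = \big(N^{'}[N,H]\big) H^{'}$ to an internal semidirect product. The factor $N^{'}[N,H]$ is normal in $G$ (as noted above), hence normal in $G^{'}$; the factor $H^{'}$ is a subgroup; and their intersection is trivial because $N^{'}[N,H] \subseteq N$ while $H^{'} \subseteq H$ and $N \cap H = \{1\}$ in the ambient semidirect product. Therefore $G^{'} = \big(N^{'}[N,H]\big) \rtimes H^{'}$. Finally, for the ``in particular'' statement, the inclusion $H^{'} \subseteq G^{'} \cap H$ is immediate; conversely, if $x \in G^{'} \cap H$, then writing $x \in G^{'} = \big(N^{'}[N,H]\big) \rtimes H^{'}$ as $x = m h'$ with $m \in N^{'}[N,H] \subseteq N$ and $h' \in H^{'} \subseteq H$, the relation $m = x (h')^{-1} \in H$ forces $m \in N \cap H = \{1\}$, so $x = h' \in H^{'}$.

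I do not expect a serious obstacle here; the only point requiring a little care is the bookkeeping that $[N,H]$ and hence $N^{'}[N,H]$ is genuinely normal in $G$ (not merely a subset of $N$), which is what legitimises passing to the quotient $G/M$ in the first inclusion and the semidirect decomposition at the end. Everything else is a routine application of the identities $[ab,c] = {}^a[b,c]\,[a,c]$ and $[a,bc] = [a,c]\,{}^c[a,b]$ together with $N \cap H = \{1\}$.
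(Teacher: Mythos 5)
Your proof is correct. The paper states this lemma as a ``basic observation'' and gives no proof, so there is nothing to compare against; your argument --- passing to the quotient by $M = N'[N,H]$, where the image of $N$ becomes central so that $(G/M)'$ is the image of $H'$, and then using $N \cap H = \{1\}$ to split the product and to pin down $G' \cap H$ --- is a clean and complete way to establish all three assertions. The one point that genuinely needs justification, namely that $[N,H]$ (and hence $M$) is normal in $G$, is correctly handled via the standard fact that $[A,B]$ is normalised by $\langle A, B\rangle$, which follows from the identities $[ab,c] = [a,c]^{b}[b,c]$ and $[a,bc] = [a,c][a,b]^{c}$ that you cite.
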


\begin{theorem}\label{tn mod ptn cryst}
For $n\geq 4$, there is a short exact sequence 
$$1 \longrightarrow PT_n/PT_n^{'} \longrightarrow T_n /PT_n^{'} \stackrel{\pi}{\longrightarrow} S_n \longrightarrow 1$$ such that $T_n /PT_n^{'}$ is a crystallographic group of dimension $2^{n-3}(n^2-5n+8)-1$. Furthermore, $T_n /PT_n^{'}$ is not a  Bieberbach group. 
\end{theorem}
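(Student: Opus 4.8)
The plan is to verify the hypotheses of the characterisation of crystallographic groups (the Lemma following the definition of Bieberbach group) for the short exact sequence
$$1 \longrightarrow PT_n/PT_n^{'} \longrightarrow T_n /PT_n^{'} \stackrel{\pi}{\longrightarrow} S_n \longrightarrow 1,$$
and then exhibit torsion in $T_n/PT_n^{'}$ to rule out the Bieberbach property. The first thing to check is that this sequence is genuinely an extension of $S_n$ by a free abelian group: normality of $PT_n^{'}$ in $T_n$ gives exactness, the middle term is finitely generated (being a quotient of $T_n$), and the kernel $PT_n/PT_n^{'}$ is free abelian of rank $2^{n-3}(n^2-5n+8)-1$ by the Proposition on the abelianisation of $PT_n$. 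So the only real content is faithfulness of the induced holonomy representation $\Theta\colon S_n \to \Aut\big(PT_n/PT_n^{'}\big) \cong \GL\big(2^{n-3}(n^2-5n+8)-1,\mathbb{Z}\big)$, which sends $\bar\sigma \in S_n$ to the conjugation action of any lift on the abelianised pure twin group.

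The key step is therefore to show $\ker\Theta = 1$. Since $A_n$ is the only non-trivial proper normal subgroup of $S_n$ for $n\ne 4$ (and for $n=4$ one additionally has the Klein four-group $V$), it suffices to produce, for a suitable small set of test permutations, an explicit generator of $PT_n/PT_n^{'}$ that is moved by the corresponding lift. Concretely, I would pick a generator of $PT_n$ — for instance the "elementary" pure twin element $A_{i,j}$ coming from the standard generating set of $PT_n$ used in the computation of its abelianisation — and compute the effect of conjugating it by a lift of a transposition $s_k \in T_n$ (or of $(12)$, resp. $(12)(34)$, to handle $A_n$ and $V$ separately). The conjugate is another product of $A$-type generators, and I would read off that its class in the free abelian group $PT_n/PT_n^{'}$ differs from that of $A_{i,j}$; this shows no transposition lies in $\ker\Theta$, hence $\ker\Theta$ contains no element of even permutation-parity structure forcing it into $A_n$ or $V$. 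A clean way to organise this: show that $\Theta$ already fails to be trivial on every normal subgroup by checking it is non-trivial on $A_n$ (kills the possibility $\ker\Theta = A_n$ or $S_n$) and, for $n=4$, separately non-trivial on $V$; combined with simplicity considerations this forces $\ker\Theta=1$.

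For the final clause — that $T_n/PT_n^{'}$ is not Bieberbach — I would simply exhibit torsion. A Bieberbach group is torsion free, so it is enough to find a finite-order element; the image in $T_n/PT_n^{'}$ of a Coxeter generator $s_i$ satisfies $s_i^2 = 1$ and is non-trivial (since $s_i \notin PT_n^{'}$, as $PT_n^{'} \le PT_n = T_n[3]$ consists of even-length words by Lemma \ref{length}, while $s_i$ has length one). Hence $\bar s_i$ has order exactly $2$, so $T_n/PT_n^{'}$ has torsion and cannot be a Bieberbach group.

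The main obstacle I anticipate is the faithfulness verification: one must have a concrete enough handle on a generating set of $PT_n$ and on how $T_n$ acts by conjugation on these generators (in terms of the diagrammatic or combinatorial description of pure twins) to be certain the action on the abelianisation is non-trivial on $A_n$, and this requires either citing or reproducing the explicit generators behind the rank formula $2^{n-3}(n^2-5n+8)-1$. The bookkeeping for the $n=4$ exceptional normal subgroup $V \trianglelefteq S_4$ is a minor additional wrinkle but follows the same pattern. Everything else — exactness, finite generation, the rank of the kernel, and the torsion argument — is immediate from results already established in the paper.
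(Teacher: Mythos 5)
Your overall architecture is the same as the paper's: reduce faithfulness of $\Theta$ to showing it is non-trivial on the minimal normal subgroups of $S_n$ (with the Klein four-group as the extra case at $n=4$), and rule out the Bieberbach property by exhibiting $2$-torsion. The exactness, the rank of the kernel, and the torsion argument via $\bar s_i$ are all fine (the paper uses $s_1s_3$ instead of $s_i$, but either works).

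The genuine gap is in the one step that carries all the content: your plan to verify that conjugation moves a generator's class in $PT_n/PT_n^{'}$ rests on a ``standard generating set'' of elementary pure twin elements $A_{i,j}$ with a computable conjugation action, analogous to the pure braid group. No such presentation is available here --- the paper itself emphasizes that the structure of $PT_n$ is still not understood, and explicit free generating sets are known only for $PT_4\cong F_7$ and $PT_5\cong F_{31}$ (with $PT_6$ not even free). So ``read off that its class differs'' cannot be executed as stated for general $n$, and you have correctly flagged this as the obstacle without resolving it. The paper's resolution is worth noting: it tests $\Theta(\tau_1\tau_{n-1})$ on the single element $(s_1s_2)^3$, uses the explicit free generating sets of $PT_4$ and $PT_5$ (which contain $(s_1s_2)^3$ and its relevant conjugate) to settle $n=4$ and $n=5$, and then propagates the non-vanishing of $(s_1s_2)^6$ modulo $PT_n^{'}$ to all $n\ge 5$ by induction, using the splitting $PT_n=U_n\rtimes PT_{n-1}$ together with Lemma \ref{TL1} (which gives $G^{'}\cap H=H^{'}$ for $G=N\rtimes H$). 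Without this inductive mechanism, or some substitute giving control of $PT_n^{'}\cap PT_{n-1}$, your argument does not close for $n\ge 6$.
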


\begin{proof}
Note that $PT_3= \langle (s_1s_2)^3 \rangle$ is the infinite cyclic group \cite[Theorem 2]{MR4027588}, and hence $T_3$ is not a crystallographic group.  For $n \ge 4$, it suffices to show that $\Theta(\tau_1\tau_{n-1})$ is a non-identity automorphism of $PT_n/PT_n^{'}$. For $n=4$, we have $\Theta(\tau_1\tau_3)((s_1s_2)^3)=s_3\left(s_2s_1\right)^3s_3 \mod PT_4^{'}$. Since $(s_1s_2)^3$ and $s_3 \left(s_1s_2\right)^3 s_3$ lie in a minimal generating set for $PT_4 \cong F_7$ \cite[Theorem 2]{MR4027588}, it follows that $(s_1s_2)^3\neq s_3\left(s_2s_1\right)^3s_3 \mod PT_4^{'}$. But, the only proper normal subgroup of $S_4$ that does not contain $\tau_1\tau_3$ is the trivial subgroup, and hence $\Theta$ is faithful.
\par

Now, assuming $n\geq 5$, we have $\Theta(\tau_1\tau_{n-1})((s_1s_2)^3)=(s_2s_1)^3 \mod PT_n^{'}$. We claim that $(s_1s_2)^3\neq (s_2s_1)^3 \mod PT_n^{'}$, that is, $(s_1s_2)^6\not\in PT_n^{'}$. We proceed by induction on $n$. Note that $PT_5$ is the free group of rank 31 with $(s_1s_2)^3$ lying in its free generating set \cite[Section 5]{MR4170471}. Since squares of free generators do not lie in the commutator subgroup, it follows that $(s_1s_2)^6\not\in PT_5^{'}$, which proves the base case of the induction. Now, assume that $(s_1s_2)^6\not\in PT_{n-1}^{'}$ for some $n>5$. Since $PT_n=U_n \rtimes PT_{n-1}$ and $(s_1s_2)^6\in PT_{n-1}$, it follows from Lemma \ref{TL1} that $(s_1s_2)^6 \not\in PT_n^{'}$, which completes the proof.
\par

Finally, for $n \ge 4$, the order two element $s_1s_3\in T_n$ gives an order two element in  $T_n/PT_n^{'}$, and therefore $T_n /PT_n^{'}$ is not a  Bieberbach group. 
\end{proof}
\medskip

Next, we consider another crystallographic quotient of $T_n$ arising from its abelianisation exact sequence. For $1 \le i \le n -2$ and  $1\leq p < j  \le n -2$, we set
$$ \beta_0(i)=s_{i+1}s_{i}s_{i+1}s_{i}$$
and 
$$\beta_p(j)=  s_{j-p}\cdots s_{j-1}(s_{j+1}s_{j}s_{j+1}s_{j})s_{j-1}\cdots s_{j-p}.$$
A delicate use of the Reidemeister-Schreier method gives the following result \cite[Theorem 1.1]{MR3943376}.

\begin{theorem}
For each $n \geq 3$, $T_n^{'}$ has a finite presentation with generating set 
$$\{ \beta_p(j) ~\mid~ 0 \leq p<j \leq n-2 \}$$ 
and defining relations
$$\beta_{j-k}(j) \beta_{t-(j+l)}(t)=\beta_{t-(j+l)}(t) \beta_{j-k}(j) \quad \textrm{and} \quad \beta_{t-k}(t)=\beta_{j-k}(j)^{-1} \beta_{t-(j+1)}(t) \beta_{j-k}(j)$$
for $3 \le l$, $1 \leq k \leq j$ and $j+2 \leq t \leq n-2$.
\end{theorem}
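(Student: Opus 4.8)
The plan is to run the Reidemeister--Schreier procedure on the pair $T_n^{'} \le T_n$. First I would fix a transversal: since the abelianisation of $T_n$ is $\mathbb{Z}_2^{n-1}$, generated by the images of $s_1,\dots,s_{n-1}$, the index $[T_n : T_n^{'}]$ equals $2^{n-1}$, and the set of ``increasing words''
$$\mathcal{T} \;=\; \bigl\{\, s_{i_1}s_{i_2}\cdots s_{i_k} ~\bigm|~ 1 \le i_1 < i_2 < \cdots < i_k \le n-1,\ k \ge 0 \,\bigr\}$$
is a Schreier transversal for $T_n^{'}$: it is prefix-closed, and two of its elements lie in the same coset only when they determine the same subset of $\{1,\dots,n-1\}$ under $T_n \twoheadrightarrow \mathbb{Z}_2^{n-1}$. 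The Reidemeister--Schreier theorem then gives the generating set $\{\,\gamma(g,s_i) = g\,s_i\,\overline{g s_i}^{\,-1} : g \in \mathcal{T},\ 1 \le i \le n-1\,\}$ for $T_n^{'}$, where $\overline{w}$ denotes the representative of $w T_n^{'}$, together with a complete set of relators obtained by Reidemeister-rewriting the words $g\,r\,g^{-1}$ as $g$ runs over $\mathcal{T}$ and $r$ over the defining relators $s_i^{2}$ and $s_i s_j s_i s_j$ ($|i-j|\ge 2$) of $T_n$.

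Next I would pin down the essential generators. A generator $\gamma(g,s_i)$ is trivial exactly when $s_i$ can be appended to $g$ keeping the word increasing, that is, when $i$ exceeds every index in $g$; otherwise $s_i$ must be carried to its slot using only the relations $s_a s_b = s_b s_a$ ($|a-b|\ge 2$) and $s_a^2=1$, and the only obstructions to full cancellation are the passages of $s_i$ across $s_{i-1}$ or $s_{i+1}$. Tracking these obstructions --- organised as an induction on the length of $g$ --- shows that every nontrivial $\gamma(g,s_i)$ equals, modulo the relators coming from the $s_i^2$ and up to inversion, a conjugate ${}^{u}\beta_0(j)$ of a basic commutator $\beta_0(j) = s_{j+1}s_j s_{j+1}s_j$ by a transversal element $u$, and that $u$ can be normalised by the commutation relations to a block $s_{j-p}\cdots s_{j-1}$ of consecutive generators lying to the left of index $j$ --- which is exactly the conjugator in $\beta_p(j)$. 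The admissible ranges $0 \le p < j \le n-2$ are forced by ``$s_{j+1}$ must exist'' and ``the block stays inside $\{s_1,\dots,s_{j-1}\}$''. Hence $\{\beta_p(j) : 0 \le p < j \le n-2\}$ already generates $T_n^{'}$, and all remaining Schreier generators are Tietze-eliminable in terms of it.

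Then I would harvest the relations by rewriting the conjugated relators. Each $g\,s_i^{2}\,g^{-1}$ rewrites to a word in the $\gamma$-symbols already spent normalising the generators, so it is vacuous after those eliminations. Each $g\,(s_i s_j s_i s_j)\,g^{-1}$ with $|i-j|\ge 2$, once rewritten and re-expressed in the $\beta_p(j)$, splits into two regimes according to how the index blocks of the basic commutators involved sit relative to one another: when the blocks are far apart the relator says two of the $\beta$'s commute, and collecting these over all $g$ produces the family $\beta_{j-k}(j)\,\beta_{t-(j+l)}(t) = \beta_{t-(j+l)}(t)\,\beta_{j-k}(j)$ for $l \ge 3$; when the blocks abut, the relator instead rewrites a conjugate of one $\beta$ by another as a third $\beta$, producing $\beta_{t-k}(t) = \beta_{j-k}(j)^{-1}\,\beta_{t-(j+1)}(t)\,\beta_{j-k}(j)$. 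The side conditions $1 \le k \le j$ and $j+2 \le t \le n-2$ are precisely the admissibility bounds for the indices that can appear. A final round of Tietze transformations removes the redundant generators and relators, leaving the stated finite presentation.

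The main obstacle is the combinatorial bookkeeping rather than any isolated conceptual step: the transversal $\mathcal{T}$ has exponential size $2^{n-1}$, so one must recognise and prove the stability of the pattern of nontrivial Schreier generators and of surviving relators --- I would formalise this by induction on $n$ (adjoining $s_{n-1}$ and analysing what it contributes) or on transversal-word length. One also has to check carefully that several distinct pairs $(g,s_i)$ can yield the same $\beta_p(j)$, that the involution relators genuinely add nothing new, and that the overlap analysis produces exactly the two displayed families with no spurious relations. A good consistency check on the bookkeeping is the low-rank behaviour: for $n=3$ one gets a single generator $\beta_0(1)$ and no relations, so $T_3^{'}\cong\mathbb{Z}$, matching $D_\infty^{'}\cong\mathbb{Z}$; for $n=4$ one gets three generators $\beta_0(1),\beta_0(2),\beta_1(2)$ and no relations, so $T_4^{'}\cong F_3$, matching the Euler-characteristic count for $T_4\cong(\mathbb{Z}_2\times\mathbb{Z}_2)*\mathbb{Z}_2$.
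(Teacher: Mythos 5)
This theorem is not proved in the paper at all: it is imported verbatim from Dey--Gongopadhyay \cite[Theorem 1.1]{MR3943376}, with the paper only remarking that it follows from ``a delicate use of the Reidemeister--Schreier method.'' Your proposal is exactly that method --- the prefix-closed transversal of increasing words, the reduction of the Schreier generators to conjugates $\beta_p(j)$ of the basic commutators, and the harvesting of the two relation families from the rewritten relators $g s_i^2 g^{-1}$ and $g(s_is_j)^2g^{-1}$ --- so it matches the cited source's strategy rather than offering a different route; your low-rank checks ($T_3'\cong\mathbb{Z}$, $T_4'\cong F_3$) are also correct. One small simplification you could exploit: because transversal words are increasing, a letter $s_i$ being pushed into place only ever crosses letters of strictly larger index, so the sole obstruction is $s_{i+1}$ (never $s_{i-1}$), which makes the claim that each nontrivial Schreier generator is a single conjugate $g_1\beta_0(i)^{\pm1}g_1^{-1}$ essentially immediate; as written, your argument is a sound plan whose remaining content is the bookkeeping you acknowledge, not a missing idea.
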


For $m, t \in\{1,2,  \ldots, n-2\}$ with $1 \leq m \leq t-2$, we can express $\beta_{t-m}(t)$ as the conjugate of $\beta_{t-(m+1)}(t)$ by $\beta_0(m)$. Iterating the process, we can express $\beta_{t-m}(t)$ as the conjugate of $\beta_1(t)$ by the element $\beta_0(t-2)\cdots \beta_0(m)$. Thus, one obtains the following proposition \cite[Lemma 5.3 and Corollary 1.3]{MR3943376}.

\begin{proposition}
For each $n \geq 3$, $T_n^{'}$ has a finite presentation with $(2n-5)$ generators $$\{ \beta_0(1), ~\beta_0(j), ~\beta_1(j) ~\mid~ 2 \leq j \leq n-2\},$$ and  $T_n^{'}/T_n^{''}  \cong \mathbb{Z}^{2n-5}.$
\end{proposition}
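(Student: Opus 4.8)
The plan is to obtain both assertions from the finite presentation of $T_n^{'}$ recorded in the preceding theorem by Tietze transformations, the essential tool being the conjugation identities displayed just above. Specialising the second family of defining relations to $k=j$ (and renaming the running index) gives
$$\beta_{t-m}(t) = \beta_0(m)^{-1}\beta_{t-(m+1)}(t)\beta_0(m) \qquad (1 \le m \le t-2),$$
and iterating this identity rewrites, for each fixed $t$, every generator $\beta_p(t)$ with $p \ge 2$ as an explicit word in $\beta_1(t)$ and the generators $\beta_0(t-2), \ldots, \beta_0(t-p)$.

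I would first handle the generating set. For each $t$ with $3 \le t \le n-2$, delete the generators $\beta_2(t), \beta_3(t), \ldots, \beta_{t-1}(t)$ one at a time, in increasing order of the first index, each deletion being carried out by the relation $\beta_p(t) = \beta_0(t-p)^{-1}\beta_{p-1}(t)\beta_0(t-p)$: when $\beta_p(t)$ is deleted, the generator $\beta_0(t-p)$ is retained and $\beta_{p-1}(t)$ has either been retained (when $p=2$) or already been rewritten in terms of retained generators, so each step is a legitimate Tietze move. Doing this for all such $t$ leaves exactly the generators $\beta_0(1)$ together with $\beta_0(j)$ and $\beta_1(j)$ for $2 \le j \le n-2$, and a count gives $\binom{n-1}{2} - \binom{n-3}{2} = 2n-5$ of them. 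Since the surviving relations — those of the first family, together with those of the second family having $k < j$, all rewritten via the substitutions — are finite in number, we obtain a finite presentation of $T_n^{'}$ on the asserted generating set.

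For the abelianisation I would abelianise the original presentation. The first family of relations consists of commutators and hence is trivial in $T_n^{'}/T_n^{''}$. The second family $\beta_{t-k}(t) = \beta_{j-k}(j)^{-1}\beta_{t-(j+1)}(t)\beta_{j-k}(j)$ abelianises to $\beta_{t-k}(t) = \beta_{t-(j+1)}(t)$; fixing $t$ with $3 \le t \le n-2$, taking $j=t-2$, and letting $k$ run over $1,\ldots,t-2$ gives $\beta_{t-1}(t) = \beta_{t-2}(t) = \cdots = \beta_1(t)$, so that $\beta_p(t)$ equals $\beta_1(t)$ in $T_n^{'}/T_n^{''}$ for all $1 \le p \le t-1$. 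On the other hand, in every relation of the second family the generators $\beta_{t-k}(t)$ and $\beta_{t-(j+1)}(t)$ have first index at least $1$ (as $k \ge 1$ and $j \le t-2$), and the conjugating factor $\beta_{j-k}(j)$ cancels upon abelianisation, so no generator $\beta_0(j)$ is constrained by any relation. Therefore $T_n^{'}/T_n^{''}$ is free abelian on the classes of $\beta_0(j)$ for $1 \le j \le n-2$ and $\beta_1(j)$ for $2 \le j \le n-2$, i.e.\ $T_n^{'}/T_n^{''} \cong \mathbb{Z}^{2n-5}$; this also confirms that the $2n-5$ generators found above are independent in the abelianisation.

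The step requiring the most care is the Tietze reduction: one must fix the order of elimination (increasing first index, independently for each $t$) so that the relation invoked to delete each generator involves only generators that are retained or already rewritten, and then check that every surviving relation, once the substitutions are performed, is genuinely a relator in $\beta_0(1), \beta_0(j), \beta_1(j)$ alone. The substantive input, namely the finite presentation of $T_n^{'}$ that we start from, is supplied by the preceding theorem, so the remaining work is essentially bookkeeping.
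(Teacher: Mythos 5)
Your proposal is correct and follows essentially the same route as the paper, which obtains the reduced generating set by iterating the conjugation relation $\beta_{t-m}(t)=\beta_0(m)^{-1}\beta_{t-(m+1)}(t)\beta_0(m)$ (the $k=j$ specialisation) and cites Dey--Gongopadhyay for the details. Your explicit Tietze bookkeeping and the abelianisation computation (observing that the second family of relations abelianises to $\beta_{t-k}(t)=\beta_{t-(j+1)}(t)$, leaving the $\beta_0(j)$ unconstrained) correctly fill in what the paper leaves to the cited reference.
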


We set a presentation for the abelianisation 
$$T_n/T_n^{'}\cong \mathbb{Z}_2^{n-1}=\langle a_1,a_2,\ldots,a_{n-1} ~\mid~ [a_i,a_j]=1 \text{ for all } i,j \rangle.$$ 
We can identify the precise holonomy representation of $T_n/T_n^{''}$.  Modulo $T_n^{''}$, the action of the generators of $\mathbb Z_2^{n-1}$ on the generators $\beta_p(j)$ $(p=0,1)$  of $T_n^{'}$ is given by
\begin{equation}
\begin{aligned}
\Theta(a_k)(\beta_0(j))&=\begin{cases} \beta_0(j) &\text { if } j\neq k-2, k-1, k, k+1, \\
\beta_0(j+1)\beta_0(j)\beta_1(j+1)^{-1} &\text { if } j = k-2, \\
\beta_0(j)^{-1} &\text { if } j = k-1, k, \\
\beta_1(j) &\text { if } j = k+1, 
\end{cases} \\
\Theta(a_k)(\beta_1(j))&= \begin{cases} \beta_1(j) &\text { if } j\neq k-2, k-1,k,k+1, \\
\beta_0(j+1)\beta_1(j)\beta_1(j+1)^{-1} &\text { if } j = k - 2, \\
\beta_1(j)^{-1} &\text { if } j = k-1, k, \\
\beta_0(j) &\text { if } j = k + 1. 
\end{cases}
\end{aligned}
\end{equation}

\begin{theorem}\label{tn mod tn' cryst}
For $n\geq 3$, there is a short exact sequence 
$$1\to T_n^{'}/T_n^{''} \to T_n/T_n^{''} \to  \mathbb{Z}_2^{n-1} \to 1$$
such that $T_n/T_n^{''}$ is a crystallographic group of dimension $2n-5$. Furthermore, $T_n/T_n^{''}$ is not a Bieberbach group and admits only 2-torsion.
\end{theorem}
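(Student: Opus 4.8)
The plan is to verify the three conditions in the crystallographic-group criterion (the cited Lemma) for the short exact sequence $1\to T_n^{'}/T_n^{''}\to T_n/T_n^{''}\to \mathbb{Z}_2^{n-1}\to 1$, then handle the torsion statements. First I would note that $T_n^{'}/T_n^{''}\cong\mathbb{Z}^{2n-5}$ is free abelian of the stated rank by the Proposition immediately preceding, and that $\mathbb{Z}_2^{n-1}$ is finite; so the sequence has the required shape with $n$ replaced by $2n-5$ and $H=\mathbb{Z}_2^{n-1}$. The one substantive point is faithfulness of the induced holonomy representation $\Theta:\mathbb{Z}_2^{n-1}\to\Aut(\mathbb{Z}^{2n-5})$.

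For faithfulness, I would use the explicit action formulas displayed just above the theorem. Since $\mathbb{Z}_2^{n-1}$ is an elementary abelian $2$-group, a homomorphism out of it is injective iff it is nontrivial on each of a set of coordinate generators that detect the subgroup lattice — concretely, it suffices to show $\Theta(a_k)\neq\mathrm{id}$ for every $k$ and, more carefully, that no nontrivial product $a_{k_1}\cdots a_{k_r}$ lies in $\ker\Theta$. The cleanest route: for each $k$ with $1\le k\le n-1$ exhibit a generator $\beta$ among $\{\beta_0(1),\beta_0(j),\beta_1(j):2\le j\le n-2\}$ on which $\Theta(a_k)$ acts by inversion (the ``$j=k-1,k$'' cases give $\beta_0(j)\mapsto\beta_0(j)^{-1}$ and $\beta_1(j)\mapsto\beta_1(j)^{-1}$), and then argue that for a product of several $a_k$'s one can still find a generator sent to a nontrivial (indeed, a ``first nonzero coordinate flips sign'') element, using that the off-diagonal terms $\beta_0(j+1)\beta_0(j)\beta_1(j+1)^{-1}$ only involve indices $j+1>j$. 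This is exactly the triangularity trick already used in the proof of Proposition~\ref{racg commutator}: order the generators of $T_n^{'}/T_n^{''}$ by the index $j$, observe that each $\Theta(a_k)$ is ``unitriangular up to sign'' with respect to this order, with diagonal entries $\pm1$, and that $\Theta(a_k)$ has a $-1$ on the diagonal in the $j=k$ (or $j=k-1$) slot; then for a nonempty product the largest relevant index still carries a $-1$, so $\ker\Theta$ is trivial. I expect this bookkeeping — pinning down precisely which diagonal signs survive in an arbitrary product $\Theta(a_{k_1})\cdots\Theta(a_{k_r})$ given the overlapping ranges $j\in\{k-2,k-1,k,k+1\}$ — to be the main obstacle; everything else is formal.

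Finally, for the torsion claims: $T_n/T_n^{''}$ is not a Bieberbach group because the involution $s_1s_3\in T_n$ (for $n\ge 4$; for $n=3$ one uses $s_1$ itself, or notes $2n-5=1$ and $T_3/T_3^{''}$ is visibly not torsion free since $s_1$ survives) maps to a nontrivial element of order $2$ in $T_n/T_n^{''}$, as $s_1s_3\notin T_n^{''}\subseteq T_n^{'}$ (it is not even in $T_n^{'}$, having odd length / nontrivial abelianisation). For the assertion that the only torsion is $2$-torsion, I would invoke the general fact that in a crystallographic group $G$ fitting in $0\to\mathbb{Z}^m\to G\to H\to 1$ the order of any torsion element divides $|H|$ (its image in $H$ has finite order and its image in $\mathbb{Z}^m$ must be trivial since $\mathbb{Z}^m$ is torsion free, after averaging the fixed point): here $H=\mathbb{Z}_2^{n-1}$ has exponent $2$, so every torsion element of $T_n/T_n^{''}$ has order dividing $2$. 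This completes the proof.
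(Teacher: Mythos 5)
Your overall strategy for faithfulness --- order the generators of $T_n^{'}/T_n^{''}$ by the index $j$, observe that each $\Theta(a_k)$ is block lower triangular with $2\times 2$ (and one $1\times 1$) diagonal blocks, and reduce to the diagonal blocks of the product --- is sound and genuinely different from the paper's argument, which instead fixes a normal form $a_{i_1}\cdots a_{i_k}$ for each nontrivial element of $\mathbb{Z}_2^{n-1}$ and, through five explicit sub-cases of word computations in $T_n$, exhibits a generator $\beta_p(j)$ carried to the inverse of a generator or to a different generator. Your reduction is cleaner in principle: since the diagonal blocks of a product of block-triangular matrices are the products of the diagonal blocks, and for a subset $S\subseteq\{1,\dots,n-1\}$ the block at index $j$ is $(-I)^{|S\cap\{j,j+1\}|}P^{[\,j-1\in S\,]}$ (with $P$ the swap of $\beta_0(j),\beta_1(j)$), one only has to find a single index $j$ where this is not the identity.

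However, as written there is a genuine gap, and the specific mechanism you propose for closing it is false in one family of cases. Your claim that ``for a nonempty product the largest relevant index still carries a $-1$'' fails when $\{n-2,n-1\}\subseteq S$ and $n-3\notin S$: the block at the largest index $j=n-2$ is $(-I)(-I)=I$, the two signs cancel, and no swap appears. The argument is rescued by descending one step: in that case the block at $j=n-3$ equals $-P^{[\,n-4\in S\,]}\neq I$ (for $n\geq 4$). With that third case added --- (i) $\max S\leq n-2$ gives block $-P^{\epsilon}$ at $j=\max S$; (ii) $\max S=n-1$ with $n-2\notin S$ or $n-3\in S$ gives a nontrivial block at $j=n-2$; (iii) the remaining case handled at $j=n-3$ --- your proof is complete and considerably shorter than the paper's. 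Note also that no argument can work for $n=3$, where $\Theta:\mathbb{Z}_2^{2}\to\Aut(\mathbb{Z})\cong\mathbb{Z}_2$ cannot be injective (indeed $\Theta(a_1a_2)=\mathrm{id}$ there); this defect is shared with the paper's proof, and $T_3/T_3^{''}\cong D_\infty$ is crystallographic of dimension $1$ only via a different extension. Your treatment of the torsion statements (order of a torsion element divides the exponent of $\mathbb{Z}_2^{n-1}$ since $T_n^{'}/T_n^{''}$ is torsion free; $s_i$ or $s_1s_3$ survives as a $2$-torsion element) is correct and matches the paper.
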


\begin{proof}
Since $T_n^{''}$ is normal in $T_n$, the abelianisation sequence  gives the desired short exact sequence
$$1\to T_n^{'}/T_n^{''} \to T_n/T_n^{''} \to  \mathbb{Z}_2^{n-1} \to 1.$$
Each element of $\mathbb{Z}_2^{n-1}$ can be written uniquely in the form $a_{i_{k-1}}a_{i_1}a_{i_2}\cdots a_{i_k}$, where $i_1<i_2<\cdots < i_k$. We claim that the map $\Theta(a_{i_{k-1}}a_{i_1}a_{i_2}\cdots a_{i_k})$ is not the identity. We proceed as per the following cases:
\par

Case 1. If $i_k\leq n-2$, then we have
$$	\begin{aligned}
	     \Theta(a_{i_{k-1}}a_{i_1}a_{i_2}\cdots a_{i_k})(\beta_0(i_{k}))&=s_{i_{k-1}} s_{i_1}\cdots s_{i_k}\left(s_{i_{k}+1}s_{i_{k}}s_{i_{k}+1}s_{i_{k}}\right)s_{i_k}\cdots s_{i_2} s_{i_1}s_{i_{k-1}}\\
	     &=s_{i_{k-1}} s_{i_k} s_{i_{k}+1}s_{i_{k}}s_{i_{k}+1}s_{i_{k-1}}\\
	     &=\begin{cases} \beta_0(i_{k})^{-1} & \text { if } i_{k}-1 \neq i_{k-1}, \\
	      \beta_1(i_{k})^{-1} & \text { if } i_{k}-1 = i_{k-1}. \end{cases}
\end{aligned}	    
$$
\par

Case 2. Suppose that $i_k = n-1$. If $i_{k-2} < n-4$ and $i_{k-1}>n-5$, then we have
\begin{eqnarray*}
& & \Theta(a_{i_{k-1}}a_{i_1}a_{i_2}\cdots a_{i_{k-2}} a_{n-1})\left(\beta_1(n-2)\beta_0(n-3)^{-1}\beta_0(n-2)^{-1}\right) \\
&=& s_{i_{k-1}}\left(s_{n-3}s_{n-2}s_{n-3}s_{n-2}\right)s_{i_{k-1}}\\
&=& \begin{cases}  
	     s_{n-2}s_{n-3}s_{n-2}s_{n-3} & \text { if } i_{k-1}=n-2, \\	     	 
	     s_{n-2}s_{n-3}s_{n-2}s_{n-3} & \text { if } i_{k-1}=n-3, \\
	     s_{n-4}\left(s_{n-3}s_{n-2}s_{n-3}s_{n-2}\right)s_{n-4}	& \text { if } i_{k-1}=n-4, 
	      \end{cases}\\
&=& \begin{cases}  
	      \beta_0(n-3)  & \text { if } i_{k-1}=n-2, \\
	     \beta_0(n-3)	& \text { if } i_{k-1}=n-3, \\
	     \beta_1(n-3)^{-1}	& \text { if } i_{k-1}=n-4. 
	      \end{cases}
\end{eqnarray*}	    
\par
If $i_{k-2} < n-4$  and $i_{k-1}\leq n-5$, then we have
\begin{eqnarray*}
\Theta(a_{i_{k-1}}a_{i_1}a_{i_2}\cdots a_{i_{k-2}} a_{n-1})\left(\beta_0(n-2)\right)
= s_{i_{k-1}}\left(s_{n-2}s_{n-1}s_{n-2}s_{n-1}\right)s_{i_{k-1}}
= \left(\beta_0(n-2)\right)^{-1}.
\end{eqnarray*}
\par

If $i_{k-2} = n-4$  and $i_{k-1}=n-3$, then
$$ \Theta(a_{n-3}a_{i_1}a_{i_2}\cdots a_{n-4} a_{n-1})\left(\beta_0(n-2) \right) = s_{n-3}\left(s_{n-2}s_{n-1}s_{n-2}s_{n-1}\right) s_{n-3}=\beta_1(n-2)^{-1}.$$
\par

If $i_{k-2} = n-4$  and $i_{k-1}=n-2$, then
\begin{eqnarray*}
&& \Theta(a_{n-2}a_{i_1}a_{i_2}\cdots a_{n-4} a_{n-1})\left(\beta_1(n-2)\beta_0(n-3)^{-1}\beta_0(n-2)^{-1}\right)\\ 
&=& \Theta(a_{n-2}a_{i_1}a_{i_2}\cdots a_{n-4} a_{n-1})\left(s_{n-1}\left(s_{n-3}s_{n-2}s_{n-3}s_{n-2}\right) s_{n-1} \right) \\
&= & s_{n-2}s_{i_1}s_{i_2}\cdots s_{n-4} s_{n-1}\left(s_{n-1}\left(s_{n-3}s_{n-2}s_{n-3}s_{n-2}\right) s_{n-1} \right)s_{n-1}s_{n-4}\cdots s_{i_1}s_{n-2}\\
&= & s_{i_1}s_{i_2}\cdots s_{n-4} s_{n-2}s_{n-1}\left(s_{n-1}\left(s_{n-3}s_{n-2}s_{n-3}s_{n-2}\right) s_{n-1} \right)s_{n-1}s_{n-2}s_{n-4}\cdots s_{i_1}\\
&= & s_{i_1}s_{i_2}\cdots s_{n-4} \left(s_{n-2}s_{n-3}s_{n-2}s_{n-3}\right)s_{n-4}\cdots s_{i_1}\\
&= & s_{n-3-p}\cdots s_{n-5}s_{n-4} \left(s_{n-2}s_{n-3}s_{n-2}s_{n-3}\right)s_{n-4}s_{n-5}\cdots s_{n-3-p} \quad \textrm{for some} \quad 1 \le p < n-3\\
&= & \beta_p(n-3)\\
&= & \beta_1(n-3) \mod T_n^{''}.
\end{eqnarray*}	    
\par

If $i_{k-2} = n-3$  and $i_{k-1}=n-2$, then
\begin{eqnarray*}
&& \Theta(a_{n-2}a_{i_1}a_{i_2}\cdots a_{i_{k-3}}a_{n-3} a_{n-1})\left(\beta_1(n-2)\beta_0(n-3)^{-1}\beta_0(n-2)^{-1}\right)\\ 
&=& \Theta(a_{n-2}a_{i_1}a_{i_2}\cdots a_{i_{k-3}}a_{n-3} a_{n-1})\left(s_{n-1}\left(s_{n-3}s_{n-2}s_{n-3}s_{n-2}\right) s_{n-1} \right)\\
&=& s_{n-2}s_{i_1}s_{i_2}\cdots s_{i_{k-3}}s_{n-3} s_{n-1}\left(s_{n-1}\left(s_{n-3}s_{n-2}s_{n-3}s_{n-2}\right) s_{n-1} \right)s_{n-1}s_{n-3}s_{i_{k-3}}\cdots s_{i_1}s_{n-2}\\
&=& s_{n-2}s_{i_1}s_{i_2}\cdots s_{i_{k-3}} \left(s_{n-2}s_{n-3}s_{n-2}s_{n-3}\right)s_{i_{k-3}}\cdots s_{i_1}s_{n-2}\\
&=& s_{i_1}s_{i_2}\cdots s_{i_{k-3}} \left(s_{n-3}s_{n-2}s_{n-3} s_{n-2}\right)s_{i_{k-3}}\cdots s_{i_1}\\
&=&  \beta_p(n-3)^{-1}  \quad \textrm{for some} \quad 1 \le p < n-3\\
&=&  \beta_1(n-3)^{-1} \mod T_n^{''}.
\end{eqnarray*}	    
In all cases, minimality and linear independence of the generating set of $T_n^{'}/T_n^{''}$ shows that $\Theta(a_{i_{k-1}}a_{i_1}a_{i_2}\cdots a_{i_k})$ is not the identity, which proves the claim. 
\par
For the second assertion, since the generators $s_i$ give 2-torsion in  $T_n/T_n^{''}$, it is not Bieberbach. Finally, since $T_n^{'}/T_n^{''}$ is torsion free, it follows that $T_n/T_n^{''}$ has only 2-torsion. 
\end{proof}
\medskip

\subsection{Crystallographic quotients of triplet groups}
As discussed in the proof of Proposition \ref{csp fails for L_n}, $PL_n$ is a free group of finite rank for $n \geq 4$. In fact, one can determine the precise free rank of $PL_n$, which is also implicit in \cite{MR1386845},

\begin{proposition}\label{freeness of pln}
For $n\geq 4$, the pure triplet group $PL_n$ is isomorphic to the free group of rank $1+ n!\Large(\frac{2n-7}{6}\Large)$.
\end{proposition}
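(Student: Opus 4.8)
The plan is to compute the Euler characteristic of the Eilenberg--MacLane space $Y_n$ from the proof of Proposition \ref{csp fails for L_n} and use the fact that a finite rank free group $F_r$ has $\chi(F_r) = 1 - r$. First I would recall from \cite[Proposition 3.2(b)]{MR1386845} that $Y_n = \mathbb{R}^n \setminus \Sigma_n$ is a $K(PL_n, 1)$, where $\Sigma_n$ is the union of the codimension-two subspaces $\{x_i = x_j,\ x_k = x_l\}$ over all quadruples of pairwise distinct indices, and from \cite[Theorem 2.1]{MR1386845} that $Y_n$ is homotopy equivalent to a wedge of circles. Hence $PL_n$ is free of rank $r_n$ with $1 - r_n = \chi(Y_n)$, so it suffices to show $\chi(Y_n) = 1 - \big(1 + n!\,\tfrac{2n-7}{6}\big) = -\,n!\,\tfrac{2n-7}{6}$.

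The core computation is to evaluate $\chi(Y_n)$. Since the symmetric group $S_n$ acts freely on $Y_n$ (it permutes coordinates and the forbidden set is $S_n$-invariant, while $PL_n = \ker(L_n \to S_n)$ is index $n!$ in $L_n$), one has $\chi(Y_n) = n!\cdot \chi(Y_n/S_n)$, which already explains the factor $n!$. Alternatively, and more hands-on, I would stratify: $Y_n$ deformation retracts onto a finite complex, and one can compute $\chi(\mathbb{R}^n \setminus \Sigma_n)$ via an inclusion-exclusion / Mayer--Vietoris argument over the poset of intersections of the defining subspaces $\{x_i=x_j,\ x_k=x_l\}$, as is standard for subspace arrangement complements (the Goresky--MacPherson or Orlik--Solomon style formulas). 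Because each generating subspace has codimension $2$ and the combinatorics is that of pairs of disjoint transpositions, the intersection poset is controlled by partitions of $\{1,\dots,n\}$ into blocks, and the alternating sum collapses to the stated closed form. It may be cleanest to instead induct on $n$ using the fibration-type decomposition already present in the paper's treatment of pure twin groups, or to cite the explicit computation of $H_*(Y_n)$ from \cite{MR1386845}; since the statement says the value is ``implicit in \cite{MR1386845}'', I would extract exactly the Betti number computation there.

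The main obstacle I expect is bookkeeping the intersection poset of the arrangement $\Sigma_n$ cleanly enough that the inclusion-exclusion sum visibly telescopes to $1 + n!\,\tfrac{2n-7}{6}$: one must correctly account for how several of the codimension-two pieces $\{x_i=x_j,\ x_k=x_l\}$ can share common refinements (e.g. when index sets overlap in one element, forcing three coordinates equal), and ensure no overcounting. Once the rank is pinned down as a polynomial-times-factorial expression, matching it against $1 + n!\,\tfrac{2n-7}{6}$ is a finite check; verifying the base case $n=4$ directly against the known fact that $PL_4$ is free (and computing its rank, which should be $1 + 24\cdot\tfrac{1}{6} = 5$) gives a useful sanity check and anchors any induction.
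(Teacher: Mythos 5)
Your overall strategy --- deduce freeness from Khovanov's result that $Y_n$ is homotopy equivalent to a bouquet of circles, then pin down the rank via $\chi(F_r)=1-r$ --- is exactly the paper's, and your sanity check $1+24\cdot\tfrac{1}{6}=5$ for $n=4$ is correct. But the entire content of the proposition is the evaluation $\chi(Y_n)=-\,n!\bigl(\tfrac{2n-7}{6}\bigr)$, and your proposal does not actually carry this out: you list candidate methods (equivariance, inclusion--exclusion over the intersection poset, or citing Khovanov's Betti numbers) and assert that ``the alternating sum collapses to the stated closed form'' without performing the collapse. Moreover, the one concrete shortcut you commit to is false: $S_n$ does \emph{not} act freely on $Y_n$. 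A point such as $(0,0,1,2,\dots,n-2)$ has exactly one coincidence $x_1=x_2$, hence is not removed (removal requires two coincidences among pairwise distinct indices), yet it is fixed by the transposition $(1\,2)$. So $\chi(Y_n)=n!\,\chi(Y_n/S_n)$ is unjustified; and even if it held it would only explain the factor $n!$, leaving $\chi(Y_n/S_n)$ uncomputed.

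The paper fills this gap with an explicit cell structure rather than the intersection poset. By the Nerve Theorem, $Y_n$ is homotopy equivalent to the $2$-complex $X_\Gamma$ obtained from the $1$-skeleton $\Gamma$ of the permutahedron $\Pi_{n-1}$ by attaching a $2$-cell along each of its hexagonal faces ($6$-cycles); $\Gamma$ has $n!$ vertices and $\tfrac{n!(n-1)}{2}$ edges, and there are $\tfrac{n!(n-2)}{6}$ hexagons, whence
$$\chi(Y_n)=n!-\frac{n!(n-1)}{2}+\frac{n!(n-2)}{6}=-\,n!\left(\frac{2n-7}{6}\right),$$
and the rank follows (the paper phrases this by collapsing each $2$-cell together with a marked edge onto the rest of its boundary hexagon, producing a graph with $n!$ vertices and $\tfrac{n!(2n-1)}{6}$ edges). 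If you want to keep your Goresky--MacPherson route you must actually run the inclusion--exclusion over the poset of intersections of the subspaces $\{x_i=x_j,\ x_k=x_l\}$; the overlapping-index strata you flag as a worry (three coordinates forced equal) are precisely what the hexagonal faces of $\Pi_{n-1}$ encode, and the permutahedron count sidesteps that bookkeeping entirely.
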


\begin{proof}
The permutahedron $\Pi_{n-1}$ is the convex hull of all vectors that are obtained by permuting the coordinates of the vector $(1,2,\ldots,n) \in \mathbb{R}^n$ \cite[Definition 9.6]{MR2237292}. Its vertices can be identified with the elements of $S_n$ in such a way that two vertices are connected by an edge if and only if the corresponding permutations differ by a transposition. Let $\Gamma$ be the 1-skeleton of $\Pi_{n-1}$. Note that $\Gamma$ has $n!$ vertices and $\frac{n!(n-1)}{2}$ edges. Further, there are $\frac{n! (n-2)}{6}$ 6-cycles in $\Gamma$, each of which is uniquely determined by an integer $k \in \{1, 2, \ldots,n-2 \}$ and a permutation $(i_1,i_2,\ldots,i_n)$ with $i_k<i_{k+1}<i_{k+2}$, in which the edge connecting the vertex $(i_1,i_2,\ldots,i_n)$ with the vertex $(i_1, i_2,\ldots,i_{k+1},i_{k},i_{k+2},\ldots, i_n)$ is considered as marked. Let $X_\Gamma$ be a regular 2-dimensional cell complex obtained by attaching a 2-cell along each 6-cycle of $\Gamma$. By the Nerve Theorem \cite[Theorem 10.6]{MR1373690}, the space $Y_n$  is homotopy equivalent to $X_\Gamma$. It is shown in \cite[p.269]{MR1386845} that $X_\Gamma$ is homotopy equivalent to a part of its 1-skeleton, obtained by contracting each 2-cell along with the marked 1-cell onto the remaining boundary of the corresponding 6-cycle. Thus, the resulting graph has $n!$ vertices and $\frac{n!(n-1)}{2} - \frac{n! (n-2)}{6} = \frac{n!(2n-1)}{6}$ edges. It is easy to see that the number of cycles in the resulting 1-skeleton is equal to the number of 4-cycles in $\Gamma$. Using the count of vertices and edges, the Euler characteristic of $X_\Gamma$ is $- n!\big(\frac{2n-7}{6}\big)$. On the other hand, computing the Euler characteristic of $X_\Gamma$ via homology, we conclude that $PL_n$ is the free group of rank $1+ n!\big(\frac{2n-7}{6}\big)$.
\end{proof}

The following table gives rank of $PL_n$ for small values of $n$.
\begin{center}
\begin{tabular}{|c|c|c|c|c|c|}
\hline
$n$ & 3 & 4 & 5 & 6 & 7\\
\hline
rank $(PL_n)$ & 0 & 5 & 61 & 601 & 5881\\
\hline
\end{tabular}
\end{center}

In fact, we can give an explicit free generating set for $PL_4$.

\begin{lemma}\label{pln freeness}
$PL_4 \cong F_5$ and is freely generated by the following elements:
\begin{eqnarray*}
(y_1 y_3)^2 , &~& y_2(y_1 y_3)^2y_2,\\
~ y_1 y_2(y_1 y_3)^2y_2 y_1, &~& y_3y_2(y_1 y_3)^2y_2y_3,\\
~y_1 y_3 y_2(y_1 y_3)^2y_2 y_3 y_1. &~& 
\end{eqnarray*}
\end{lemma}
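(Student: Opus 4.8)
The plan is to use the fact, established earlier in the excerpt (Proposition \ref{ln2=pln}), that $PL_4 = L_4[2] = \ker(\pi)$ where $\pi \colon L_4 \to S_4$, together with the fact (Proposition \ref{freeness of pln}) that $PL_4$ is free of rank $5$. So the only real task is to exhibit five specific elements of $PL_4$ that generate it; freeness of the generated subgroup and the fact that it exhausts $PL_4$ will follow from a rank count if the generating set has exactly $5$ elements and we can show it generates.

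First I would verify that each of the five listed elements actually lies in $PL_4$, i.e. maps to the identity under $\pi \colon L_4 \to S_4$. Writing $\pi(y_i) = \tau_i = (i, i+1)$, one checks directly that $(y_1 y_3)^2 \mapsto (\tau_1\tau_3)^2 = \mathrm{id}$ (since $\tau_1$ and $\tau_3$ commute and are involutions), and the remaining four elements are conjugates of $(y_1 y_3)^2$ by words in the $y_i$, hence also land in the normal subgroup $PL_4$. So all five elements are genuinely in $PL_4$.

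The substantive step is to run the Reidemeister--Schreier algorithm for the finite-index subgroup $PL_4 \le L_4$ with index $|S_4| = 24$. I would fix a Schreier transversal for $PL_4$ in $L_4$ — most naturally a set of $24$ reduced words in $y_1, y_2, y_3$ mapping bijectively onto $S_4$, chosen by a shortlex/ShortLex rule so that the Schreier property holds — and then form the Reidemeister--Schreier generators $\overline{t\,y_i}\,(t y_i \overline{t y_i}^{-1})$ as $t$ ranges over the transversal and $i$ over $\{1,2,3\}$. After discarding the trivial generators and applying the Schreier rewriting to the three families of defining relators of $L_4$ (the relators $y_j^2$, $(y_1y_2)^3$, $(y_2y_3)^3$), one obtains a presentation of $PL_4$. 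The relators coming from $y_j^2$ and the braid relators will, after Tietze simplification, eliminate all but five of the Reidemeister--Schreier generators, and the surviving relators become trivial, confirming $PL_4 \cong F_5$ and producing an explicit free basis. The final bookkeeping step is to rewrite that computed basis back into the ambient letters $y_1, y_2, y_3$ and check it coincides (up to the obvious Nielsen moves — inversion, cyclic permutation of conjugating words, and replacing a basis element by its conjugate by another) with the five displayed words; since the displayed elements are manifestly a set of $5$ elements of the free group $PL_4$ which, by the computation, generate it, a free group of rank $5$ cannot be generated by fewer than $5$ elements, so they form a free basis.

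The main obstacle is the size and care of the Reidemeister--Schreier bookkeeping: with a transversal of size $24$ and three generators there are $72$ candidate generators and $24 \times (3 + 2) = 120$ relator-instances to rewrite, and one must track the Schreier rewriting consistently and carry out the Tietze reductions without error. A secondary subtlety is the choice of transversal — a poor choice inflates the intermediate presentation enormously — so I would pick the transversal adapted to the Coxeter word structure of $S_4$ (e.g. via the standard normal form $y_{i_1}\cdots$ for coset representatives) to keep the conjugating subwords short, which is exactly what makes the five displayed conjugators of $(y_1y_3)^2$ appear. Everything else (checking membership in $PL_4$, concluding freeness from the rank count) is routine once the generating claim is in hand.
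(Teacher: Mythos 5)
Your proposal is correct and follows essentially the same route as the paper: a Reidemeister--Schreier computation for the index-$24$ subgroup $PL_4 \le L_4$, using a Schreier transversal adapted to the Coxeter normal form of $S_4$ (the paper takes $\Lambda_3 \cup \Lambda_3 y_3 \cup \Lambda_3 y_3 y_2 \cup \Lambda_3 y_3 y_2 y_1$ with $\Lambda_3$ a transversal for $L_3$), followed by elimination of generators. The only difference of detail is that the paper's computation yields six nontrivial Schreier generators together with a single surviving relation $x_6 = x_2 x_4^{-1} x_1^{-1} x_5 x_3^{-1}$, which eliminates one generator and leaves the five displayed elements as a free basis --- consistent with your rank-count fallback via Proposition \ref{freeness of pln}.
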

\begin{proof}
A Schreier set of coset representatives of $PL_4$ in $L_4$ is $$\Lambda_3 \;\cup\; \Lambda_3 y_3 \;\cup \; \Lambda_3 y_3y_2 \;\cup \;\Lambda_3 y_3y_2y_1$$ where $\Lambda_3=\{1, y_1, y_2, y_1y_2, y_2y_1, y_1y_2y_1\}$. Elementary computations using the Reidemeister-Schreier method show that the following generators are non-trivial:
\begin{eqnarray*}
S_{y_1y_3, y_1}=S_{y_3, y_1}^{-1} = S_{y_1y_3y_2y_1,y_2} = S_{y_3y_2y_1,y_2}^{-1} = (y_1 y_3)^2  &:=& x_1,\\ 
S_{y_2y_1y_3, y_1}=S_{y_2y_3, y_1}^{-1} = S_{y_2y_1y_3y_2y_1 ,y_2} = S_{y_2y_3y_2y_1,y_2}^{-1} = y_2((y_1 y_3)^2)y_2 &:=& x_2,\\ 
S_{y_1y_2y_1y_3, y_1}=S_{y_1y_2y_3, y_1}^{-1} = S_{y_1y_2y_1y_3y_2y_1 , y_2} = S_{y_1y_2y_3y_2y_1 , y_2}^{-1}
 =  y_1 y_2((y_1 y_3)^2)y_2 y_1 &:=& x_3 ,\\
 S_{y_3y_2y_1,y_3} = S_{y_2y_3y_2y_1,y_3}^{-1} = y_3y_2((y_1 y_3)^2)y_2y_3 &:=& x_4 ,\\ 
 S_{y_1y_3y_2y_1,y_3} = S_{y_1y_2y_3y_2y_1 , y_3}^{-1} = y_1 y_3 y_2((y_1 y_3)^2)y_2 y_3 y_1 &:=& x_5,\\ 
 S_{y_2y_1y_3y_2y_1 ,y_3} =  S_{y_1y_2y_1y_3y_2y_1 , y_3}^{-1}  = y_2 y_1 y_3 y_2((y_1 y_3)^2)y_2 y_3 y_1 y_2 &:=& x_6.
\end{eqnarray*}
Considering the defining relations, we see that
$$
x_6 = x_2 x_4^{-1} x_1^{-1} x_5 x_3^{-1}.
$$
Hence, $PL_n$ is isomorphic to the free group of rank 5.
\end{proof}

\begin{theorem}\label{ln mod pln cryst}
For $n\geq 4$, there is a short exact sequence 
$$ 1 \longrightarrow PL_n/PL_n^{'} \longrightarrow L_n /PL_n^{'} \stackrel{\pi}{\longrightarrow} S_n \longrightarrow 1 $$
 such that $L_n /PL_n^{'}$ is a crystallographic group of dimension $1+ n!\Large(\frac{2n-7}{6}\Large)$. Furthermore, $L_n /PL_n^{'}$ is not a Bieberbach group.
\end{theorem}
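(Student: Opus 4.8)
The plan is to closely follow the argument used for $T_n/PT_n'$ in Theorem~\ref{tn mod ptn cryst}. First, since $PL_n=\ker\pi$ is normal in $L_n$, its commutator subgroup $PL_n'$ is characteristic in $PL_n$ and therefore normal in $L_n$; this gives the short exact sequence
$$1 \longrightarrow PL_n/PL_n' \longrightarrow L_n/PL_n' \stackrel{\pi}{\longrightarrow} S_n \longrightarrow 1.$$
By Proposition~\ref{freeness of pln}, $PL_n$ is free of rank $N:=1+n!\left(\frac{2n-7}{6}\right)$, so $PL_n/PL_n'\cong\mathbb{Z}^{N}$, and $S_n$ is finite. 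By the characterisation of crystallographic groups recalled at the start of Section~\ref{sec4}, it remains only to check that the holonomy representation $\Theta:S_n\to\Aut(\mathbb{Z}^{N})$ --- where $\Theta(h)$ is the automorphism of $PL_n/PL_n'$ induced by conjugation by any lift of $h$ to $L_n$ --- is faithful; the dimension is then $N$ as stated.

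For faithfulness I would argue exactly as in the twin case. The kernel of $\Theta$ is a normal subgroup of $S_n$, and every nontrivial normal subgroup of $S_n$ (namely $A_n$ and $S_n$, together with the Klein four-group when $n=4$) contains the even permutation $(1\,2)(3\,4)=\tau_1\tau_3$. So it suffices to show $\Theta(\tau_1\tau_3)\neq\id$, where $\tau_1\tau_3$ lifts to $y_1y_3\in L_n$. Put
$$x_2:=y_2(y_1y_3)^2y_2 \qquad\text{and}\qquad x_5:=y_1y_3y_2(y_1y_3)^2y_2y_3y_1.$$
Both lie in $PL_n$ since $\pi(x_2)=\pi(x_5)=1$, and because $y_1^2=y_3^2=1$ we have $x_5=(y_1y_3)\,x_2\,(y_1y_3)^{-1}$, so $\Theta(\tau_1\tau_3)$ carries the class of $x_2$ to the class of $x_5$ in $PL_n/PL_n'$. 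Thus the whole problem comes down to showing $x_2\not\equiv x_5\pmod{PL_n'}$.

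For $n=4$ this is immediate: by Lemma~\ref{pln freeness}, $x_2$ and $x_5$ are two distinct members of a free generating set of $PL_4\cong F_5$, hence linearly independent in $PL_4/PL_4'\cong\mathbb{Z}^5$, so $\Theta(\tau_1\tau_3)\neq\id$ and $L_4/PL_4'$ is crystallographic of dimension $5$. For $n\geq5$ I would set up, exactly as for $PT_n$, a split exact sequence $1\to V_n\to PL_n\to PL_{n-1}\to1$ obtained by deleting the last strand of a noodle diagram (with the obvious right-strand-addition section), and then induct: since $x_2,x_5$ lie in the standard parabolic $\langle y_1,y_2,y_3\rangle\cong L_4$, they lie in $PL_4\leq PL_{n-1}$, so $x_2x_5^{-1}\in PL_{n-1}$, and if inductively $x_2x_5^{-1}\notin PL_{n-1}'$ then Lemma~\ref{TL1} applied to $PL_n=V_n\rtimes PL_{n-1}$ gives $PL_n'\cap PL_{n-1}=PL_{n-1}'$, whence $x_2x_5^{-1}\notin PL_n'$. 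Establishing this strand-deletion splitting for triplet groups --- the analogue of $PT_n=U_n\rtimes PT_{n-1}$ --- is the step I expect to be the main obstacle, since everything else is either routine or already recorded in Proposition~\ref{freeness of pln} and Lemma~\ref{pln freeness}; if a clean splitting is unavailable one can instead carry out the Reidemeister--Schreier computation for general $n$ and display $x_2,x_5$ in a free basis.

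Finally, $L_n/PL_n'$ is not a Bieberbach group: the image of $y_1$ has order two, because $y_1^2=1$ while $y_1\notin PL_n$ (as $\pi(y_1)=\tau_1\neq1$, so $y_1\notin PL_n'$), hence $L_n/PL_n'$ has nontrivial torsion and cannot be torsion free.
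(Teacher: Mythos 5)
Your overall strategy is the paper's: reduce faithfulness of the holonomy representation $\Theta$ to showing that $\Theta(\tau_1\tau_3)$ (equivalently, the paper's $\tau_1\tau_2$ for $n\geq 5$) moves a single explicit class in $PL_n/PL_n'$, using the normal subgroup structure of $S_n$; and your treatment of the exact sequence, the dimension count via Proposition \ref{freeness of pln}, the case $n=4$ via Lemma \ref{pln freeness}, and the non-Bieberbach statement are all correct. The problem is the case $n\geq 5$, and it is exactly the step you yourself flag: you have not established the split exact sequence $1\to V_n\to PL_n\to PL_{n-1}\to 1$, and this is not a routine transplant from the twin case. The strand-deletion homomorphism $f_n:PT_n\to PT_{n-1}$ exists because of the structure of twin diagrams (equivalently, the commutation relations of $T_n$); for $L_n$ there is no commutation relation at all, and the natural candidates fail. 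The retraction of $L_n$ onto the parabolic $\langle y_1,\dots,y_{n-2}\rangle$ sending $y_{n-1}\mapsto 1$ is not a homomorphism (it kills the braid relation $y_{n-2}y_{n-1}y_{n-2}=y_{n-1}y_{n-2}y_{n-1}$ only if $y_{n-2}=1$), and the retraction $y_{n-1}\mapsto y_{n-2}$, while a well-defined homomorphism of $L_n$, does not carry $PL_n$ into $PL_{n-1}$, because there is no compatible homomorphism $S_n\to S_{n-1}$ with $\tau_{n-1}\mapsto\tau_{n-2}$ (it would have to break the commutation $\tau_{n-3}\tau_{n-1}=\tau_{n-1}\tau_{n-3}$). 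So the inductive engine of your argument, namely $PL_n'\cap PL_{n-1}=PL_{n-1}'$ via Lemma \ref{TL1}, is not available as stated, and your fallback (a Reidemeister--Schreier computation for general $n$) is not carried out.

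For comparison, the paper avoids any induction or splitting here: for $n\geq 5$ it works with $\tau_1\tau_2$ and the pair $(y_1y_3)^2$, $y_1y_2(y_1y_3)^2y_2y_1$, and argues that these two elements, being members of a free generating set of $PL_4$, are members of a free generating set of $PL_n$ (so their classes are independent in $PL_n/PL_n'\cong\mathbb{Z}^N$). That is the single fact you need in place of your splitting; in your write-up it appears only as the unproven assertion that $x_2x_5^{-1}\notin PL_n'$ follows from $x_2x_5^{-1}\notin PL_4'$. To close the gap you should either prove that the parabolic copy of $PL_4$ is a free factor of $PL_n$ (e.g.\ by identifying the corresponding subcomplex of the Eilenberg--MacLane space $Y_n$ from Proposition \ref{csp fails for L_n} and Proposition \ref{freeness of pln}), or otherwise justify directly that the images of $x_2$ and $x_5$ remain distinct in $H_1(PL_n)$ for all $n\geq 5$.
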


\begin{proof}
Note that $PL_2 = PL_3=1$. Since the commutator subgroup of $PL_n$ is normal in $L_n$, the short exact sequence 
$$ 1 \longrightarrow PL_n \longrightarrow L_n  \stackrel{\pi}{\longrightarrow} S_n \longrightarrow 1 $$
induces the desired sequence. Note that $\Theta(\tau_1\tau_2)((y_1y_3)^2)=y_1y_2\left((y_1y_3)^2\right)y_2y_1 \mod PL_n^{'}$. By Lemma \ref{pln freeness}, both $(y_1y_3)^2$ and $y_1y_2\left((y_1y_3)^2\right)y_2y_1$ lie in a free generating set for $PL_4$, and hence in a free generating set for $PL_n$. Hence, $(y_1y_3)^2\neq y_1y_2\left((y_1y_3)^2\right)y_2 y_1 \mod PL_n^{'}$ and therefore $\tau_1\tau_2 \not\in \ker(\Theta)$. But, for $n>4$, the only proper normal subgroup of $S_n$ that does not contain $\tau_1\tau_2$ is the trivial subgroup, and hence $\Theta$ is faithful for $n>4$. Similarly, for $n=4$, we have $\Theta(\tau_1\tau_3)(y_2\left((y_1y_3)^2\right)y_2)=y_1 y_3 y_2 \left(y_1y_3)^2\right)y_2y_3y_1 \mod PL_4^{'}$. The same reasoning as above shows that $\Theta$ is faithful in this case as well.
\par
Images of $y_i$ and $y_iy_{i+1}$ give order two and order three elements in $L_n /PL_n^{'}$, respectively, and hence it is not a Bieberbach group.
\end{proof}

It follows from \cite[Exercise 9, p.33]{Bourbaki} or a direct application of the Reidemeister--Schreier method that $L_n^{'}$ is a free product of $(n-2)$ copies of $\mathbb{Z}_3$, and hence the abelianisation sequence $1 \to L_n^{'} \to L_n \to \mathbb{Z}_2 \to 1$ does not induce a crystallographic quotient of $L_n$.

\medskip

\begin{ack}
Pravin Kumar is supported by the PMRF fellowship at IISER Mohali. He also thanks NISER Bhubaneswar for the warm hospitality during his visit, where a part of this project was carried out. Mahender Singh is supported by the SwarnaJayanti Fellowship grants DST/SJF/MSA-02/2018-19 and SB/SJF/2019-20/04.
\end{ack}

\section{Declaration}
The authors declare that they have no conflict of interest.

\bibliographystyle{plain}
\bibliography{template}

\begin{thebibliography}{1}

\bibitem{quotientbraid} J. Appel, W. Bloomquist,  K. Gravel and A. Holden, {\em On quotients of congruence subgroups of braid groups}, (2020), arXiv:2011.13876.

\bibitem{MR4093966} H. Baik, B. Petri and J. Raimbault, {\em Subgroup growth of right-angled Artin and Coxeter groups}, J. Lond. Math. Soc. (2) 101 (2020), no. 2, 556--588.

\bibitem{MR4027588} V. Bardakov, M. Singh and A.  Vesnin, {\em Structural aspects of twin and pure twin groups}, Geom. Dedicata 203 (2019), 135--154.

\bibitem{MR1373690}  A. Bj\"{o}rner, {\em Topological Methods}, Handbook of combinatorics, Vol. 1, 2, 1819--1872, Elsevier Sci. B. V., Amsterdam. 1995.

\bibitem{MR1317619} A. Bj\"{o}rner and V. Welker, {\em The homology of ``$k$-equal'' manifolds and related partition lattices}, Adv. Math. 110 (1995), 277--313.

\bibitem{BPS2022} W. Bloomquist, P. Patzt and N. Scherich, {\em Quotients of braid groups by their congruence subgroups}, (2022), arXiv:2209.09889.


\bibitem{Bourbaki} N. Bourbaki, {\em Lie Groups and Lie Algebras}, Chapters 4--6. Translated from the 1968 French Original by Andrew Pressley. Elements of Mathematics (Berlin), xii+300 pp. Springer, Berlin (2002).

\bibitem{MR4157115} T. E. Brendle, {\em Congruence subgroups of braid groups}, Winter Braids Lect. Notes 5 (2018), Winter Braids VIII (Marseille, 2018), Exp. No. 3, 26 pp.

\bibitem{MR3757477} T. E. Brendle and D. Margalit, {\em The level four braid group}, J. Reine Angew. Math. 735 (2018), 249--264.

\bibitem{MR0862114} L. S. Charlap,  {\em Bieberbach groups and flat manifolds}, Universitext. Springer-Verlag, New York, 1986. xiv+242 pp.

\bibitem{MR1626421} D. Cooper,  D. D. Long and A. W. Reid, {\em Infinite {C}oxeter groups are virtually indicable}, Proc. Edinburgh Math. Soc. (2) 41 (1998), no. 2, 303--313.

\bibitem{MR1482520} K. Dekimpe, {\em Almost-Bieberbach groups: affine and polynomial structures}, Lecture Notes in Mathematics, 1639. Springer-Verlag, Berlin, 1996. x+259 pp.

\bibitem{MR3943376} S. Dey and K. Gongopadhyay,  {\em Commutator subgroups of twin groups and Grothendieck's cartographical groups}, J. Algebra 530 (2019), 215--234.

\bibitem{MR2850125} B. Farb and D. Margalit, {\em A primer on mapping class groups}, Princeton Mathematical Series, 49. Princeton University Press, Princeton, NJ, 2012. xiv+472 pp.

\bibitem{Farley2021} D. S. Farley, \textit{The planar pure braid group is a diagram group}, (2021), arXiv:2109.02815.


\bibitem{MR0547452} R. Fenn and P. Taylor, \textit{Introducing doodles}, Topology of low-dimensional manifolds (Proc. Second Sussex Conf., Chelwood Gate, 1977), pp. 37--43, Lecture Notes in Math., 722, Springer, Berlin, 1979.

\bibitem{MR3595797} D. L. Goncalves, J. Guaschi and O. Ocampo,  {\em A quotient of the Artin braid groups related to crystallographic groups}, J. Algebra 474 (2017), 393--423.

\bibitem{MR2700693} C. Gonciulea, {\em Virtual epimorphisms of Coxeter groups onto free groups}, Thesis (Ph.D.)--The Ohio State University, 33 pp.

\bibitem{MR4170471} J. Gonz\'{a}lez, J. L. Le\'{o}n-Medina and C. Roque-M\'{a}rquez,  {\em Linear motion planning with controlled collisions and pure planar braids}, Homology Homotopy Appl. 23 (2021), 275--296.

\bibitem{Gotin} K. Gotin, \textit{Markov theorem for doodles on two-sphere}, (2018), arXiv:1807.05337.

\bibitem{MR2193190} V. S. Guba and M. V. Sapir, {\em Diagram groups and directed 2-complexes: homotopy and homology}, J. Pure Appl. Algebra 205 (2006), 1--47.

\bibitem{MR4035955}  N. L. Harshman and A. C. Knapp, \textit{Anyons from three-body hard-core interactions in one dimension}, Ann. Physics 412 (2020), 168003, 18 pp.

\bibitem{MR4440997}  N. L. Harshman and A. C. Knapp, \textit{Topological exchange statistics in one dimension},  Phys. Rev. A 105 (2022), no. 5, Paper No. 052214, 15 pp.

\bibitem{MR1370644} M. Khovanov, \textit{Doodle groups}, Trans. Amer. Math. Soc. 349 (1997), 2297--2315.

\bibitem{Khovanov1990}  M. Khovanov, \textit{New geometrical constructions in low-dimensional topology}, (1990), preprint.


\bibitem{MR1386845} M.  Khovanov,  {\em Real $K(\pi,1)$ arrangements from finite root systems}, Math. Res. Lett. 3 (1996), 261--274.

\bibitem{Mostovoy} J. Mostovoy, \textit{A presentation for the planar pure braid group}, (2020), arXiv:2006.08007.

\bibitem{MR4079623} J. Mostovoy and C. Roque-M\'arquez, \textit{Planar pure braids on six strands}, J. Knot Theory Ramifications  29  (2020), No. 01, 1950097.

\bibitem{MR4145210} T. K. Naik, N. Nanda and M. Singh, \textit{Conjugacy classes and automorphisms of twin groups},  Forum Math. 32 (2020), no. 5, 1095--1108.

\bibitem{MR4192499} T. K. Naik, N. Nanda and M. Singh,  \textit{Some remarks on twin groups},  J. Knot Theory Ramifications 29 (2020), no. 10, 2042006, 14 pp.

\bibitem{MR4270786} T. K. Naik and M. Singh, {\em  Automorphisms of odd Coxeter groups}, Monatsh. Math. 195 (2021), no. 3, 501--521. 

\bibitem{Ocampo-Santos-2021} O. Ocampo and P. C. C. Dos Santos Jr., {\em Virtual braid groups, virtual twin groups and crystallographic groups}, (2021), arXiv:2110.02392v1.

\bibitem{MR3786423} C. Stylianakis, {\em Congruence subgroups of braid groups}, Internat. J. Algebra Comput. 28 (2018), no. 2, 345--364.

\bibitem{surybook} B. Sury, {\em The congruence subgroup problem: An elementary approach aimed at applications}, Texts and Readings in Mathematics. Hindustan Book Agency, 2003.

\bibitem{MR2237292} R. R. Thomas, {\em Lectures in geometric combinatorics}, Student Mathematical Library, 33. IAS/Park City Mathematical Subseries. American Mathematical Society, Providence, RI; Institute for Advanced Study (IAS), Princeton, NJ, 2006. viii+143 pp.
\medskip

\end{thebibliography}

\medskip

\end{document}